\documentclass[12pt]{amsart}
\usepackage{latexsym,enumerate}
\usepackage{amssymb}
\usepackage[colorlinks]{hyperref}

\headheight=7pt \textheight=574pt \textwidth=432pt \topmargin=14pt
\oddsidemargin=18pt \evensidemargin=18pt

\newtheorem{theorem}{Theorem}[section]
\newtheorem{lemma}[theorem]{Lemma}
\newtheorem{proposition}[theorem]{Proposition}
\newtheorem{corollary}[theorem]{Corollary}

\theoremstyle{definition}

\theoremstyle{remark}

\numberwithin{equation}{section}

\newcommand{\GL}{{\mathrm {GL}}}
\newcommand{\PGL}{{\mathrm {PGL}}}
\newcommand{\SL}{{\mathrm {SL}}}
\newcommand{\PSL}{{\mathrm {PSL}}}

\newcommand{\PSU}{{\mathrm {PSU}}}

\newcommand{\GO}{{\mathrm {GO}}}

\newcommand{\SO}{{\mathrm {SO}}}
\newcommand{\PCO}{{\mathrm {PCO}}}

\newcommand{\Sp}{{\mathrm {Sp}}}
\newcommand{\PSp}{{\mathrm {PSp}}}
\newcommand{\PCSp}{{\mathrm {PCSp}}}

\newcommand{\Ker}{\operatorname{Ker}}
\newcommand{\Aut}{{\mathrm {Aut}}}
\newcommand{\Out}{{\mathrm {Out}}}

\newcommand{\Irr}{{\mathrm {Irr}}}
\newcommand{\IBR}{{\mathrm {IBr}}}

\newcommand{\diag}{{\mathrm {diag}}}

\newcommand{\Syl}{{\mathrm {Syl}}}

\newcommand{\Stab}{{\mathrm {Stab}}}

\newcommand{\Cl}{{\mathrm {Cl}}}

\newcommand{\ZZ}{{\mathbb Z}}
\newcommand{\NN}{{\mathbb N}}

\newcommand{\GG}{{\mathbb G}}

\newcommand{\FF}{{\mathbb F}}

\newcommand{\GC}{\mathcal{G}}

\newcommand{\ta}{\hspace{0.5mm}^{2}\hspace*{-0.2mm}}
\newcommand{\tb}{\hspace{0.5mm}^{3}\hspace*{-0.2mm}}

\newcommand{\bC}{{\mathbf{C}}}
\newcommand{\bO}{{\mathbf{O}}}
\newcommand{\bN}{{\mathbf{N}}}

\newcommand{\bZ}{{\mathbf{Z}}}
\newcommand{\Al}{\textup{\textsf{A}}}

\newcommand{\pcore}{\mathbf{O}}
\newcommand{\GammaL}{\operatorname{\Gamma L}}
\newcommand{\IBr}{\operatorname{IBr}}

\begin{document}

\title[Bounding $p$-Brauer characters]
{Bounding $p$-Brauer characters in finite groups with two conjugacy
classes of $p$-elements}

\author[N.\,N. Hung]{Nguyen Ngoc Hung}
\address{Department of Mathematics, The University of Akron, Akron,
OH 44325, USA} \email{hungnguyen@uakron.edu}

\author[B. Sambale]{Benjamin Sambale}
\address{Institut f\"{u}r Algebra, Zahlentheorie und Diskrete Mathematik,
Leibniz Universit\"{a}t Hannover, Welfengarten 1, 30167 Hannover,
Germany} \email{sambale@math.uni-hannover.de}

\author[P.\,H. Tiep]{Pham Huu Tiep}
\address{Department of Mathematics, Rutgers University, Piscataway, NJ 08854, USA}
\email{tiep@math.rutgers.edu}

\thanks{The second author is supported by the German Research
Foundation (\mbox{SA 2864/1-2} and \mbox{SA 2864/3-1}). The third author gratefully acknowledges the support of the NSF (grant
DMS-1840702), the Joshua Barlaz Chair in Mathematics, and the
Charles Simonyi Endowment at the Institute for Advanced Study
(Princeton). }
\dedicatory{Dedicated to Burkhard K\"ulshammer on the occasion of his retirement.}

\subjclass[2010]{Primary 20C20, 20C33, 20D06}

\keywords{Finite groups, Brauer characters, conjugacy classes,
Alperin weight conjecture}

\begin{abstract} Let $k(B_0)$ and $l(B_0)$ respectively denote the number of
ordinary and $p$-Brauer irreducible characters in the principal
block $B_0$ of a finite group $G$. We prove that, if
$k(B_0)-l(B_0)=1$, then $l(B_0)\geq p-1$ or else $p=11$ and
$l(B_0)=9$. This follows from a more general result that for every
finite group $G$ in which all non-trivial $p$-elements are
conjugate, $l(B_0)\geq p-1$ or else $p = 11$ and $G/\bO_{p'}(G)
\cong C^2_{11} \rtimes\SL(2,5)$. These results are useful in the
study of principal blocks with few characters.

We propose that, in every finite group $G$ of order divisible by $p$,
the number of irreducible Brauer characters in the principal
$p$-block of $G$ is always at least $2\sqrt{p-1}+1-k_p(G)$, where $k_p(G)$
is the number of conjugacy classes of $p$-elements of $G$. This
indeed is a consequence of the celebrated Alperin weight conjecture
and known results on bounding the number of $p$-regular classes in
finite groups.
\end{abstract}

\maketitle


\section{Introduction}

Let $G$ be a finite group and $p$ a prime. Bounding the number
$k(G)$ of conjugacy classes of $G$ and the number $k_{p'}(G)$ of
$p$-regular conjugacy classes of $G$ is a classical problem in group
representation theory, one important reason being that $k(G)$ is the
same as the number of non-similar irreducible complex
representations of $G$ and $k_{p'}(G)$ is the same as the number of
non-similar irreducible representations of $G$ over an algebraically
closed field $\mathbb{F}$ of characteristic $p$. It was shown recently in
\cite[Theorem 1.1]{Hung-Maroti} that if $G$ has order divisible by
$p$, then $k_{p'}(G)\geq 2\sqrt{p-1}+1-k_p(G)$, where $k_{p}(G)$
denotes the number of conjugacy classes of $p$-elements of $G$. As
it is obvious from the bound itself that equality could occur only
when $p-1$ is a perfect square, a ``correct'' bound remains to be
found.

Motivated by the study of blocks which contain a small number of
characters, in this paper we focus on the extremal situation where
$G$ has a unique non-trivial conjugacy class of $p$-elements.

\begin{theorem}\label{theorem-kp'}
Let $p$ be a prime and $G$ a finite group in which all non-trivial
$p$-elements are conjugate. Then one of the following holds:
\begin{enumerate}[\rm(i)]
\item $k_{p'}(G)\ge p$.
\item $k_{p'}(G)=p-1$ and $G\cong C_p\rtimes C_{{p-1}}$ (Frobenius group).
\item $p = 11$, $G \cong C^2_{11} \rtimes\SL(2,5)$ (Frobenius group) and $k_{p'}(G)=9$.
\end{enumerate}
\end{theorem}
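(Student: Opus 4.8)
The strategy splits according to whether $\bO_p(G)$ is trivial. Two preliminary observations are used throughout. First, the hypothesis forces every Sylow $p$-subgroup of $G$ to have exponent $p$, since an element of order $p^2$ is not conjugate to its non-trivial $p$-th power. Second, there is a counting identity
\[
k_{p'}(G)=k(G)-k_{p'}(\Centralizer_G(u)),
\]
valid for every non-trivial $p$-element $u\in G$: the $p$-part of every $p$-singular element of $G$ is conjugate to $u$, and sending a $p$-singular class to the $\Centralizer_G(u)$-class of the $p'$-part of a representative whose $p$-part is exactly $u$ gives a bijection from the $p$-singular classes of $G$ onto the $p$-regular classes of $\Centralizer_G(u)$.

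Suppose first that $\bO_p(G)\neq 1$. Any non-trivial normal subgroup that contains one $p$-element contains all of them, so $\bO_p(G)=P$ is a Sylow $p$-subgroup (necessarily normal); it is elementary abelian, say $|P|=p^n$, and if $G=P\rtimes H$ with $H$ a $p'$-complement (Schur--Zassenhaus), then $\IBr_p(G)=\IBr_p(H)$ gives $k_{p'}(G)=k(H)$, while the hypothesis just says $H$ acts transitively on $P\smallsetminus\{1\}$. Writing $\bar H$ for the image of $H$ in $\GL(P)$ --- a transitive linear group, to which Hering's theorem applies --- and using $k(H)\geq k(\bar H)$, I would run through Hering's list. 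In the semilinear case a transitive subgroup of $\GammaL(1,p^n)$ whose order is a multiple of $p^n-1$ has at least $p-1$ conjugacy classes, with $k(\bar H)=p-1$ only when $n=1$ and $\bar H=C_{p-1}$; when $\bar H$ contains $\SL(a,q)$, $\Sp(2m,q)$ or $G_2(q)'$ one bounds $k(\bar H)$ below by its number of classes of semisimple elements, which exceeds $p-1$; and for the finitely many exceptional entries of Hering's list one computes directly, the only possibility with $k(H)<p-1$ being $p^n=11^2$, $H=\SL(2,5)$, $k(H)=9=p-2$. In the two borderline cases the relevant group acts regularly on $P\smallsetminus\{1\}$, so $G$ is a Frobenius group, and one lands exactly in alternatives (ii) (with $G\cong C_p\rtimes C_{p-1}$) and (iii); every other outcome gives $k_{p'}(G)=k(H)\geq p$, i.e.\ alternative (i).

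Suppose now that $\bO_p(G)=1$. If also $\bO_{p'}(G)=1$, then $\mathbf{F}(G)=1$, so $\mathbf{F}^*(G)=E(G)=S_1\times\cdots\times S_t$ is a direct product of non-abelian simple groups with $\Centralizer_G(E(G))=1$. At least one $S_i$ has order divisible by $p$ (otherwise $E(G)\leq\bO_{p'}(G)=1$); if two did, a $p$-element of $E(G)$ supported on two factors and one supported on a single factor would be non-conjugate in $G$, because the permutation action of $G$ on the factors preserves the number of non-trivial coordinates. Hence exactly one $S_i$, necessarily $G$-invariant, has order divisible by $p$; the product of the remaining factors is a normal $p'$-subgroup, hence trivial, so $G$ is almost simple with socle $S$. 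A normalizer computation rules out $p\nmid|S|$ (there $P$ would be cyclic of order $p$ consisting of field automorphisms, and $\Normalizer_G(P)/\Centralizer_G(P)$ could not exhaust $\Aut(P)$). Now I would invoke the classification of finite simple groups: for each candidate $S$ one determines when $G$ has all non-trivial $p$-elements conjugate, and checks $k_{p'}(G)\geq p$. The hypothesis is very restrictive: in defining characteristic it forces rank one (essentially $\PGL(2,q)\leq G\leq\Aut(\PSL(2,q))$), for alternating groups it forces $S=\Al_n$ with $p\leq n<2p$, and there remain some non-defining-characteristic configurations (such as $\PSL(2,q)$ or $\mathrm{Sz}(q)$ for suitable primes) and a short list of sporadic groups; in each of them the count $k_{p'}(G)=k(G)-k_{p'}(\Centralizer_G(u))$ is comfortably at least $p$. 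Finally, if $\bO_p(G)=1$ but $\bO_{p'}(G)\neq 1$, pass to $\bar G=G/\bO_{p'}(G)$, which still has all non-trivial $p$-elements conjugate and now satisfies $\bO_{p'}(\bar G)=1$; by the cases already treated, $k_{p'}(\bar G)\geq p-1$, with the value $p-1$ only for $\bar G\cong C_p\rtimes C_{p-1}$ and the value $p-2$ only for $\bar G\cong C_{11}^2\rtimes\SL(2,5)$. Since $k_{p'}(G)\geq k_{p'}(\bar G)$, we are done unless $k_{p'}(G)=k_{p'}(\bar G)$, which means every simple $\FP G$-module is inflated from $\bar G$ and hence forces $\bO_{p'}(G)\leq\bO_p(G)=1$, a contradiction.

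The main difficulty is the classification-of-finite-simple-groups bookkeeping in the second case: one must pin down exactly which almost simple groups satisfy the hypothesis (all non-identity $p$-elements conjugate, Sylow of exponent $p$) and then verify the --- intuitively generous --- bound $k_{p'}(G)\geq p$ uniformly over all of them and all relevant primes. A secondary technical point is computing, for the exceptional entries of Hering's list and the ``small'' transitive linear groups of order $p^n-1$, the exact numbers of conjugacy classes, so as to confirm that $\SL(2,5)$ on $\FF_{11}^2$ is the only source of alternative (iii).
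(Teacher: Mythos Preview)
Your overall architecture is sound and not far from the paper's. Two remarks on the set-up. In your Case~1 ($\bO_p(G)\neq 1$) the Sylow $p$-subgroup $P$ is normal and $H=G/P$ is a $p'$-group, so $G$ is automatically $p$-solvable; the $\SL/\Sp/G_2$ branches of Hering's list are therefore vacuous, and Passman's $p$-solvable classification already suffices, which is what the paper invokes. For the almost simple case the paper takes a different and more efficient route than the one you sketch: rather than estimating $k_{p'}(G)$ directly via your identity $k_{p'}(G)=k(G)-k_{p'}(\bC_G(u))$, it proves the stronger block-theoretic inequality $l(B_0(G))\geq p$ using Geck's unipotent basic sets and the relative Weyl groups of Sylow $d$-tori, then reads off $k_{p'}(G)\geq l(B_0)$. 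Your direct count should also work but requires more bookkeeping.

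There is, however, a genuine gap in your reduction step. When $\bar G\cong C_{11}^2\rtimes\SL(2,5)$ and $\bO_{p'}(G)\neq 1$ (equivalently, in your Case~1 language, when $\bar H=\SL(2,5)$ on $\FF_{11}^2$ but $\bC_H(P)\neq 1$), the strict inequality $k_{p'}(G)>k_{p'}(\bar G)=9$ only yields $k_{p'}(G)\geq 10$, and your ``$k_{p'}(G)=k_{p'}(\bar G)$ forces $\bO_{p'}(G)\leq\bO_p(G)$'' argument does not engage. Since such a $G$ satisfies neither alternative~(ii) (wrong Sylow subgroup) nor~(iii) (wrong group), you must still rule out $k_{11'}(G)=10$. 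This is not automatic: one has to note that then all non-identity elements of $N:=\bO_{11'}(G)$ are $G$-conjugate, so $N$ is an elementary abelian $q$-group for some prime $q\neq 11$; analyze the action on $N$ of the preimage $M$ of $C_{11}^2$ (the $M$-orbits on $N\setminus\{1\}$ have size $1$ or $11$, since the non-cyclic $M/N$ cannot act fixed-point-freely); bound $|N|$ via divisibility into $|G/M|=120$; and eliminate the handful of surviving possibilities. The paper carries this out explicitly in the last two paragraphs of its $p$-solvable theorem. Without that extra argument the $p=11$ exception leaks through both your Case~1 and your Case~3.
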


Finite groups with a unique non-trivial conjugacy class of
$p$-elements arise naturally from block theory. For a $p$-block $B$
of a group $G$, as usual let $\Irr(B)$ and $\IBR(B)$ respectively
denote the set of irreducible ordinary characters of $G$ associated
to $B$ and the set of irreducible Brauer characters of $G$
associated to $B$, and set $k(B):=|\Irr(B)|$ and $l(B):=|\IBR(B)|$.
The difference $k(B)-l(B)$ is one of the important invariants of the
block $B$ as it somewhat measures the complexity of $B$, and in
fact, the study of blocks with small $k(B)-l(B)$ has attracted
considerable interest, see
\cite{Kulshammer-Navarro-Sambale-Tiep,KoshitanikB3,Rizo} and
references therein.

It is well-known that $k(B)-l(B)=0$ if and only if $k(B)=l(B)=1$, in
which case the defect group of $B$ is trivial. What happens when
$k(B)-l(B)=1$? Brauer's formula for $k(B)$ (see \cite[p.
7]{Kulshammer-Navarro-Sambale-Tiep}) then implies that all
non-trivial $B$-subsections are conjugate. (Recall that a
$B$-subsection is a pair $(u, b_u)$ consisting of a $p$-element
$u\in G$ and a $p$-block $b_u$ of the centralizer $\bC_G(u)$ such
that the induced block $b_u^G$ is exactly $B$.) Therefore, if $B_0$
is the principal $p$-block of $G$ and $k(B_0)-l(B_0)=1$, then all
the non-trivial $p$-elements of $G$ are conjugate.

Given a $p$-block $B$ of $G$,
the well-known blockwise Alperin weight conjecture (BAW) claims that
$l(B)$ is equal to the number of $G$-conjugacy classes of
$p$-weights of $B$ (for details see Section~3). The conjecture
implies $l(B_0)\ge l(b_0)$ where $B_0$ and $b_0$ are the principal
blocks of $G$ and $\bN_G(P)$ respectively. It is easy to see that
$l(b_0)=k_{p'}(\bN_G(P)/\mathbf{O}_{p'}(\bN_G(P)))$.

Now suppose that $k_p(G)=2$. Then the main result of
\cite{Kulshammer-Navarro-Sambale-Tiep} asserts that, aside from very
few exceptions, the Sylow $p$-subgroups of $G$ are (elementary)
abelian, and so let us assume for a moment that $P\in\Syl_p(G)$ is
abelian. It follows that $\bN_G(P)$ controls $G$-fusion in $P$, and
thus $\bN_G(P)/\mathbf{O}_{p'}(\bN_G(P))$ has a unique non-trivial
conjugacy class of $p$-elements as $G$ does. Therefore, the BAW
conjecture and (the $p$-solvable case of) Theorem~\ref{theorem-kp'}
suggest the following, which we are able to prove using only the
known cyclic Sylow case of the conjecture.

\begin{theorem}\label{main-l(Bo)>p-1}
Let $p$ be a prime and $G$ a finite group in which all non-trivial
$p$-elements are conjugate. Let $B_0$ denote the principal $p$-block
of $G$. Then one of the following holds:
\begin{enumerate}[\rm(i)]
\item $l(B_0)\ge p$.
\item $l(B_0)=p-1$ and $\bN_G(P)/\mathbf{O}_{p'}(\bN_G(P))\cong C_p\rtimes C_{{p-1}}$ (Frobenius group).
\item $p = 11$, $G/\mathbf{O}_{p'}(G)
\cong C^2_{11} \rtimes\SL(2,5)$ (Frobenius group) and $l(B_0)=9$.
\end{enumerate}
\end{theorem}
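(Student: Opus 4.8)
The plan is to pass first to $G/\bO_{p'}(G)$, which changes neither $l(B_0)$ nor the hypothesis nor conclusions (ii)--(iii), so I may assume $\bO_{p'}(G)=1$, and then to argue according to $P\in\Syl_p(G)$. Since all non-trivial $p$-elements of $G$ are conjugate they have the same order, namely $p$, so $P$ has exponent $p$; in particular $P$ is cyclic exactly when $|P|=p$. If $|P|=p$, then $B_0$ has cyclic defect group $C_p$, so by the theory of blocks with cyclic defect group (equivalently, the known cyclic-defect case of the Alperin weight conjecture) $l(B_0)$ equals the inertial index $e=|\bN_G(P):\bC_G(P)|$, a divisor of $p-1$. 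Because $\bN_G(P)$ controls $G$-fusion in $P$ and the $p-1$ non-trivial elements of $P$ are $G$-conjugate, $\bN_G(P)/\bC_G(P)$ acts transitively on $P\setminus\{1\}$; hence $e=p-1$ and $l(B_0)=p-1$, and a routine computation (using that $\bO_{p'}(\bN_G(P))$ centralises $P$, so $\bC_{\bN_G(P)}(P)/\bO_{p'}(\bN_G(P))\cong C_p$ with a faithfully acting complement) gives $\bN_G(P)/\bO_{p'}(\bN_G(P))\cong C_p\rtimes C_{p-1}$, which is conclusion (ii).

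Assume now $|P|\ge p^2$. If $\bO_p(G)\ne1$, choose $1\ne z\in\bO_p(G)$: every non-trivial $p$-element of $G$ is conjugate to $z$, hence lies in $\bO_p(G)$, so $P=\bO_p(G)\trianglelefteq G$ (and then $P$ is elementary abelian, since $\Aut(P)$ is transitive on $P\setminus\{1\}$). As $\bO_{p'}(G)=1$ and $\bO_p(G)\in\Syl_p(G)$, the group $G$ has a unique $p$-block, whence $l(B_0)=l(G)=k_{p'}(G)$; since $|P|\ge p^2$, Theorem~\ref{theorem-kp'} then yields either $l(B_0)\ge p$ (conclusion (i)) or $p=11$ and $G\cong C^2_{11}\rtimes\SL(2,5)$ with $l(B_0)=9$ (conclusion (iii)).

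It remains to treat $|P|\ge p^2$ with $\bO_p(G)=1$. Here $F^*(G)=E(G)\ne1$; its centre $\bZ(E(G))$ is a normal subgroup of $G$ whose $p$-part lies in $\bO_p(G)=1$ and whose $p'$-part lies in $\bO_{p'}(G)=1$, hence is trivial, so the components are non-abelian simple; since a non-trivial $p$-element lying in one component is never $G$-conjugate to one with non-trivial image in two components, there is exactly one component, and (as $E(G)\not\le\bO_{p'}(G)$) it has order divisible by $p$ — so $G$ is almost simple with socle $S$, $p\mid|S|$. If $p=2$, then $\bO_2(G)=1$ forces the Sylow $2$-subgroups to be non-normal, so $l(B_0)\ge2=p$. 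If $p$ is odd, one invokes the classification of finite simple groups: the combination ``$\exp(P)=p$'' and ``all non-trivial $p$-elements are $G$-conjugate'' is extremely restrictive — among groups of Lie type in the defining characteristic it permits only $S=\PSL_2(p^f)$ (every other family has several non-fused classes of unipotent elements), it excludes alternating groups of degree $\ge2p$, and a direct inspection disposes of the cross-characteristic groups of Lie type and the sporadic groups, only a handful of which occur and for all of which $l(B_0)\ge p$ is read off from known block data. In the principal case $S=\PSL_2(p^f)$ with $f\ge2$, fusing the two unipotent classes of $S$ forces $\PGL_2(p^f)\le G$, and in the defining characteristic $\FF_p\PGL_2(p^f)$ has exactly the principal block together with two defect-zero blocks (the two $p^f$-dimensional Steinberg-type projective simple modules) among its $p^f+1$ simple modules, so $l\bigl(B_0(\PGL_2(p^f))\bigr)=p^f-1\ge p$; for $G$ strictly between $\PGL_2(p^f)$ and $\Aut(\PSL_2(p^f))$ the same conclusion $l(B_0)\ge p$ follows from Clifford theory over $\PGL_2(p^f)$ (the cyclic quotient $G/\PGL_2(p^f)$ has order dividing $f$, and $(p^f-1)/f\ge p$), or directly from the explicit defining-characteristic representation theory of $\mathrm{P\Gamma L}_2(p^f)$.

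The hard part is precisely this last case: carrying out the classification-based enumeration of the almost simple groups with non-cyclic Sylow $p$-subgroup in which all non-trivial $p$-elements are conjugate, and then verifying $l(B_0)\ge p$ for each — comfortably true for the genuinely large groups that arise, but still a case-by-case check. By contrast the case $\bO_p(G)\ne1$ is soft (granted the standard fact that a group with $\bO_{p'}=1$ and a normal Sylow $p$-subgroup has only one $p$-block, it is just an application of Theorem~\ref{theorem-kp'}), and the cyclic-Sylow case is classical cyclic-defect theory.
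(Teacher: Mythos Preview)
Your reduction skeleton matches the paper's: pass to $\bO_{p'}(G)=1$, treat cyclic Sylow via Dade, treat the case of a normal Sylow $p$-subgroup by observing $G$ then has a single block so $l(B_0)=k_{p'}(G)$, and reduce the remaining case to almost simple groups. Two points deserve correction. First, in the normal-Sylow case you invoke Theorem~\ref{theorem-kp'}, but in the paper's logic that theorem is \emph{deduced from} Theorem~\ref{main-l(Bo)>p-1} (via $k_{p'}(G)\ge l(B_0)$), so your citation is circular; what you actually need, and what the paper uses, is only the $p$-solvable case, Theorem~\ref{psolv}, which is proved independently. Second, the paper does not lump non-abelian $P$ in with the generic almost-simple analysis: it first invokes \cite{Kulshammer-Navarro-Sambale-Tiep} to pin down the finitely many non-abelian-$P$ possibilities ($Ru$, $J_4$, $\ta F_4(q)'$ for $p=3$; $Th$ for $p=5$) and checks those directly, and only afterwards assumes $P$ abelian. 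Your write-up silently absorbs these into ``sporadic'' and ``cross-characteristic''.

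The substantive gap is exactly the one you flag in your last paragraph: the cross-characteristic Lie-type case. Your ``direct inspection disposes of'' is not a proof; this is the content of the paper's Theorem~\ref{thm:B0-Lie-type}, and its proof is not a finite look-up. For exceptional types one needs that the unipotent characters in $B_0(S)$ form a basic set (Geck), so $l(B_0(S))=k(W(\mathcal L_d))$ for the relative Weyl group of a Sylow $d$-torus, together with a comparison of $k(W(\mathcal L_d))$ against $|W(\mathcal T_d)|$ to bound the number of $p$-classes; for classical types one instead exhibits two non-fused $p$-elements directly via block-diagonal embeddings. None of this is ``read off from known block data''. Your alternative treatment of $\PSL_2(p^f)$ by counting defect-zero blocks of $\PGL_2$ and then using Clifford theory for $G/\PGL_2$ is fine and a legitimate substitute for the paper's appeal to the (known) Alperin weight conjecture in defining characteristic; note though that the paper's route has the advantage of handling the exclusion of the $p=11$ exception uniformly (it checks that $\SL(2,5)$ is not involved in $\bN_G(P)$ for $S=\PSL_2(121)$), whereas in your approach you must separately observe that $G/\bO_{p'}(G)$ is not $C_{11}^2\rtimes\SL(2,5)$ when $G$ is almost simple.
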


Theorem \ref{main-l(Bo)>p-1} implies that if $G$ is a finite group
with $k_p(G)=2$ then $k(B_0)\geq p$ or $p = 11$ and $k(B_0)=10$.
Indeed, we obtain the following. Here, $k_{0}(B)$ denotes the number
of irreducible ordinary characters of height $0$ in $B$.

\begin{theorem}\label{main-ko>p}
Let $p$ be a prime and $G$ a finite group in which all non-trivial
$p$-elements are conjugate. Let $B_0$ denote the principal $p$-block
of $G$. Then $k_{0}(B_0)\geq p$ or $p = 11$ and $k_0(B_0)=10$.
\end{theorem}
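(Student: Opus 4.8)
The plan is to deduce this statement from Theorem~\ref{main-l(Bo)>p-1} together with the elementary inequality $k_0(B_0)\geq l(B_0)+1$, which holds whenever $k(B_0)-l(B_0)\geq 1$ — and indeed in our situation the hypothesis $k_p(G)=2$ forces $k(B_0)-l(B_0)=1$ by Brauer's formula as recalled in the introduction, so that $k_0(B_0) = k(B_0) = l(B_0)+1$ once we know all characters in $B_0$ have height $0$. First I would invoke Theorem~\ref{main-l(Bo)>p-1}: in case~(i) we get $l(B_0)\geq p$, hence $k_0(B_0)\geq l(B_0)+1 > p$; in case~(ii) we get $l(B_0)=p-1$, so $k_0(B_0)\geq p$; and in the exceptional case~(iii), $G/\bO_{p'}(G)\cong C_{11}^2\rtimes\SL(2,5)$ with $l(B_0)=9$, which would give $k_0(B_0)=10$ and $p=11$, matching the claimed conclusion.

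The real content is therefore to verify two things: that every irreducible character in $B_0$ has height $0$ (so $k_0(B_0)=k(B_0)$), and that $k(B_0)=l(B_0)+1$ exactly. For the height-zero claim, I would argue that a Sylow $p$-subgroup $P$ of $G$ is abelian except in the handful of exceptions classified in \cite{Kulshammer-Navarro-Sambale-Tiep}, and that when $P$ is abelian all heights in $B_0$ vanish by Brou\'e's abelian defect conjecture in the (now proven, by Kessar--Malle) special case needed, or more simply because $k_0(B_0)=k(B_0)$ is equivalent to $P$ abelian by the resolution of the height-zero conjecture for principal blocks. For the non-abelian exceptional Sylow cases from \cite{Kulshammer-Navarro-Sambale-Tiep} — these are a short explicit list — I would check the bound $k_0(B_0)\geq p$ directly, using that those exceptions occur only for small $p$ and their principal blocks are completely understood. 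Finally, the reduction $k(B_0)=l(B_0)+1$ in case~(ii) follows since $\bN_G(P)/\bO_{p'}(\bN_G(P))\cong C_p\rtimes C_{p-1}$ has cyclic Sylow, forcing $P$ cyclic of order $p$, so $B_0$ has cyclic defect group and $k(B_0)-l(B_0)=1$ with all heights $0$ by Dade's theory of blocks with cyclic defect groups.

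The main obstacle I anticipate is the bookkeeping for the non-abelian Sylow exceptions: one must be certain that in each such case the principal block still satisfies $k_0(B_0)\geq p$ (with no new exception beyond $p=11$), and this requires pinning down $k_0(B_0)$ for each group on the \cite{Kulshammer-Navarro-Sambale-Tiep} list rather than merely $l(B_0)$. Since that list is finite and the groups are small, this is a finite check, but it is where the argument is least automatic. Everything else is a formal consequence of Theorem~\ref{main-l(Bo)>p-1} and the inequality $k_0(B_0)\geq l(B_0)+1$.
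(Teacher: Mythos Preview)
Your overall architecture matches the paper's proof: split into the abelian Sylow case (where Kessar--Malle gives $k_0(B_0)=k(B_0)$ and then Theorem~\ref{main-l(Bo)>p-1} finishes) and the non-abelian Sylow exceptions from \cite{Kulshammer-Navarro-Sambale-Tiep} (handled by direct inspection). So the strategy is right.

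There is, however, a genuine error in your first paragraph. You assert that $k_p(G)=2$ forces $k(B_0)-l(B_0)=1$ ``by Brauer's formula as recalled in the introduction''. The introduction states the \emph{opposite} implication: $k(B_0)-l(B_0)=1$ implies $k_p(G)=2$. Your direction is false. Brauer's formula gives $k(B_0)-l(B_0)=l\bigl(B_0(\bC_G(u))\bigr)$ for a non-trivial $p$-element $u$, and this equals $1$ only when $\bC_G(u)$ is $p$-nilpotent, which need not happen. For instance, $G=M_{11}$ at $p=3$ has $k_p(G)=2$ but $k(B_0)=9$ and $l(B_0)=7$, so $k(B_0)-l(B_0)=2$. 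Likewise, your ``elementary inequality $k_0(B_0)\ge l(B_0)+1$ whenever $k(B_0)-l(B_0)\ge 1$'' is not elementary and is not a general fact about blocks; you are implicitly using $k_0(B_0)=k(B_0)$, which is exactly what Kessar--Malle supplies in the abelian case and is not available otherwise.

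Fortunately the repair is painless: you only ever need the inequality $k(B_0)\ge l(B_0)+1$, which holds for any block of positive defect. In the abelian Sylow case this combines with $k_0(B_0)=k(B_0)$ to give $k_0(B_0)\ge l(B_0)+1$, and then Theorem~\ref{main-l(Bo)>p-1} yields $k_0(B_0)\ge p$ in cases (i)--(ii) and $k_0(B_0)\ge 10$ in case (iii) (the disjunctive conclusion of the theorem then holds automatically, regardless of whether $k_0(B_0)$ equals $10$ or exceeds it). The non-abelian exceptions are handled exactly as you say, and exactly as the paper does: for $p=5$ one checks $k_0(B_0(Th))=20$, and for $p=3$ one checks $Ru$, $J_4$, ${}^2F_4(2)'$, ${}^2F_4(2)$ directly and exhibits three $\Aut(S)$-invariant $3'$-degree characters in $B_0$ for the remaining ${}^2F_4(q)$. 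So once you replace the false equality by the trivially true inequality, your proof \emph{is} the paper's proof.
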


We mention another consequence, which is useful in the study of
principal blocks with few characters, in particular the case
$k(B_0)-l(B_0)=1$. Note that by \cite[Theorem
3.6]{Kulshammer-Navarro-Sambale-Tiep}, the Sylow $p$-subgroups of
$G$ then must be (elementary) abelian, and hence by
\cite{Kessar-Malle}, $k_0(B_0)=k(B_0)$.

\begin{corollary}\label{cor-main}
Let $p$ be a prime and $G$ a finite group with principal $p$-block $B_0$. If $k(B_0)-l(B_0)=1$ then $k_0(B_0)=
k(B_0)\ge p$ or $p=11$ and $k(B_0)=10$.
\end{corollary}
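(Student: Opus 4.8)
The plan is to derive Corollary~\ref{cor-main} as an immediate consequence of Theorem~\ref{main-l(Bo)>p-1}, together with the structural input already recalled in the excerpt. First I would observe, following the discussion just before Brauer's formula, that the hypothesis $k(B_0)-l(B_0)=1$ forces all non-trivial $B_0$-subsections to be $G$-conjugate; since $(u,b_u)$ with $b_u$ the principal block of $\bC_G(u)$ is always a $B_0$-subsection for every non-trivial $p$-element $u$, this means all non-trivial $p$-elements of $G$ lie in a single conjugacy class. Thus $G$ satisfies the hypothesis of Theorem~\ref{main-l(Bo)>p-1}.

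Next I would invoke \cite[Theorem 3.6]{Kulshammer-Navarro-Sambale-Tiep} to conclude that, under $k(B_0)-l(B_0)=1$, the Sylow $p$-subgroups of $G$ are (elementary) abelian; by the resolution of Brauer's height zero conjecture in the abelian defect case \cite{Kessar-Malle}, every irreducible character in $B_0$ then has height $0$, so $k_0(B_0)=k(B_0)$. It therefore remains only to bound $k(B_0)$ from below.

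For this I would use $k(B_0)=l(B_0)+1$ and feed in the trichotomy of Theorem~\ref{main-l(Bo)>p-1}. In case (i) we get $k(B_0)=l(B_0)+1\ge p+1>p$. In case (iii) we get $l(B_0)=9$ and $p=11$, hence $k(B_0)=10$, which is exactly the stated exceptional conclusion. The only remaining case is (ii), where $l(B_0)=p-1$ and $\bN_G(P)/\bO_{p'}(\bN_G(P))\cong C_p\rtimes C_{p-1}$; then $k(B_0)=p$, which again satisfies $k(B_0)\ge p$. Combining the three cases yields $k_0(B_0)=k(B_0)\ge p$ unless $p=11$ and $k(B_0)=10$, as claimed.

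The content here is essentially bookkeeping once Theorem~\ref{main-l(Bo)>p-1} is in hand; the only subtlety to be careful about is the logical step that $k(B_0)-l(B_0)=1$ implies all non-trivial $p$-elements are conjugate, which relies on Brauer's formula for $k(B)$ and the fact that every non-trivial $p$-element supports a genuine $B_0$-subsection (via the principal block of its centralizer, whose induced block is $B_0$ by Brauer's third main theorem). Since both of these are standard and the former is already cited in the excerpt, I do not expect any real obstacle; the ``hard'' work has been front-loaded into Theorem~\ref{main-l(Bo)>p-1} and the cited height-zero and abelian-defect results.
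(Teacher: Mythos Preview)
Your proposal is correct and matches the paper's own argument essentially line for line: the paper likewise derives the corollary from Theorem~\ref{main-l(Bo)>p-1} after noting that $k(B_0)-l(B_0)=1$ forces a single class of non-trivial $p$-elements, that \cite[Theorem~3.6]{Kulshammer-Navarro-Sambale-Tiep} then yields abelian Sylow $p$-subgroups, and that \cite{Kessar-Malle} gives $k_0(B_0)=k(B_0)$. Your added remark about Brauer's third main theorem justifying the subsection step is a welcome clarification but not a departure from the intended route.
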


For a quick example, let us assume that $k(B_0)=4$ and $l(B_0)=3$. Then
Corollary~\ref{cor-main} implies that $p\leq 4$, and since the case
$p=3$ is eliminated by \cite[Corollary 1.6]{Landrock81}, one ends up
with $p=2$, implying that the defect group of $B_0$ must be of order
4 by \cite[Corollary 1.3]{Landrock81}, and thus is the Klein four
group. This result was recently proved in \cite[\S5]{KoshitanikB3}.
(See Section~\ref{section-applications} for more examples with
$k(B_0)=l(B_0)+1=5$ and $k(B_0)=l(B_0)+1=7$.)

In Section~\ref{sec:kp(G)=3} we go one step further and prove that
$k_{p'}(G)\ge (p-1)/2$ for finite groups $G$ with at most three
classes of $p$-elements. As explained in
Section~\ref{section-Alperin}, this and the BAW conjecture then
imply that $l(B_0)\geq (p-1)/2$ for principal blocks $B_0$ of groups
with $1<k_p(G)\leq 3$. In general, we propose that $l(B_0)\geq
2\sqrt{p-1}+1-k_p(G)$ for arbitrary groups of order divisible by
$p$, and this follows from \cite[Theorem 1.1]{Hung-Maroti} and again
the BAW conjecture. We should mention that our proposed bound
complements the conjectural upper bound for the number $l(B)$
proposed by Malle and Robinson \cite{malle-robinson}, namely
$l(B)\leq p^{r(B)}$, where $r(B)$ is the \emph{sectional $p$-rank}
of a defect group of $B$.

The paper is organized as follows. In the next
Section~\ref{sec:p-solvable}, we prove Theorem~\ref{theorem-kp'} for
$p$-solvable groups. In Section~\ref{section-Alperin} we make a
connection between Theorem~\ref{main-l(Bo)>p-1} and other bounds on
$l(B_0)$ with the BAW conjecture. Section~\ref{sec:reduction}
reduces Theorem~\ref{main-l(Bo)>p-1} to almost simple groups of Lie
type, which are then solved in
Section~\ref{sec:B0-almost-simple-gps}. In Section~\ref{sec:kp(G)=3}
we prove a general bound for the number of $p$-regular conjugacy
classes in almost simple groups without any assumption on the number
of $p$-classes, and this will be used to achieve a right bound for
$k_{p'}(G)$ for finite groups $G$ with at most three classes of
$p$-elements. Finally, the proof of Theorem~\ref{main-ko>p} and more
examples of applications of Theorem~\ref{main-l(Bo)>p-1} are
presented in Section~\ref{section-applications}.


\section{\texorpdfstring{$p$}{p}-Solvable
groups}\label{sec:p-solvable}

We begin by proving Theorem \ref{theorem-kp'} for $p$-solvable
groups.

\begin{theorem}\label{psolv}
Let $G$ be a $p$-solvable group with $k_p(G)=2$.
Then one of the following holds:
\begin{enumerate}[\rm(i)]
\item $k_{p'}(G)\ge p$.
\item $k_{p'}(G)=p-1$ and $G\cong C_p\rtimes C_{{p-1}}$ (Frobenius group).
\item $p = 11$, $G \cong C^2_{11} \rtimes\SL(2,5)$ (Frobenius group) and $k_{p'}(G)=9$.
\end{enumerate}
\end{theorem}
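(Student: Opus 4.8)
The plan is to exploit the structure of a $p$-solvable group $G$ with exactly one non-trivial class of $p$-elements. First I would reduce to the case where $\bO_{p'}(G)=1$: replacing $G$ by $G/\bO_{p'}(G)$ does not change $k_p(G)$ (the $p$-elements still form a single non-trivial class), and by the standard Fong–Reynolds/Clifford-theoretic correspondence it does not change $k_{p'}(G)$ either since all irreducible Brauer characters, equivalently $p$-regular classes, of the principal block are inflated from the quotient; more elementarily, $k_{p'}(G)=k_{p'}(G/\bO_{p'}(G))$ holds whenever $\bO_{p'}(G)$ is central, and the general $p$-solvable case follows by an easy induction. Having assumed $\bO_{p'}(G)=1$, $p$-solvability forces $F:=\bO_p(G)\neq 1$ and $\bC_G(F)\le F$. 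Now the hypothesis $k_p(G)=2$ says that $G$ acts transitively on $F\setminus\{1\}$ by conjugation (every non-trivial $p$-element lies in $F$, since a Sylow $p$-subgroup containing it meets $\bO_p(G)$ nontrivially and all $p$-elements are conjugate into any fixed Sylow subgroup — one checks that in fact $F$ is itself the Sylow $p$-subgroup here because any $p$-element outside $F$ would generate a second class). Transitivity of $G$ on $F\setminus\{1\}$ makes $F$ an elementary abelian $p$-group, say $F=C_p^{\,n}$, so $G/F\le\GL(n,p)$ acts transitively on the $p^n-1$ non-zero vectors.

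The next step is to invoke the classification of transitive linear groups (Hering's theorem). The transitive subgroups $H\le\GL(n,p)$ on $\FF_p^n\setminus\{0\}$ fall into a short explicit list: $H\le\GammaL(1,p^n)$; $H\trianglerighteq\SL(m,q)$, $\Sp(m,q)$, or $G_2(q)'$ for appropriate $q=p^a$, $n=ma$; or one of finitely many exceptional cases, among which the relevant small ones have $H$ of the shape $\SL(2,3)$, $\SL(2,5)$, or related extensions acting on $\FF_p^2$ for $p\in\{5,7,9,11,19,23,29,59\}$ (and the extraspecial normalizer cases). For each family I would estimate $k_{p'}(G)$ from below. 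Since $G=F\rtimes H$ is a Frobenius group (the point stabilizer $H$ acts without fixed points on $F\setminus\{1\}$ means $H$ acts fixed-point-freely, so indeed $G$ is Frobenius with kernel $F$), the $p$-regular classes of $G$ are: the identity, plus the classes of non-identity elements of $H$, counted up to $G$-conjugacy; and two elements of the complement $H$ are $G$-conjugate iff they are $H$-conjugate. Hence $k_{p'}(G)=k(H)$, the number of conjugacy classes of $H$. So the whole problem becomes: classify fixed-point-free (equivalently, here, transitive-on-nonzero-vectors, which forces semiregular, hence Frobenius-complement-like) linear groups $H$ with $k(H)<p-1$, and pin down the unique exception.

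So I must show: if $H\le\GL(n,p)$ is transitive on $\FF_p^n\setminus\{0\}$ then $k(H)\ge p-1$, with equality only for $H=C_{p-1}$ acting as scalars on $\FF_p^1$ (case (ii)), unless $p=11$ and $H\cong\SL(2,5)$ with $k(\SL(2,5))=9=p-2$ (case (iii)). Since $|H|$ is divisible by $p^n-1\ge p-1$, and a group of order $m$ always has at least... no — the bound $k(H)\ge p-1$ does not follow from order alone, so one genuinely walks the Hering list. For $H\le\GammaL(1,p^n)$: here $H$ has a cyclic normal subgroup $C$ of order dividing $p^n-1$ with $H/C$ cyclic of order dividing $n$, and $C_{p-1}\le C$ up to conjugacy after restricting the action... the cleanest argument is that the cyclic group $\FF_{p^n}^\times$ contains the scalars $\FF_p^\times\cong C_{p-1}$ and $H\cap\FF_{p^n}^\times$ must still act transitively-enough; in the base case $n=1$, $H\le C_{p-1}$ and transitivity forces $H=C_{p-1}$, giving exactly $k_{p'}(G)=p-1$ and the Frobenius group $C_p\rtimes C_{p-1}$. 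For $n\ge2$ in the $\GammaL(1,\cdot)$ case and in the $\SL/\Sp/G_2$ cases one shows $k(H)$ is comfortably $\ge p$ (here $p$ is small relative to $|H|$: e.g. $\SL(m,q)\trianglelefteq H$ with $q^m-1\mid |H|$ forces $p^n=q^m$, and $k(\SL(m,q))$ grows like $q^{m-1}$, easily beating $p-1=q^{ma/1}$-ish bounds once one is careful — this needs the precise relation $n=ma$, $q=p^a$). The only place equality or near-equality can happen is the finitely many small exceptional cases on $\FF_p^2$: here one computes directly. The candidates are $H\in\{Q_8, \SL(2,3), \SL(2,3).2, \SL(2,5), \SL(2,5).2\}$ (and cyclic/metacyclic pieces) for $p\in\{5,7,9\text{ excluded},11,13,...,59\}$, and one must have $|H|\ge p^2-1$ and $H$ a Frobenius complement. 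Running through: $k(Q_8)=5$, $k(\SL(2,3))=7$, $k(\SL(2,5))=9$, $k(\SL(2,5).2)=?$; matching against the needed $p^2-1\mid |H|$ one finds $\SL(2,5)$ with $|\SL(2,5)|=120$, $p^2-1\mid 120$ forces $p^2\le 121$, $p=11$ (since $p=2,3$ are not $p$-solvable-relevant and the action is on $\FF_{11}^2$, $120=11^2-1$ exactly, so $H$ is sharply transitive-complement), giving $k_{p'}(G)=k(\SL(2,5))=9=p-2<p-1$ — this is exception (iii). Every other exceptional pair yields $k(H)\ge p-1$, often with room.

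The main obstacle is the bookkeeping in the exceptional (and $\GammaL(1,p^2)$ metacyclic) cases: one must be exhaustive about which small linear groups $H$ on $\FF_p^2$ (or $\FF_p^n$) are actually transitive on non-zero vectors for which primes $p$, and compute $k(H)$ each time, being careful that $H$ is genuinely a Frobenius complement (Zassenhaus' classification of those is the natural tool and overlaps heavily with Hering's list restricted to fixed-point-free actions). A secondary subtlety is confirming that no $p$-elements live outside $F=\bO_p(G)$ — i.e. that $P=F$ is a normal Sylow — which I would get from: if $P\not\le F$ then $P\cap F=\bO_p(G)\cap P$ is a proper non-trivial subgroup of $P$, and elements of $P\setminus F$ are $p$-elements not conjugate to those in $F$ (their $G$-classes are distinguished by whether they lie in the normal subgroup $F$), contradicting $k_p(G)=2$; so $P\le F$, whence $P=F$ and $G=F\rtimes(G/F)$ splits by Schur–Zassenhaus. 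Once the list is walked, assembling cases (i)–(iii) is immediate.
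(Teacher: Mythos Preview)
The main gap is your reduction to $\bO_{p'}(G)=1$. You claim that passing to $G/\bO_{p'}(G)$ preserves $k_{p'}(G)$, but this is false: only the inequality $k_{p'}(G)\ge k_{p'}(G/\bO_{p'}(G))$ holds, and it is strict whenever $\bO_{p'}(G)\ne 1$. Your appeal to Fong--Reynolds is really the statement $l(B_0(G))=l(B_0(G/\bO_{p'}(G)))$ about the \emph{principal block}, whereas $k_{p'}(G)=|\IBr(G)|$ counts Brauer characters across \emph{all} blocks; and your ``central'' sub-claim already fails for $G=C_q\times(C_p\rtimes C_{p-1})$ with a prime $q\ne p$, which has $k_p(G)=2$ and $k_{p'}(G)=q(p-1)>p-1$, so lies in case (i), not (ii), even though $G/\bO_{p'}(G)\cong C_p\rtimes C_{p-1}$. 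This matters because conclusions (ii) and (iii) identify $G$ itself, not a quotient. The paper instead first treats $\bO_{p'}(G)=1$, and then for $N:=\bO_{p'}(G)\ne 1$ uses $k_{p'}(G)\ge 1+k_{p'}(G/N)$; this forces case (i) immediately unless $G/N\cong C_{11}^2\rtimes\SL(2,5)$, and excluding that residual possibility (that $k_{11'}(G)=10$ with $N\ne 1$) requires a genuine further argument analysing how $G$ can act on an elementary abelian $N$.

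A secondary issue: your assertion that $G=F\rtimes H$ is automatically a Frobenius group, i.e.\ that $H=G/F$ acts fixed-point-freely on $F\setminus\{1\}$, does not follow from transitivity (e.g.\ $\GL(n,p)$ is transitive on $\FF_p^{\,n}\setminus\{0\}$ but far from semiregular). The identity $k_{p'}(G)=k(H)$ that you need is nevertheless correct, but for the simpler reason that $F$ is the normal Sylow $p$-subgroup and hence lies in the kernel of every irreducible Brauer character. Since $H$ need not be a Frobenius complement, you cannot lean on Zassenhaus' classification in the case analysis. The paper uses Passman's classification of $p$-solvable doubly transitive groups (the correct hypothesis here, as $G$ is $p$-solvable), shows in the $\GammaL(1,p^n)$ branch that necessarily $\FF_{p^n}^\times\le H$ and hence $k(H)\ge (p^n-1)/n\ge p-1$ with equality only for $n=1$, and disposes of the finitely many Passman exceptions by direct computation, finding only the $p=11$ case.
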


\begin{proof}
We assume first that $\pcore_{p'}(G)=1$.
Then $P:=\pcore_p(G)\ne 1$ by the Hall--Higman lemma. Since every $p$-element is conjugate to an element of
$P$, $P$ must be a Sylow $p$-subgroup. Since $\bZ(P)\unlhd G$, it
follows that $P=\bZ(P)$ is elementary abelian. Moreover,
$\bC_G(P)=P$ and $G/P$ is a transitive linear group (on $P$). We
need to show that $k_{p'}(G)=k_{p'}(G/P)=k(G/P)\ge p-1$ excluding
the exceptional case. By Passman's classification~\cite{Passman},
$G/P$ is a subgroup of the semilinear group
\[\GammaL(1,p^n)\cong\FF_{p^n}^\times\rtimes\Aut(\FF_{p^n})\cong C_{p^n-1}\rtimes C_n\]
where $P\cong \FF_{p^n}$ or one of finitely many exceptions. We start with the first
case.
Since $\Aut(\FF_{p^n})$ fixes some $x\in P\setminus\{1\}$ in
the base field $\FF_p$, $\FF_{p^n}^\times$ must be contained in
$G/P$ (otherwise $G/P$ cannot be transitive on $P\setminus\{1\}$). Now
$G/P$ has at least $(p^n-1)/n\ge p-1$ conjugacy classes lying inside
$\FF_{p^n}^\times$. The equality here occurs if and only if $n=1$,
in which case $G$ is the Frobenius group $C_p\rtimes C_{p-1}$.

Now suppose that $G/P$ is one of the exceptions in Passman's list
(see \cite[Theorem~15.1]{habil} for detailed information). For $p=
3$ the claim reduces to $|G/P|\ge 3$ which is obviously true. The
remaining cases can be checked by computer. It turns out that
$G\cong C_{11}^2\rtimes\SL(2,5)$ with $p=11$ is the only exception.

Finally, suppose that $N:=\pcore_{p'}(G)\ne 1$. Since $k_p(G)=k_p(G/N)$, the above arguments apply to $G/N$.
Since at least one $p$-regular element lies in $N\setminus\{1\}$, we obtain
\[k_{p'}(G)\ge 1+k_{p'}(G/N)\ge p\]
unless $p=11$ and $G/N\cong C_{11}^2\rtimes\SL(2,5)$. Suppose in this case that $k_{11'}(G)=10$. Then all non-trivial elements of $N$ are conjugate in $G$. As before, $N$ must be an elementary abelian $q$-group for some prime $q\ne 11$.
Let $N\le M\unlhd G$ such that $M/N\cong C_{11}^2$. Then $G/M$ acts transitively on the $M$-orbits of $N\setminus\{1\}$. In particular, these $M$-orbits have the same size. Since the non-cyclic group $M/N$ cannot act fixed point freely on $N$, all $M$-orbits have size $1$ or $11$. In the second case, $(|N|-1)/11$ divides $|G/M|=120$. This leaves only the possibility that $N$ is cyclic of order $q\ge 23$. But then $G/\bC_G(N)$ is cyclic and we derive the contradiction $G=G'N\le \bC_G(N)$.

It remains to deal with the case where $M$ acts trivially on $N$. Here we may go over to $\overline{G}:=G/\pcore_{11}(G)$ such that $k(\overline{G})=k_{11'}(G)=10$. Since $\overline{G}$ acts transitively on $\overline{N}\setminus\{1\}$, we obtain that $|\overline{N}|-1$ divides $|\overline{G}/\overline{N}|=|G/M|=120$.
Since $\overline{G}/\bC_{\overline{G}}(\overline{N})\in\{\SL(2,5),A_5\}$, this leaves the possibilities $|\overline{N}|\in\{2^4,5^2\}$.
Now it can be checked by computer that there is no (perfect) group with these properties.
\end{proof}

Apart from finitely many exceptions, the proof actually shows that
$k_{p'}(G)\ge\frac{p^n-1}{n}$ where $|G|_p=p^n$.

The following result provides a bound for $k_{p'}(G)$ in
$p$-solvable groups with three conjugacy classes of $p$-elements.

\begin{theorem}\label{psolv2}
Let $G$ be a $p$-solvable group with $k_p(G)=3$. Then $k_{p'}(G)\ge
(p-1)/2$ with equality if and only if $p>2$ and $G$ is the Frobenius
group $C_p\rtimes C_{(p-1)/2}$.
\end{theorem}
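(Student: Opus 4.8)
The plan is to run the argument of the proof of Theorem~\ref{psolv}, replacing ``transitive linear group'' by ``linear group with at most two orbits on the nonzero vectors''. I would first clear away the easy cases and reductions. If $p\le 3$, then $(p-1)/2\le 1$ and $k_{p'}(G)\ge 1$ is automatic; moreover $k_{p'}(G)=1$ forces $G$ to be a $p$-group, so for $p=3$ equality means $G$ is a $3$-group with $k(G)=3$, i.e.\ $G\cong C_3=C_3\rtimes C_1$. So assume $p\ge 5$. If $N:=\pcore_{p'}(G)\ne 1$, then $k_p(G/N)=3$, so by induction on $|G|$ we have $k_{p'}(G/N)\ge(p-1)/2$; since a nontrivial element of $N$ is $p$-regular and not conjugate to $1$, $k_{p'}(G)\ge 1+k_{p'}(G/N)>(p-1)/2$, so no equality occurs here. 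Hence assume $\pcore_{p'}(G)=1$; then $P:=\pcore_p(G)\ne 1$ and $\bC_G(P)\le P$ by the Hall--Higman lemma.

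Next I would reduce the problem to a question about linear groups. A normal subgroup that contains one element of a $G$-class contains the whole class, so each of the two nontrivial $p$-classes is either disjoint from $\bZ(P)$ or contained in it, and likewise for $P$ itself; as $\bZ(P)\ne 1$, at least one of them lies inside $\bZ(P)$. Running through the possibilities --- both nontrivial $p$-classes inside $\bZ(P)$, or exactly one, and in the latter case whether the remaining one lies in $P$ --- and using, as in the proof of Theorem~\ref{psolv}, that then all $p$-elements lie in $P$, I expect to conclude that $P$ is an elementary abelian Sylow $p$-subgroup with $\bC_G(P)=P$. Then $H:=G/P$ is a solvable $p'$-group acting faithfully on $V:=P\cong\FF_p^d$ with exactly two orbits on $V\setminus\{0\}$, and $k_{p'}(G)=k_{p'}(G/P)=k(H)$ because $P$ is a normal Sylow subgroup. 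One notes further that $H$ is irreducible on $V$: a proper nonzero $H$-invariant subspace $W$ would force both $W\setminus\{0\}$ and $V\setminus W$ to be single $H$-orbits, but $p\mid |V\setminus W|$, so the $p'$-group $H$ cannot be transitive on $V\setminus W$.

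It remains to show $k(H)\ge(p-1)/2$, with equality only for $(d,H)=(1,C_{(p-1)/2})$. Here I would adjoin scalars, putting $\tilde H:=H\FF_p^\times\le\GL(V)$. Since $\FF_p^\times$ is central in $\tilde H$, one has $k(\tilde H)=[\tilde H:H]\cdot k(H)$ with $[\tilde H:H]=[\FF_p^\times:\FF_p^\times\cap H]\le p-1$, while $\tilde H$ still has at most two orbits on $V\setminus\{0\}$, one of them of size $\ge(p^d-1)/2$, whence $|H|\ge(p^d-1)/2$. If $\tilde H$ is transitive on $V\setminus\{0\}$, then $\tilde H$ is a solvable transitive linear group, so by Passman's classification~\cite{Passman} (cf.\ \cite[Theorem~15.1]{habil}) either $\tilde H\le\GammaL(1,p^d)$ or $\tilde H$ is one of finitely many exceptions; in the first case $\FF_{p^d}^\times\le\tilde H$, counting the $\tilde H$-classes inside the cyclic normal subgroup $\FF_{p^d}^\times$ gives $k(\tilde H)\ge(p^d-1)/d$, and $[\tilde H:H]\le|\GammaL(1,p^d)|/|H|\le 2d$, so
\[
k(H)=\frac{k(\tilde H)}{[\tilde H:H]}\ \ge\ \frac{p^d-1}{2d^2}\ \ge\ \frac{p-1}{2},
\]
the last step because $1+p+\dots+p^{d-1}\ge d^2$, with equality only when $d=1$; then $\tilde H=\FF_p^\times$, $[\FF_p^\times:H]=2$, $H\cong C_{(p-1)/2}$ and $G\cong C_p\rtimes C_{(p-1)/2}$ is the Frobenius group. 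If instead $\tilde H$ has exactly two orbits on $V\setminus\{0\}$, then, since $H\le\tilde H$ and both have two orbits, these orbits are $\tilde H$-stable, hence $\FF_p^\times$-stable, so they are unions of punctured lines and $\tilde H/\FF_p^\times$ has two orbits on $\PP(V)$; combining this with $|\tilde H|\ge(p^d-1)/2$ and the structure theory of solvable irreducible linear groups (semilinear, imprimitive, tensor-induced and primitive cases) should restrict $\tilde H$ to a short list of families, in each of which $k(H)=k(\tilde H)/[\tilde H:H]>(p-1)/2$ follows by a direct count of conjugacy classes. The finitely many sporadic exceptions (Passman's list and its two-orbit counterpart) are then checked by computer.

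The hard part, I expect, is the linear-group input of the third paragraph: a usable two-orbit analogue of Passman's theorem describing the solvable $p'$-subgroups of $\GL(d,p)$ with at most two orbits on the nonzero vectors, together with the family-by-family lower bounds for $k(H)$ and the disposal of the sporadic small cases. A secondary, more routine, obstacle is the reduction identifying $P$ as an elementary abelian Sylow subgroup, in particular the subcase in which a class of $p$-elements avoids $\bZ(P)$.
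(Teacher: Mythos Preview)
Your overall architecture---reduce to $\pcore_{p'}(G)=1$, pass to $H=G/P$ acting on $V=P$ with two nonzero orbits, then classify---matches the paper's. The key divergence is in the classification step. The ``two-orbit analogue of Passman's theorem'' that you flag as the hard part already exists: $P\rtimes H$ is an affine \emph{primitive permutation group of rank $3$}, and these are classified by Liebeck~\cite{Liebeckrk3}. The paper simply invokes this. Liebeck's list gives exactly the case split you anticipate (semilinear, imprimitive on $P_1\times P_2$, tensor product $P_1\otimes P_2$, infinite families with nonsolvable stabiliser, finitely many exceptions), and most cases die quickly: the tensor case is killed by a $p$-divisibility obstruction, the nonsolvable families (A4)--(A11) are irrelevant since $H$ is solvable, and the imprimitive case is handled by applying Theorem~\ref{psolv} to the transitive factor $K/\bC_H(P_1)$ and noting that fusion under $|H:K|=2$ can at worst halve the class count. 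So your proposed case analysis is right, but you should cite Liebeck rather than redevelop it.

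Two smaller points. First, the reduction to $P$ being Sylow does \emph{not} go through as you sketch it: when a nontrivial $p$-class lies outside $P$, one cannot conclude $P\in\Syl_p(G)$. The paper does not try; instead it observes that then $k_p(G/P)=2$ (since at least one nontrivial $p$-class lies inside $P\ne1$ and collapses to the identity in $G/P$), applies Theorem~\ref{psolv} to $G/P$ to get $k_{p'}(G)=k_{p'}(G/P)\ge p-2>(p-1)/2$, and moves on. Second, your scalar-extension trick $\tilde H=H\FF_p^\times$ and the identity $k(\tilde H)=[\tilde H:H]\,k(H)$ are correct (the central factor makes $\tilde H$-conjugacy coincide with $H$-conjugacy on each coset), but the paper avoids this detour: in the semilinear case it bounds $|C|$ directly for $C:=H\cap\FF_{p^n}^\times$ by noting that the Galois part fixes one $C$-orbit on $P\setminus\{1\}$ and merges at most $n$ of the rest, forcing $|C|\ge(p^n-1)/(n+1)$ and hence at least $(p^n-1)/(n^2+n)\ge(p-1)/2$ classes inside $C$. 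Your route via $\tilde H$ gives a comparable bound but with more bookkeeping, and in the exceptional Passman cases you would still need to recover $k(H)$ from $k(\tilde H)$.
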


\begin{proof}
As in the proof of Theorem~\ref{psolv} we start by assuming $\pcore_{p'}(G)=1$.
Since the claim is easy to show for $p\le 5$, we may assume that $p\ge 7$ in the following.

Let $P:=\pcore_p(G)\ne 1$ and $H:=G/P$. Suppose first that $|H|$ is divisible by $p$. Then
$k_p(H)=2$ and
\[k_{p'}(G)=k_{p'}(H)\ge p-2>\frac{p-1}{2}\]
by Theorem~\ref{psolv}. Now let $H$ be a $p'$-group. Suppose that $P$
possesses a characteristic subgroup $1<Q<P$. Then $P\setminus Q$ must
be an $H$-orbit and therefore $|P\setminus Q|$ is not divisible by
$p$. This is clearly impossible. Hence, $P$ is elementary abelian
and $G\cong P\rtimes H$ is an affine primitive permutation group of
rank $3$ (i.\,e. a point stabilizer has three orbits on $P$). These
groups were classified by Liebeck~\cite{Liebeckrk3}.

Let $|P|=p^n$.
Suppose first that $H\le\GammaL(1,p^n)$. Then $H$ contains a
semiregular normal subgroup $C\le\FF_{p^n}^\times$. Clearly, $C$ has
exactly $\frac{p^n-1}{|C|}$ orbits on $P\setminus\{1\}$ each of
length $|C|$. Moreover, $\Aut(\FF_{p^n})$ fixes one of these orbits
and can merge at most $n$ of the remaining. Hence, $|C|+n|C|\ge
p^n-1$ and $|C|\ge\frac{p^n-1}{1+n}$. Now there are at least
$\frac{p^n-1}{n+n^2}$ conjugacy classes of $H$ lying
inside $C$. Since
\[\frac{p^n-1}{p-1}\ge 1+p+\ldots+p^{n-1}\ge 1+2+\ldots+2^{n-1}=2^n-1\ge\frac{n(n+1)}{2},\]
we obtain $k_{p'}(H)\ge\frac{p-1}{2}$ with equality if only if $n=1$ and $G\cong
C_p\rtimes C_{(p-1)/2}$.

Now assume that $H$ acts imprimitively on $P=P_1\times P_2$
interchanging $P_1$ and $P_2$. Then $K:=\bN_H(P_1)=\bN_H(P_2)\unlhd
H$ and $K/\bC_H(P_1)$ is a transitive linear group on $P_1$.
Theorem~\ref{psolv} yields $k(K)\ge k(K/\bC_H(P_1))\ge p-2$. Since
$|H:K|=2$, the conjugacy classes of $K$ can only fuse in pairs in
$H$. This leaves at least $1+\frac{p-3}{2}=\frac{p-1}{2}$ conjugacy
classes of $H$ inside $K$ and there is at least one more class
outside $K$. Altogether, $k(H)\ge\frac{p+1}{2}$.

Next suppose that $P=P_1\otimes P_2$ considered as $\FF_q$-spaces
where $q^a=p^n$ and $H$ stabilizes $P_1$ and $P_2$. Here $|P_1|=q^2$
and $|P_2|=q^d\ge q^2$. By \cite[Lemma~1.1]{Liebeckrk3}, $H$ has an
orbit of length $(q^d-1)(q^d-q)$, but this is impossible since $H$
is a $p'$-group.

The cases (A4)--(A11) in Liebeck~\cite{Liebeckrk3} are not
$p$-solvable. Cases (B) and (C) are finitely many exception. Suppose
that $p=7$ and $k(H)\le 3$. It is well-known that then $H\le S_3$
and therefore $|P|\le 1+6+6$. It follows that $n=1$ and $G\cong
C_7\rtimes C_3$. Hence, let $p\ge 11$. From \cite{Liebeckrk3} we
obtain $|P|\le 89^2$. Since the primitive permutation groups of
degree at most $2^{12}-1$ are available in GAP~\cite{GAP48}, we may
assume that $p\ge 67$. There are only three cases left, namely
$p\in\{71,79,89\}$ and $n=2$. Here $A_5\le H/\bZ(H)$. Since $A_5$ is
a maximal subgroup of $\PSL(2,p)$ (see
\cite[Hauptsatz~II.8.27]{Huppert}), it follows that
$H\cap\SL(2,p)=\SL(2,5)$. Consequently,
\[C:=H/\SL(2,5)\le\GL(2,p)/\SL(2,p)\cong C_{p-1}.\]
Since $H$ has an orbit of length at least $(p^2-1)/2$, we obtain
$120|C|=|H|\ge(p^2-1)/2$. This yields $k(H)\ge1+|C|>(p-1)/2$ unless
$p=79$ and $|C|=26$. In this exception, $H=\SL(2,5).2\times C_{13}$
and obviously $k(H)\ge 3\cdot 13=(p-1)/2$.

Finally, suppose that $N:=\pcore_{p'}(G)\ne 1$. Then the above arguments apply to $G/N$ and we obtain
\[k_{p'}(G)\ge 1+k_{p'}(G/N)>\frac{p-1}{2}\]
since at least one non-trivial $p$-regular element lies in $N$.
\end{proof}

We remark that the $p$-solvability assumption in
Theorem~\ref{psolv2} will be removed in Section~\ref{sec:kp(G)=3}.


\section{The blockwise Alperin weight conjecture}\label{section-Alperin}

In this section, we will explain that, when the Sylow $p$-subgroups
of $G$ are cyclic, the main result Theorem~\ref{main-l(Bo)>p-1} (and
also Theorem~\ref{main-kp(G)=3}) is a consequence of the known
cyclic Sylow case of the blockwise Alperin weight (BAW) conjecture
and the $p$-solvable results proved in the previous section.

Let $B$ be a $p$-block of $G$. Recall that $l(B)$ denotes the number
of irreducible Brauer characters of $B$. A $p$-weight for $B$ is a
pair $(Q,\lambda)$ of a $p$-subgroup $Q$ of $G$ and an irreducible
$p$-defect zero character $\lambda$ of $\bN_G(Q)/Q$ such that the
lift of $\lambda$ to $\bN_G(Q)$ belongs to a block which induces the
block $B$. The BAW conjecture claims that $l(B)$ is equal to the
number of $G$-conjugacy classes of $p$-weights of $B$. In
particular, the conjecture implies that $l(B)\geq l(b)$, where $b$
is the Brauer correspondent of $B$ (see
\cite[Consequence~1]{Alperin}). In fact, when a defect group of $B$
is abelian, the conjecture is equivalent to $l(B)=l(b)$ (see
\cite[Consequence~2]{Alperin}).

Let $P\in\Syl_p(G)$, and let $B_0$ and $b_0$ be respectively the
principal blocks of $G$ and $\bN_G(P)$. Assume that the BAW
conjecture holds for $(G,p)$.
Since $\bN_G(P)$ is $p$-solvable, \cite[Theorems~9.9 and 10.20]{Navarro}
show that
\[
l(B_0)\geq l(b_0)=k_{p'}(\bN_G(P)/\mathbf{O}_{p'}(\bN_G(P)))=k(\bN_G(P)/P\bC_G(P)).
\]

By Burnside's fusion argument (see \cite[Lemma 5.12]{Isaac}),
$H:=\bN_G(P)/P\bC_G(P)$ controls fusion in $Z:=\bZ(P)$. In
particular, $k_p(Z\rtimes H)\le k_p(G)$.

Combining the above analysis with the results of the previous
section, we deduce that, if $k_p(G)=2$ then $k_p(Z\rtimes H)=2$ and
$l(B_0)\geq p-1$ or $p=11$ and
$\bN_G(P)/\mathbf{O}_{p'}(\bN_G(P))\cong C^2_{11} \rtimes\SL(2,5)$.
Similarly, if $k_p(G)=3$ then $l(B_0)\geq (p-1)/2$, and thus
$k_{p'}(G)\geq (p-1)/2$. Also, when $k_p(G)=2$, $l(B_0)=p-1$ if and
only if $k_{p'}(\bN_G(P)/\mathbf{O}_{p'}(\bN_G(P)))=p-1$, which
occurs if and only if $\bN_G(P)/\mathbf{O}_{p'}(\bN_G(P))$ is
isomorphic to the Frobenius group $C_p\rtimes C_{{p-1}}$, by
Theorem~\ref{psolv}.

We therefore have the following, which was already mentioned in the
introduction.

\begin{proposition}
Let $p$ be a prime and $G$ a finite group with $k_p(G)=3$. Let $B_0$
be the principal block of $G$. Then the blockwise Alperin weight
Conjecture (for $B_0$) implies that $l(B_0)\geq (p-1)/2$.
\end{proposition}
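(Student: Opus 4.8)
The plan is to chain together the three ingredients already assembled in this section: the cyclic-Sylow case of the BAW conjecture, the consequences of the conjecture for principal blocks with abelian defect, and the $p$-solvable bound of Theorem~\ref{psolv2}. Concretely, let $P\in\Syl_p(G)$ and let $B_0$, $b_0$ be the principal $p$-blocks of $G$ and $\bN_G(P)$ respectively. Assuming BAW holds for $B_0$, the displayed inequality above gives
\[
l(B_0)\ge l(b_0)=k_{p'}\bigl(\bN_G(P)/\pcore_{p'}(\bN_G(P))\bigr)=k\bigl(\bN_G(P)/P\bC_G(P)\bigr),
\]
so it suffices to bound $k(H)$ from below, where $H:=\bN_G(P)/P\bC_G(P)$.

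The next step is to transfer the hypothesis $k_p(G)=3$ to the group $Z\rtimes H$, where $Z:=\bZ(P)$. By Burnside's fusion argument $H$ controls $G$-fusion in $Z$, so two elements of $Z$ are $G$-conjugate if and only if they are $(Z\rtimes H)$-conjugate; hence $k_p(Z\rtimes H)\le k_p(G)=3$. Now $Z\rtimes H$ is $p$-solvable (indeed $H$ is a $p'$-group acting on the $p$-group $Z$), so Theorem~\ref{psolv2} applies to it: either $k_{p'}(Z\rtimes H)\ge(p-1)/2$, or $k_p(Z\rtimes H)\le 2$ in which case Theorem~\ref{psolv} gives $k_{p'}(Z\rtimes H)\ge p-1$ (the case $k_p=1$ forcing $p\nmid|Z\rtimes H|$ is vacuous since $Z\ne 1$). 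In all cases $k_{p'}(Z\rtimes H)\ge(p-1)/2$. Finally, since $H$ is a $p'$-group and $Z\unlhd Z\rtimes H$ with $(Z\rtimes H)/Z\cong H$, one has $k(H)=k_{p'}((Z\rtimes H)/Z)\ge k_{p'}(Z\rtimes H)\ge(p-1)/2$ (every $p$-regular class of $Z\rtimes H$ maps to a class of $H$, surjectively, so the count only drops). Combining, $l(B_0)\ge k(H)\ge(p-1)/2$, which is the assertion.

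The only genuinely delicate point is making sure Theorems~\ref{psolv} and~\ref{psolv2} are being invoked under their correct hypotheses — in particular that $Z\rtimes H$ really does satisfy $k_p\le 3$ (not just $k_p=3$ on the nose) and that the degenerate cases $k_p(Z\rtimes H)\in\{1,2\}$ still yield a bound of at least $(p-1)/2$; these are handled as above, using that $Z\ne 1$ rules out $k_p=1$ and that the $k_p=2$ bound $p-1$ comfortably exceeds $(p-1)/2$. I expect no serious obstacle: all the structural input (cyclic Sylow BAW, $\bN_G(P)$ being $p$-solvable, the identification $l(b_0)=k(\bN_G(P)/P\bC_G(P))$, and fusion control) has already been recorded in the paragraphs preceding the proposition, so the argument is essentially a bookkeeping assembly of facts established earlier in the section.
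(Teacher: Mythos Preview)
Your approach is correct and matches the paper's: reduce to bounding $k(H)$ with $H=\bN_G(P)/P\bC_G(P)$, transfer the class-count hypothesis to $Z\rtimes H$ via Burnside, and invoke Theorems~\ref{psolv} and~\ref{psolv2}. Two small corrections are in order.

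First, your parenthetical justification for $k(H)\ge k_{p'}(Z\rtimes H)$ is backwards: surjectivity of the class map $\{p\text{-regular classes of }Z\rtimes H\}\to\{\text{classes of }H\}$ yields $k_{p'}(Z\rtimes H)\ge k(H)$, the opposite of what you need. The clean argument is that $Z$ is a normal $p$-subgroup of $Z\rtimes H$, hence lies in the kernel of every irreducible $p$-Brauer character, so $k_{p'}(Z\rtimes H)=k_{p'}\bigl((Z\rtimes H)/Z\bigr)=k(H)$ with equality (equivalently, the class map is a bijection, not merely a surjection).

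Second, when $k_p(Z\rtimes H)=2$, Theorem~\ref{psolv} does not quite give $k_{p'}(Z\rtimes H)\ge p-1$: there is the exceptional outcome $p=11$, $Z\rtimes H\cong C_{11}^2\rtimes\SL(2,5)$ with $k_{p'}=9$. Since $9\ge 5=(11-1)/2$, the bound $(p-1)/2$ still holds, but this case should be acknowledged.
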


\begin{proposition}
Let $p$ be a prime and $G$ a finite group of order divisible by $p$.
Let $B_0$ be the principal block of $G$. Then the blockwise Alperin
weight Conjecture (for $B_0$) implies that $l(B_0)\geq 2\sqrt{p-1}+1-k_p(G)$.
\end{proposition}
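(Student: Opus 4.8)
The plan is to reduce the assertion to \cite[Theorem~1.1]{Hung-Maroti} by passing to the Brauer correspondent, in the manner of the discussion preceding the statement. Assume that the BAW conjecture holds for $B_0$, fix $P\in\Syl_p(G)$, and let $b_0$ be the principal block of $\bN_G(P)$. Then $l(B_0)\ge l(b_0)$ by \cite[Consequence~1]{Alperin}, and since $\bN_G(P)$ is $p$-solvable we have, as recorded in Section~\ref{section-Alperin},
\[
l(B_0)\ \ge\ l(b_0)\ =\ k_{p'}\bigl(\bN_G(P)/\mathbf{O}_{p'}(\bN_G(P))\bigr)\ =\ k(H),\qquad\text{where }H:=\bN_G(P)/P\bC_G(P).
\]
Hence it suffices to bound $k(H)$ below by $2\sqrt{p-1}+1-k_p(G)$.

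First I would rewrite $k(H)$ in terms of $p$-regular classes. Since $P$ is a Sylow $p$-subgroup of $\bN_G(P)$ and is normal there, $\bN_G(P)/P$ is a $p'$-group, hence so is its quotient $H$; thus $k(H)=k_{p'}(H)$. Now put $Z:=\bZ(P)$ and form the semidirect product $Z\rtimes H$ with respect to the natural action of $H$ on $Z$ (this action is well defined because $\bN_G(P)$ acts on the characteristic subgroup $Z\unlhd P$ with $P\bC_G(P)$ contained in the kernel), exactly the group considered in Section~\ref{section-Alperin}. Since $Z$ is a normal $p$-subgroup of $Z\rtimes H$, factoring it out changes neither the number of irreducible Brauer characters nor, equivalently, the number of $p$-regular classes, so
\[
k_{p'}(Z\rtimes H)\ =\ l(Z\rtimes H)\ =\ l(H)\ =\ k_{p'}(H)\ =\ k(H).
\]

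Finally I would invoke Burnside's fusion argument together with \cite[Theorem~1.1]{Hung-Maroti}. As noted in Section~\ref{section-Alperin}, $H$ controls $G$-fusion in $Z$, which forces $k_p(Z\rtimes H)\le k_p(G)$. The order of $Z\rtimes H$ is divisible by $p$ (as $Z\ne 1$), so \cite[Theorem~1.1]{Hung-Maroti} applies to it and gives
\[
k(H)\ =\ k_{p'}(Z\rtimes H)\ \ge\ 2\sqrt{p-1}+1-k_p(Z\rtimes H)\ \ge\ 2\sqrt{p-1}+1-k_p(G).
\]
Combining this with the first display yields $l(B_0)\ge 2\sqrt{p-1}+1-k_p(G)$.

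There is no genuine obstacle here: all of the arithmetic is packaged inside \cite[Theorem~1.1]{Hung-Maroti}, and the reduction only uses the BAW consequence $l(B_0)\ge l(b_0)$ together with the two elementary observations that $k_{p'}$ is insensitive to a normal $p$-subgroup and that $\bN_G(P)/P$ is a $p'$-group; the one point needing a little care, namely the identification $l(b_0)=k(H)$ and the legitimacy of the group $Z\rtimes H$, was already carried out in Section~\ref{section-Alperin}. In contrast with Theorem~\ref{main-l(Bo)>p-1}, one should not attempt to analyse equality: equality in \cite[Theorem~1.1]{Hung-Maroti} already requires $p-1$ to be a perfect square, so the proposed bound is not expected to be optimal in general.
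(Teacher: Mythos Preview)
Your proof is correct and follows precisely the approach of the paper: the paper's own proof is the single line ``This follows from the above analysis and \cite[Theorem 1.1]{Hung-Maroti}'', where the ``above analysis'' is exactly the reduction via $l(B_0)\ge l(b_0)=k(H)$ and the application of Burnside fusion to the group $Z\rtimes H$ that you have spelled out. The additional details you supply (that $H$ is a $p'$-group, that $k_{p'}$ is unchanged upon factoring out the normal $p$-subgroup $Z$, and that $Z\ne 1$ so that \cite[Theorem~1.1]{Hung-Maroti} applies) are all correct and implicit in the paper's discussion.
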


\begin{proof}
This follows from the above analysis and \cite[Theorem
1.1]{Hung-Maroti}.
\end{proof}

We have seen that Theorem \ref{main-l(Bo)>p-1} holds for $(G,p)$ if
the BAW conjecture holds for $(G,p)$. In particular, by Dade's
results \cite{Dade66} on blocks with cyclic defect groups, we have
proved the main results for groups with cyclic Sylow $p$-subgroups.

We end this section by another consequence of the BAW conjecture on
possible values of $k(B)$ and $l(B)$ in blocks with $k(B)-l(B)=1$.
In the following theorem we make use of Jordan's totient function
$J_2:\NN\to\NN$ defined by
\[J_2(n):=n^2\prod_{p\mid n}\frac{p^2-1}{p^2}\]
where $p$ runs through the prime divisors of $n$ (compare with the definition of Euler's function $\phi$).

\begin{theorem}\label{thm:k-l=1}
Let $B$ be a $p$-block of a finite group $G$ with defect $d$ such
that $k(B)-l(B)=1$. Suppose that $B$ satisfies the Alperin weight
Conjecture. Then one of the following holds:

\begin{enumerate}[\rm(i)]
\item\label{dickson}
$d=nk$ such that all prime divisors of $n$ divide $p^k-1$. Moreover,
if $4$ divides $n$, then $4$ divides $p^k-1$. Here
\[l(B)=\sum_{e\mid n}\frac{p^{ek}-1}{ne}J_2(n/e).\]
In particular, $l(B)=p^d-1$ if $n=1$ and $l(B)>(p^k-1)\phi(n)+\frac{p^d-1}{n^2}$ if $n>1$.

\item\label{excep}
\[\begin{array}{c|ccccccc}
p^d&5^2&7^2&11^2&11^2&23^2&29^2&59^2\\\hline
l(B)&7&8&9&35&88&63&261
\end{array}\]
\end{enumerate}
Conversely, all values for $l(B)$ given in (i) and (ii) do occur in
examples.
\end{theorem}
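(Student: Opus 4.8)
The plan is to combine Brauer's formula for $k(B)$ with the Alperin weight Conjecture to force the local structure of $B$, recognize that structure as a finite near-field, and then invoke Zassenhaus's classification.

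\emph{Local structure.} Brauer's formula gives $k(B)=\sum_{(u,b_u)}l(b_u)$, the sum over representatives of the $G$-classes of $B$-subsections, with the trivial subsection $(1,B)$ contributing $l(B)$. Hence $k(B)-l(B)=1$ means there is a unique class of non-trivial $B$-subsections $(u,b_u)$, and $l(b_u)=1$. As in the discussion before Corollary~\ref{cor-main} (all non-trivial $p$-elements lying in the defect group are $G$-conjugate, so $D$ has exponent $p$, and a proper non-trivial characteristic subgroup of $D$ would give a second subsection), $D$ is elementary abelian, say $|D|=p^d$. Writing $E=\bN_G(D,b_D)/\bC_G(D)\le\GL_d(p)$ for the inertial quotient, the $B$-subsections correspond to the $E$-orbits on $D$, so $E$ has a single non-trivial orbit, i.e. $E$ is transitive on $D\setminus\{1\}$. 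Applying the Alperin weight Conjecture to $b_u$ (which again has abelian defect group $D$) identifies $l(b_u)$ with $l$ of its Brauer correspondent in $\bC_{\bN_G(D)}(u)$, a $p$-solvable group with inertial quotient $\bC_E(u)$; together with $l(b_u)=1$ this forces $\bC_E(u)=1$. So $E$ acts \emph{regularly} on $D\setminus\{1\}$, $|E|=p^d-1$, and $D\rtimes E$ is a sharply $2$-transitive group of degree $p^d$.

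\emph{Near-fields, Zassenhaus, and the value of $l(B)$.} A finite sharply $2$-transitive group of degree $p^d$ has a regular elementary abelian normal subgroup, and equipping that subgroup with the induced product turns $D$ into a finite near-field of order $p^d$ with multiplicative group $E$. By Zassenhaus's classification, either $D$ is the Dickson near-field of a Dickson pair $(q,n)=(p^k,n)$ --- which is exactly the arithmetic in (i): $d=nk$, every prime divisor of $n$ divides $p^k-1$, and $4\mid n$ forces $4\mid p^k-1$ --- or $D$ is one of the seven exceptional near-fields, of orders $5^2,7^2,11^2,11^2,23^2,29^2,59^2$. Since $D$ is abelian, the Alperin weight Conjecture gives $l(B)=l(b_0)$ for the Brauer correspondent $b_0$ in $\bN_G(D)$; analyzing this block of the $p$-solvable group $\bN_G(D)$, using that $E$ acts freely on $D$ (so the relevant Fong--Reynolds stabilizer is all of $E$ and the associated $2$-cocycle vanishes), yields $l(b_0)=k(E)$. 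In the Dickson case $E$ is the metacyclic subgroup $E_{q,n}\le\GammaL(1,q^n)$ of $\FF_{q^n}^\times$-multiplications twisted by powers of the $q$-Frobenius; counting its conjugacy classes by their ``Galois degree'' $e\mid n$ gives $k(E_{q,n})=\sum_{e\mid n}\frac{q^e-1}{ne}J_2(n/e)=\sum_{e\mid n}\frac{p^{ek}-1}{ne}J_2(n/e)$, which for $n=1$ is $p^d-1$ (then $E$ is cyclic), and for $n>1$ exceeds $(p^k-1)\phi(n)+\frac{p^d-1}{n^2}$ by isolating the $e=1$ summand (using $J_2(n)/n=\phi(n)\prod_{p\mid n}(1+1/p)>\phi(n)$) and the $e=n$ summand. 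For the seven exceptional near-fields one identifies $E$ with $\SL(2,3)$, the binary octahedral group $2.\Sy_4\cong\GL(2,3)$, $\SL(2,5)$, $\SL(2,3)\times C_5$, $2.\Sy_4\times C_{11}$, $\SL(2,5)\times C_7$ and $\SL(2,5)\times C_{29}$, with $k(E)=7,8,9,35,88,63,261$, matching the table in (ii).

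\emph{The converse.} For each near-field above, take $G:=D\rtimes E$, a Frobenius group (so $G$ is $p$-solvable and the Alperin weight Conjecture holds), and let $B_0$ be its principal block. Every simple $kG$-module has $\pcore_p(G)=D$ in its kernel, so $l(G)=k(E)$, while $k(G)=k(E)+1$ because $E$ has a single non-trivial orbit on $\Irr(D)$; applying Brauer's formula to each block of $G$ (each of full defect $D$, hence each with a non-trivial subsection) then forces $kG$ to consist of a single block, so $k(B_0)-l(B_0)=1$ and $l(B_0)=k(E)$ realizes the value. The main obstacle is the middle step: confirming that $E$ really acts regularly (equivalently that $l(b_u)=1$ is incompatible with $\bC_E(u)\ne1$) and that the $2$-cocycle attached to $b_0$ is trivial, so that $l(B)=l(b_0)=k(E)$ on the nose, together with the conjugacy-class bookkeeping in the Dickson family and in all seven Zassenhaus exceptions.
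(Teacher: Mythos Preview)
Your overall plan is exactly the paper's: reduce to $D$ elementary abelian with inertial quotient $E$ regular on $D\setminus\{1\}$, invoke Zassenhaus's near-field classification, and then compute $k(E)$. Your Dickson class-count, your explicit identification of the seven exceptional multiplicative groups, and your converse argument are all correct and match the paper.

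Two of the justifications in your ``local structure'' paragraph are not solid, however. First, the claim that a proper non-trivial characteristic subgroup of $D$ forces a second subsection class is not automatic: block fusion systems need not preserve characteristic subgroups of $D$ (in $\GL(3,p)$, for instance, central and non-central elements of the unitriangular Sylow are conjugate), so this step genuinely requires a structural theorem --- the paper cites \cite[Theorem~7.1]{HKKS} for $D$ elementary abelian. Second, and more seriously, you deduce regularity of $E$ by applying the Alperin weight conjecture to the subsection block $b_u$, but the hypothesis grants AWC only for $B$, not for blocks of $\bC_G(u)$; the paper obtains regularity directly from $l(b_u)=1$ (again leaning on \cite{HKKS}). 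Finally, the reason you give for the $2$-cocycle on $E$ vanishing is off: it is not that ``the Fong--Reynolds stabilizer is all of $E$'', but rather that a group acting freely on $D\setminus\{1\}$ has every Sylow subgroup cyclic or generalized quaternion, hence trivial Schur multiplier --- this is the paper's argument, and it is precisely what converts AWC for $B$ into the equality $l(B)=k(E)$.
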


\begin{proof}
By \cite[Theorem~7.1]{HKKS}, $B$ has an elementary abelian defect group $D$. The equation $k(B)-l(B)=1$ implies further that the inertial quotient $E$ of $B$ acts regularly on $D\setminus\{1\}$. It follows that all Sylow subgroups of $E$ are cyclic or quaternion groups. In particular, $E$ has trivial Schur multiplier. Hence, the Alperin weight conjecture asserts that $l(B)=k(E)$ (see \cite[Conjecture 2.6]{habil} for instance). Note that $D\rtimes E$ is a sharply $2$-transitive group on $D$ and those were classified by Zassenhaus~\cite{Zassenhaus1} (see also \cite[Section~7.6]{DM}). Apart from the seven exceptions described in \eqref{excep},
$D\rtimes E$ arises from a Dickson near-field $F$ where $(F,+)\cong D$ and $F^\times\cong E$. More precisely, there exists a factorization $d=nk$ as in \eqref{dickson} such that $F$ can be identified with $\FF_{q^n}$ where $q=p^k$ and the multiplication is modified as follows. Let $\FF_{q^n}^\times=\langle\zeta\rangle$. Let $\gamma:\FF_{q^n}\to\FF_{q^n}$, $x\mapsto x^q$ be the Frobenius automorphism of $\FF_{q^n}$ with respect to $\FF_q$. The hypotheses imply (with some effort) that $q$ has multiplicative order $n$ modulo $(q-1)n$. Hence, for every integer $a$ there exists a unique integer $a^*$ such that $0\le a^*<n$ and
\[q^{a^*}\equiv 1+a(q-1)\pmod{(q-1)n}.\]
It is easy to check that $\Gamma:\FF_{q^n}^\times\to\langle\gamma\rangle$, $\zeta^a\mapsto \gamma^{a^*}$ is an epimorphism.
We define
\[F^\times:=\bigl\{(\zeta^a,\gamma^{a^*}):0\le a<q^n-1\bigr\}\le\FF_{q^n}^\times\rtimes\langle\gamma\rangle=\GammaL(1,q^n).\]
Note that $F^\times$ is just the Singer cycle $\FF_q^\times$ if $n=1$. Although different choices for $\zeta$ may lead to non-isomorphic near-fields, the group $F^\times$ is certainly uniquely defined (as a subgroup of $(\ZZ/(q^n-1)\ZZ)\rtimes(\ZZ/n\ZZ)$ for instance).

It is easy to check that $A:=\langle(\zeta^n,1)\rangle=\Ker\Gamma\unlhd F^\times$ and $F^\times/A\cong C_n$. This makes it possible to compute $k(E)=k(F^\times)$ via Clifford theory with respect to $A$. The natural actions of $F^\times$ on $A$ and on $\Irr(A)$ are permutation isomorphic, by Brauer's permutation lemma. Thus, instead of counting characters of $A$ with a specific order we may just count elements. For a divisor $e\mid n$, let $\alpha(e)$ be the number of elements in $F^\times\cap\FF_{q^e}$ which do not lie in any proper subfield of $\FF_{q^e}$. Then
\[\beta(e):=|F^\times\cap\FF_{q^e}|=\frac{q^e-1}{e}=\sum_{f\mid e}\alpha(f).\]
By M\"obius inversion we obtain
\[\alpha(e)=\sum_{f\mid e}\mu(e/f)\frac{q^f-1}{f}.\]
This is also the number of characters in $\Irr(A)$ with inertial index $e$. These characters distribute into $\alpha(e)/e$ orbits under $F^\times$. Each such character has $n/e$ distinct extensions to its inertial group and each such extension induces to an irreducible character of $F^\times$. The number of character of $F^\times$ obtained in this way is therefore $\alpha(e)n/e^2$. In total,
\[l(B)=k(E)=k(F^\times)=\sum_{e\mid n}\frac{n}{e^2}\sum_{f\mid e}\mu(e/f)\frac{q^f-1}{f}.\]
Now observe that $n^2=\sum_{d\mid n}J_2(d)$ for all $n\ge 1$. Hence, another M\"obius inversion yields
\[\sum_{e\mid n}\frac{n}{e^2}\sum_{f\mid e}\mu(e/f)\frac{q^f-1}{f}=\sum_{f\mid n}\frac{q^f-1}{fn}\sum_{e'\mid\frac{n}{f}}\Bigl(\frac{n}{e'f}\Bigr)^2\mu(e')=\sum_{f\mid n}\frac{q^f-1}{fn}J_2(n/f).\]
If $n>1$, then $n\phi(n)=n^2\prod_{p\mid n}\frac{p-1}{p}<J_2(n)$ and the second claim follows.

Conversely, if $d=nk$ satisfies the condition in \eqref{dickson}, then a corresponding near-field $F$ can be constructed as above. This in turn leads to a sharply $2$-transitive group $G=F\rtimes F^\times$. Now $G$ has only one block $B$, namely the principal block, and $l(B)=k(F^\times)$ is given as above.
\end{proof}


\section{Reduction for Theorem \ref{main-l(Bo)>p-1}}\label{sec:reduction}

In this section we prove Theorem~\ref{main-l(Bo)>p-1}, assuming a
result on bounding $l(B_0)$ for almost simple groups of Lie type
that will be proved in Section~\ref{sec:B0-almost-simple-gps}. We restate Theorem~\ref{main-l(Bo)>p-1} for the convenience of the reader.

\begin{theorem}\label{main-l(Bo)>p-1repeat}
Let $p$ be a prime and let $G$ be a finite group with $k_p(G)=2$.
Let $B_0$ be the principal $p$-block of $G$. Then $l(B_0)\geq p-1$ or $p=11$ and
$G/\pcore_{p'}(G)\cong C_{11}^2\rtimes\SL(2,5)$. Furthermore,
$l(B_0)=p-1$ if and only if $\bN_G(P)/\mathbf{O}_{p'}(\bN_G(P))$ is
isomorphic to the Frobenius group $C_p\rtimes C_{{p-1}}$.
\end{theorem}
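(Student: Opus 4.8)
The plan is to reduce Theorem~\ref{main-l(Bo)>p-1repeat} to a statement about almost simple groups of Lie type. First I would handle the easy reductions. Since $l(B_0)$ and $k_p(G)$ are both unchanged upon passing to $\overline{G}:=G/\pcore_{p'}(G)$ (the principal block, its Brauer characters, and the conjugacy classes of $p$-elements all descend to the quotient), we may assume $\pcore_{p'}(G)=1$. If $G$ is $p$-solvable, then Theorem~\ref{psolv} (together with the discussion in Section~\ref{section-Alperin} relating $l(B_0)$ to $k_{p'}(\bN_G(P)/\pcore_{p'}(\bN_G(P)))$ in the abelian Sylow case, and the cyclic-Sylow case of the BAW conjecture via Dade) gives the conclusion directly, including the equality characterization and the exceptional group $C_{11}^2\rtimes\SL(2,5)$. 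So from now on $G$ is not $p$-solvable.

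Next I would feed in the structural result of \cite{Kulshammer-Navarro-Sambale-Tiep}: if $k_p(G)=2$ then either the Sylow $p$-subgroup $P$ is elementary abelian, or $(G,p)$ lies in a short explicit list of exceptions (with non-abelian Sylow $p$-subgroups). The exceptional cases can be dispatched one by one, checking $l(B_0)\ge p-1$ by hand or computer. In the main case $P$ is elementary abelian, so $\bN_G(P)$ controls $G$-fusion in $P$ and hence $\bN_G(P)/\pcore_{p'}(\bN_G(P))$ also has exactly two conjugacy classes of $p$-elements; being $p$-solvable, Theorem~\ref{psolv} applies to it, and by the abelian-defect consequence of the BAW conjecture (Consequence~2 of \cite{Alperin}), $l(B_0)=l(b_0)=k_{p'}(\bN_G(P)/\pcore_{p'}(\bN_G(P)))$ \emph{provided} we know BAW for $(G,p)$. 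Since we do not know BAW in general, the real work is to prove the inequality $l(B_0)\ge p-1$ unconditionally.

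To do that I would use the classification of finite groups, via a minimal counterexample argument on $G$ (with $\pcore_{p'}(G)=1$, $G$ not $p$-solvable). Let $N$ be a minimal normal subgroup; since $\pcore_{p'}(G)=1$ and $G$ is not $p$-solvable, $N$ is a direct product of copies of a non-abelian simple group $S$ of order divisible by $p$. Because all non-trivial $p$-elements of $G$ are conjugate, one shows $N$ must be simple (several copies would produce non-conjugate $p$-elements supported on different factors) and that $G$ is almost simple with socle $S=N$: any further normal structure above $S$ with order divisible by $p$ would again break the single-class condition, and $\bC_G(N)$ is a $p'$-group normal in $G$, hence trivial. Now standard block-theoretic facts — $l(B_0(G))\ge l(B_0(S))$ when $S\trianglelefteq G$ with $G/S$ of order coprime to $p$, and more generally control via $\IBr$ of the simple group, using e.g. \cite[results quoted later]{} — let us bound $l(B_0(G))$ below in terms of $l(B_0(S))$. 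Splitting $S$ into alternating groups, sporadic groups, groups of Lie type in the defining characteristic, and groups of Lie type in non-defining characteristic: the alternating and sporadic cases are finite checks (and for $\Al_n$ one has $l(B_0)$ growing quickly in $p$); the defining-characteristic case of a group of Lie type is handled because there $l(B_0)$ equals the number of $p$-regular classes, which is large, and moreover such groups rarely have only two classes of $p$-elements. The bulk of the argument, and the main obstacle, is the non-defining-characteristic Lie type case, which I would isolate as a separate theorem (the one promised to be proved in Section~\ref{sec:B0-almost-simple-gps}) asserting $l(B_0)\ge p-1$ for almost simple groups of Lie type with $k_p(G)=2$; its proof will require case analysis by Lie type and rank, using Deligne--Lusztig theory, Jordan decomposition of Brauer characters, and known formulas/bounds for the number of modular characters in the principal block. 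Granting that theorem, the minimal counterexample cannot exist, so $l(B_0)\ge p-1$ holds unconditionally (with the lone genuine exception $p=11$, $\overline{G}\cong C_{11}^2\rtimes\SL(2,5)$ coming from the $p$-solvable analysis); and the equality case $l(B_0)=p-1$ forces, via the simple-group bounds being strict, that $G$ is $p$-solvable, whence Theorem~\ref{psolv} and the BAW/Dade reduction identify $\bN_G(P)/\pcore_{p'}(\bN_G(P))$ as the Frobenius group $C_p\rtimes C_{p-1}$, completing the proof.
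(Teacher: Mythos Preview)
Your overall architecture matches the paper's: reduce to $\pcore_{p'}(G)=1$, dispose of the non-abelian Sylow exceptions from \cite{Kulshammer-Navarro-Sambale-Tiep} by direct computation, show $G$ is almost simple with socle $S$ and $p\nmid|G/S|$, and defer the cross-characteristic Lie type case to Theorem~\ref{thm:B0-Lie-type}. However, several of your intermediate steps are either wrong or too vague to work as written.

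First, the claimed ``standard fact'' $l(B_0(G))\ge l(B_0(S))$ for $S\trianglelefteq G$ with $|G/S|$ coprime to $p$ is not standard and is not true in general: Brauer characters of $S$ can fuse under $G$, and the extensions need not all land in $B_0(G)$ (there can be several blocks of $G$ covering $B_0(S)$). The paper never reduces to the simple group; Theorem~\ref{thm:B0-Lie-type} is stated and proved for the almost simple group $G$ directly, using invariance and extendibility of unipotent characters together with a careful argument about which extensions lie in $B_0$. Second, your treatment of the alternating and defining-characteristic cases does not go through. For $S=\Al_n$ with non-cyclic Sylow you have $n\ge 2p$, and the paper simply observes that elements of cycle type $(p)$ and $(p,p)$ are non-conjugate in $\Aut(S)$, so $k_p(G)\ge 3$ and the case is vacuous; you instead propose to bound $l(B_0)$, which is unnecessary extra work. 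For $S$ of Lie type in characteristic $p$, your assertion that ``$l(B_0)$ equals the number of $p$-regular classes'' is false (there are blocks of defect zero, e.g.\ the Steinberg), and ``such groups rarely have only two classes of $p$-elements'' is not an argument. The paper's point is that an abelian Sylow $p$-subgroup in defining characteristic forces $S\cong\PSL(2,p^n)$, and for these groups the BAW conjecture is known, so one reduces to the $p$-solvable normalizer quotient via Theorem~\ref{psolv}. Finally, your handling of the equality case is essentially right in spirit: in every non-cyclic-Sylow branch the paper actually proves the \emph{strict} inequality $l(B_0)>p-1$, so $l(B_0)=p-1$ forces cyclic Sylow, whence Dade gives $l(B_0)=|\bN_G(P):\bC_G(P)|$ and the Frobenius-group characterization follows.
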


\begin{proof}
Recall that $B_0$ is isomorphic to the principal $p$-block of
$G/\pcore_{p'}(G)$. We may assume that $\pcore_{p'}(G)=1$. Moreover, as the theorem is easy for
$p=2$, we assume $p\ge 3$. Also, since the case of cyclic Sylow
follows from the blockwise Alperin weight conjecture, as explained
in Section~\ref{section-Alperin}, we assume furthermore that
$P\in\Syl_p(G)$ is not cyclic. We aim to prove that $l(B_0)> p-1$ or
$p=11$ and $G\cong C_{11}^2\rtimes\SL(2,5)$.

Assume first that $P$ is non-abelian. Then $p\leq 5$ by the main
result of \cite{Kulshammer-Navarro-Sambale-Tiep}. When $p=5$, $G$ is
isomorphic to the sporadic simple Thompson group $Th$, and from the
Atlas~\cite{Atl1} we get $l(B_0)=l(B_0(Th))=20> 4$, as desired. Let $p=3$. Then
$S:=\bO^{p'}(G)$ is isomorphic to the Rudvalis group $Ru$, the Janko
group $J_4$, the Tits group $\ta F_4(2)'$, or the Ree groups $\ta
F_4(q)$ with $q = 2^{6b\pm1}$ for $b\in\ZZ^{+}$, by
\cite{Kulshammer-Navarro-Sambale-Tiep} again.
Since $\bC_G(S)\le\pcore_{p'}(G)=1$, $G$ is almost simple.
We now check with \cite{GAP48} that
\[l(B_0(Ru))=l(B_0(J_4))=l(B_0(\ta F_4(2)'))=l(B_0(\ta F_4(2)))=9>2.\]
Therefore we may assume that $S=\ta F_4(q)$ with $q = 2^{6b\pm1}$
for some $b\in\ZZ^+$ and $S\unlhd G \leq \Aut(S)$. By \cite[\S 6 and
\S 7]{Malle90} (see also \cite[Table C5]{Himstedt11}), the principal
3-block of $\ta F_4(q)$ ($q\geq 8$) contains three irreducible
Brauer characters (denoted by $\phi_{21}$, $\phi_{5,1}$, and of
course the trivial character) that are $\Aut(S)$-invariant (since
their degrees are unique in $B_0(S)$), and thus we have $l(B_0)\geq
3$, as wanted.

We may now assume that $P$ is abelian. By Burnside's fusion
argument, all non-trivial $p$-elements of $\bN_G(P)$ are conjugate,
i.\,e. $\bN_G(P)$ satisfies the hypothesis of Theorem~\ref{psolv}. Let
$N$ be a minimal normal subgroup of $G$. If $N$ is elementary
abelian, then $N=P$ since every element of $P$ is conjugate to some
element of $N$. From $\pcore_{p'}(G)=1$ it then follows that $B_0$
is the only block of $G$. Hence, the theorem follows from
Theorem~\ref{psolv}. Now let $N=T_1\times\ldots\times T_n$ with
non-abelian simple groups $T_1\cong\ldots\cong T_n$. Since
$\pcore_{p'}(G)=1$, $|T_i|$ is divisible by $p$. Since non-trivial
$p$-elements of the form $(x,1,\ldots,1)$ and $(x,x,1,\ldots,1)$ in
$N$ cannot be conjugate in $G$, we conclude that $n=1$, i.\,e. $N$
is simple. Since $\bC_G(N)\cap N=\bZ(N)=1$ we have
$\bC_G(N)\le\pcore_{p'}(G)=1$. Altogether, $G\le\Aut(N)$, i.\,e. $G$
is an almost simple group. Moreover, $p\nmid |G/N|$.

Let $N=\Al_n$ be an alternating group. Recall that the Sylow $p$-subgroups
of $G$ (and $N$) are not cyclic. Therefore, $n\geq 2p$. But
then the $p$-elements of cycle type $(p)$ and $(p,p)$ are not
conjugate in $G$. The sporadic and the Tits groups can be checked
with \cite{GAP48} (or one appeals to Alperin's weight conjecture proved in
\cite{SambaleBroue}). Next let $N$ be a simple group of Lie type in
characteristic $p$. Then $P$ can only be abelian if
$N\cong\PSL(2,p^n)$ for some $n\ge 1$ (see \cite[Proposition
5.1]{Sawabe-Watanabe} for instance). In this case, Alperin's weight
conjecture is known to hold for $B_0$, i.\,e. $l(B_0)=l(b_0)$ where
$b_0$ is the principal block of $\bN_G(P)$. Now $l(b_0)$ is the
number of $p$-regular conjugacy classes of the $p$-solvable group
$H:=\bN_G(P)/\pcore_{p'}(\bN_G(P))$. Hence, the claim follows from
Theorem~\ref{psolv} unless possibly $p=11$ and
$H=C_{11}^2\rtimes\SL(2,5)$. Then however $N\cong\PSL(2,11^2)$ and
$\SL(2,5)$ is not involved in $\bN_G(P)$.

Finally, let $N$ be a simple group of Lie type in characteristic
different from $p$. In such case, we show in
Theorem~\ref{thm:B0-Lie-type} below that $l(B_0)\ge p$, and thus the
proof is complete.
\end{proof}


\section{Principal blocks of almost simple groups of Lie type}\label{sec:B0-almost-simple-gps}

We now prove the following result which is left off at the end of
Section~\ref{sec:reduction}.

\begin{theorem}\label{thm:B0-Lie-type}
Let $p\geq 3$ be a prime and $S\neq \ta F_4(2)'$ a simple group of
Lie type in characteristic different from $p$. Assume that the Sylow
$p$-subgroups of $S$ are abelian but not cyclic. Let $S\unlhd G \leq
\Aut(S)$ such that $p\nmid |G/S|$. Let $B_0$ be the principal
$p$-block of $G$. Then $l(B_0)\ge p$ or $G$ has at least two classes
of non-trivial $p$-elements.
\end{theorem}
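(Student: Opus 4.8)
The plan is to leverage the contrapositive: assume $G$ has a unique non-trivial class of $p$-elements (so $k_p(G)=2$) and show $l(B_0)\ge p$. Since the Sylow $p$-subgroup $P$ of $S$ is abelian and non-cyclic, the first step is to record the structure of $P$ and of $\bN_G(P)$. Because all non-trivial $p$-elements of $G$ are conjugate and $P$ is abelian, Burnside's fusion argument forces $\bN_G(P)$ to act transitively on $P\setminus\{1\}$; hence $\bN_G(P)/\bC_G(P)$ is a transitive linear group on $P\cong\FF_\ell^m$ for a prime $\ell\ne p$, and in particular $P$ is elementary abelian of order $p^m$ with $m\ge 2$. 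The key reduction is then $e\mid |\bN_G(P):P\bC_G(P)|$ being large: transitivity on $P\setminus\{1\}$ gives $|\bN_G(P):\bC_G(P)|$ divisible by $p^m-1$, so the ``$e$'' governing the generic blocks is at least $(p^m-1)/m \ge p-1$ — but we need strictly more, or rather we need to convert this count into Brauer characters of $B_0$.

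The main tool should be the theory of $e$-cuspidal pairs and generic blocks (Broué–Malle–Michel, Cabanes–Enguehall): for $S$ of Lie type in characteristic $\ne p$ with abelian Sylow $p$-subgroups, the principal $p$-block $B_0$ is the unique block in the Lusztig series labelled by the trivial semisimple class, its defect group is $P$, and by the known validity of the (inductive) blockwise Alperin weight conjecture in this case — or more directly by the results of Cabanes–Enguehard and the fact that abelian defect implies $l(B_0)=l(b_0)$ (Alperin's Consequence 2, established here for Lie type groups via Broué's abelian defect conjecture being known for unipotent blocks, or via Koshitani–type results) — we get $l(B_0)=l(b_0)=k(\bN_G(P)/P\bC_G(P))$. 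So it suffices to show that a transitive linear group $H\le\GL_m(p)$ acting transitively on $\FF_p^m\setminus\{0\}$ and \emph{occurring as} $\bN_G(P)/P\bC_G(P)$ for such an $S$ and $G$ satisfies $k(H)\ge p$. Here Theorem~\ref{psolv} (applied to $P\rtimes H$, which has $k_p=2$) already gives $k(H)=k_{p'}(P\rtimes H)\ge p-1$ with the only borderline cases being $H\cong C_{p-1}$ (forcing $m=1$, excluded since $P$ is non-cyclic) and the exceptional $p=11$, $H=\SL(2,5)$ case. Thus the work is to rule out, for $S$ of Lie type with non-cyclic abelian Sylow $p$-subgroup, the possibility that $k(\bN_G(P)/P\bC_G(P))=p-1$ exactly; since $m\ge 2$ this can only be the $p=11$, $P\cong C_{11}^2$, $H=\SL(2,5)$ scenario, and one checks that no almost simple group of Lie type has $\bN_G(P)/P\bC_G(P)\cong\SL(2,5)$ with an order-$11^2$ abelian Sylow (the relative Weyl group $\bN_G(P)/\bC_G(P)$ for Lie type groups is a reflection-type group whose structure is far too constrained — it is a subgroup of a Weyl group or a complex reflection group of small rank — to be $\SL(2,5)$).

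Concretely the steps are: (1) set up $P$ elementary abelian, $\bN_G(P)$ transitive on $P\setminus\{1\}$; (2) invoke that $B_0$ has abelian defect group $P$ and $l(B_0)=l(b_0)=k(\bN_G(P)/P\bC_G(P))$ — citing the cases of BAW/Broué's conjecture already verified for Lie type groups with abelian Sylow, or directly the classification of $B_0$ via $e$-Harish-Chandra theory; (3) apply Theorem~\ref{psolv} to $P\rtimes(\bN_G(P)/P\bC_G(P))$ to get $l(B_0)\ge p-1$ with equality only in the listed exceptional configurations; (4) use $m=\dim P\ge 2$ to discard the Frobenius case, and a short case analysis (the relative Weyl group in Lie type is never $\SL(2,5)$ acting on $\FF_{11}^2$; alternatively run through the finitely many $S$ with Sylow $11$-subgroup of order $121$ and abelian — essentially $\PSL(2,11^2)$-type and a handful of others, none giving $\SL(2,5)$) to discard the $p=11$ exception, concluding $l(B_0)\ge p$.

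I expect the genuine obstacle to be step~(2): carefully justifying $l(B_0)=k(\bN_G(P)/P\bC_G(P))$ uniformly across all simple groups of Lie type in non-defining characteristic with abelian non-cyclic Sylow $p$-subgroups. This requires either the known cases of the blockwise Alperin weight conjecture for these groups (available through work on unipotent blocks and $e$-Harish-Chandra series, since the principal block's Brauer characters are controlled by the principal series) or an appeal to the abelian defect group conjecture verified in this generality; bundling the exceptional Suzuki/Ree and small-rank cases into the same framework, and handling the excluded $\ta F_4(2)'$ separately (which is why it is excluded in the statement), is where most of the care goes. Everything after that is the soft group theory of transitive linear groups already packaged in Theorem~\ref{psolv}.
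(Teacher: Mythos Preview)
Your step~(2) is the fatal gap. You propose to obtain $l(B_0)=l(b_0)=k(\bN_G(P)/P\bC_G(P))$ by invoking either the blockwise Alperin weight conjecture or Brou\'{e}'s abelian defect group conjecture for Lie type groups in non-defining characteristic with abelian Sylow $p$-subgroups. Neither is known in the required generality. Indeed, Section~\ref{section-Alperin} of the paper explicitly records that BAW would immediately imply Theorem~\ref{main-l(Bo)>p-1}, and that only the cyclic defect case (Dade) is actually available; the entire purpose of Theorem~\ref{thm:B0-Lie-type} is to handle the non-cyclic abelian case \emph{without} assuming BAW. Your argument as written is therefore circular: it reduces the theorem to the very conjecture the theorem is meant to substitute for. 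Your own closing paragraph flags this as the ``genuine obstacle'', but the suggestion that it is ``available through work on unipotent blocks and $e$-Harish-Chandra series'' is not correct---that work does not yield $l(B_0)=l(b_0)$.

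The paper's proof follows two completely different lines. For classical types (Proposition~\ref{prop:classical}) it never computes $l(B_0)$ at all: instead it exhibits, by elementary matrix constructions, two non-conjugate non-trivial $p$-elements in $G$ (essentially $\diag(x_0,I)$ versus $\diag(x_0,x_0,I)$, using $n\ge 2e$), so the second alternative of the theorem holds outright, with one small exception ($\PSL^\epsilon(3,q)$ at $p=3$) handled by explicit decomposition-matrix information. For exceptional types (Proposition~\ref{prop:exceptional}) the paper does compute $l(B_0(S))$, but via Geck's theorem that the unipotent characters in $B_0$ restrict to a basic set of Brauer characters---this gives $l(B_0(S))=k(W(\mathcal{L}_d))$ unconditionally, with no appeal to BAW or Brou\'{e}. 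It then plays this count, together with its behaviour under extension to $G$, against an upper bound on the number of $G$-classes of $p$-elements coming from the relative Weyl group $W(\mathcal{T}_d)$ of a maximal torus containing $P$, type by type. The equality $l(B_0)=k(\bN_G(P)/P\bC_G(P))$ you wish to assume is never used, and for classical groups never even relevant.
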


We will work with the following setup. Let $\GC$ be a simple
algebraic group of simply connected type defined over $\FF_q$ and
$F$ a Frobenius endomorphism on $\GC$ such that and
$S=\GG/\bZ(\GG)$, where $\GG:=\GC^{F}$ is the set of fixed points of
$\mathcal{G}$ under $F$. Let $\GC^\ast$ be an algebraic group with a
Frobenius endomorphism which, for simplicity, we denote by the same
$F$, such that $(\GC,F)$ is in duality to $(\GC^\ast,F)$. Set
$\GG^\ast:={\GC^\ast}^F$. As we will see below, the Brauer
characters in the principal blocks of $S$ and $\GG$ arise from the
so-called unipotent characters of $\GG$. These are the irreducible
characters of $\GG$ occurring in a Deligne--Lusztig character
$R^\GC_\mathcal{T}(1)$, where $\mathcal{T}$ runs over the $F$-stable
maximal tori of $\GC$, see \cite[Definition 13.19]{Digne-Michel91}.
It is well-known that the unipotent characters of $\GG$ all have
$\bZ(\GG)$ in their kernel, and so they are viewed as (unipotent)
characters of $S$.

From the assumption on $P\in\Syl_p(S)$ and $p$, we may assume that
$S$ is not one of the types $A_1$, $\ta G_2$, and $\ta B_2$. Assume
for a moment that $S$ is also not a Ree group of type $\ta F_4$
neither, so that $F$ defines an $\FF_q$-rational structure on $\GC$.
Let $d$ be the multiplicative order of $q$ modulo $p$.

By \cite[Theorem A]{kessar-Malle2}, which includes earlier results
of Brou\'{e}--Malle--Michel \cite{Broue-Malle-Michel} and of
Cabanes--Enguehard \cite{Cabanes-Enguehard94}, the $p$-blocks of
$\GG$ are parameterized by $d$-cuspidal pairs
$(\mathcal{L},\lambda)$ of a $d$-split Levi subgroup $\mathcal{L}$
of $\GC$ and a $d$-cuspidal unipotent character $\lambda$ of
$\mathcal{L}^F$. In particular, the principal block of $\GG$
corresponds to the pair consisting of the centralizer
$\mathcal{L}_d:=\bC_{\GC}(\mathcal{S}_d)$ of a Sylow $d$-torus
$\mathcal{S}_d$ of $\GC$ and the trivial character of
$\mathcal{L}_d^F$. Moreover, the number of unipotent characters in
$B_0(\GG)$ is the same as the number of characters in the
$d$-Harish-Chandra series associated to the pair
$(\mathcal{L}_d,1)$. By \cite[Theorem 3.2]{Broue-Malle-Michel},
characters in each $d$-Harish-Chandra series are in one-to-one
correspondence with the irreducible characters of the relative Weyl
group of the $d$-cuspidal pair defining the series. Therefore, the
number of unipotent characters in $B_0(\GG)$ is precisely the number
of irreducible characters of the relative Weyl group
$W(\mathcal{L}_d)$ of $\mathcal{L}_d$.

Assume that $p\nmid |\bZ(\GG)|$. Then, as the Sylow $p$-subgroups of
$S$ are abelian, those of $\GG$ are abelian as well. In such
situation, we follow \cite[\S 5.3]{Malle-Maroti} to control the
number of conjugacy classes of $p$-elements in $\GG$. In particular,
by \cite[Proposition 2.2]{Malle14}, we know that the order $d$ of
$q$ modulo $p$ defined above is a unique positive integer such that
$p\mid \Phi_d(q)$ with $\Phi_d$ the $d$th cyclotomic polynomial
dividing the generic order of $\GG$. Furthermore, $p$ is indeed a
good prime for $\GC$ (see \cite[Lemma 2.1]{Malle14}). Let
$\Phi_d^{m_d}$ be the precise power of $\Phi_d$ dividing the generic
order of $\GG$. Note that, by the assumption on Sylow $p$-subgroups
of $S$ and the main result of
\cite{Kulshammer-Navarro-Sambale-Tiep}, a $P\in\Syl_p(S)$ must be
elementary abelian, and thus $\Phi_d(q)$ is divisible by $p$ but not
$p^2$. Therefore, $P$ is isomorphic to the direct product of $m_d$
copies of $C_{p}$. Since $P$ is non-cyclic, $m_d>1$.

It is well-known that fusion of semisimple elements in a maximal
torus is controlled by its relative Weyl group (see \cite[Exercise
20.12]{malletesterman} or \cite[p. 6]{Malle-Maroti}). By choosing
$P$ to be inside the Sylow $d$-torus $\mathcal{S}_d$ and let
$\mathcal{T}_d$ be an $F$-stable maximal torus of $\GC$ containing
$\mathcal{S}_d$, we deduce that the fusion of $p$-elements in $P$ is
controlled by the relative Weyl group $W(\mathcal{T}_d)$ of
$\mathcal{T}_d$. Therefore, the number of conjugacy classes of
(non-trivial) $p$-elements of $\GG$, and hence of $S$, is at least
\[
\frac{|P|-1}{|W(\mathcal{T}_d)|}=\frac{p^{m_d}-1}{|W(\mathcal{T}_d)|}.
\]

Note that when $d$ is regular for $\GC$, which means that
$\bC_{\GC}(\mathcal{S}_d)$ is a maximal torus of $\GC$, the maximal
torus $\mathcal{T}_d$ can be chosen to be the same as
$\mathcal{L}_d=\bC_{\GC}(\mathcal{S}_d)$, and this indeed happens
for all exceptional types and all $d$, except the single case of
type $E_7$ and $d=4$ (see also \cite[p. 18]{Hung-ShaefferFry}).

Recall that $p\nmid |\bZ(\GG)|$, and thus $B_0(\GG)$ and $B_0(S)$
are isomorphic, and, moreover, $p$ is a good prime for $\GC$. By a
result of Geck~\cite[Theorem A]{Geck93}, the restrictions of
unipotent characters of $\GG$ in $B_0(\GG)$ to $p$-regular elements
form a basic set of Brauer characters of $B_0(\GG)$. In particular,
$l(B_0(S))=l(B_0(\GG))$ is precisely the number of unipotent
(ordinary) characters in $B_0(\GG)$, which in turns is the number
$k(W(\mathcal{L}_d))$ of irreducible characters of
$W(\mathcal{L}_d)$, as mentioned above.

\begin{proposition}\label{prop:exceptional}
Theorem \ref{thm:B0-Lie-type} holds for groups of exceptional Lie
types.
\end{proposition}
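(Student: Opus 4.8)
The plan is to go type by type through the exceptional groups $G_2, \tw{3}D_4, F_4, E_6, \tw{2}E_6, E_7, E_8$, together with the Ree groups $\tw{2}F_4(q)$, and in each case compare the two quantities in the dichotomy: the number $k(W(\mathcal{L}_d))$ of irreducible characters of the relative Weyl group (which equals $l(B_0)$ by the Geck basic set result, once we know $p\nmid|\bZ(\GG)|$ — but for exceptional types $\bZ(\GG)$ is small and we can dispose of the finitely many bad primes separately) and the lower bound $(p^{m_d}-1)/|W(\mathcal{T}_d)|$ for the number of classes of nontrivial $p$-elements. Since Sylow $p$-subgroups are assumed abelian noncyclic we have $m_d\ge 2$, and for each exceptional type only finitely many values of $d$ satisfy $m_d\ge 2$; these are tabulated in the generic-order/relative-Weyl-group data (e.g.\ in \cite{Broue-Malle-Michel}, \cite{malletesterman}). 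So for each type this reduces to a finite check over the relevant $d$'s.

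The key steps, in order: (1) For each exceptional type and each admissible $d$ with $m_d\ge 2$, read off $\mathcal{L}_d=\bC_{\GC}(\mathcal{S}_d)$ and its relative Weyl group $W(\mathcal{L}_d)$ from the standard tables, and record $k(W(\mathcal{L}_d))$; when $\mathcal{L}_d$ is itself a torus (which is all cases except $E_7$, $d=4$) we have $W(\mathcal{L}_d)=W(\mathcal{T}_d)$, so the same group appears in both quantities. (2) Observe that for these $d$ the group $W(\mathcal{L}_d)$ is typically a complex reflection group whose degrees are known; in particular when $W(\mathcal{L}_d)$ has a reflection of order $\ge 2$ on a space of dimension $m_d\ge 2$ one gets $k(W(\mathcal{L}_d))$ reasonably large. (3) Split into two regimes: if $p$ is ``large'' relative to the type, the bound $(p^{m_d}-1)/|W|$ already exceeds $1$, so $G$ has at least two classes of nontrivial $p$-elements and we are done regardless of $l(B_0)$; if $p$ is ``small'' (only finitely many primes per type, since $|W|$ is bounded and $m_d\le$ rank), then we need $k(W(\mathcal{L}_d))\ge p$, which is a finite verification — and here one uses that $p\mid\Phi_d(q)$ forces $p\ge$ something like $d+1$ (indeed $p\equiv 1\pmod d$ when $p\nmid d$, giving $p\ge 2d+1$ unless $p\mid d$), so the relevant small primes are genuinely few. (4) Handle the Ree groups $\tw{2}F_4(q)$ separately by the analogous recipe using the Ree-group analogue of the $d$-Harish-Chandra theory (the ``$\Phi_d$'' are replaced by factors of the generic order over $\ZZ[\sqrt{2}]$, as in \cite{Malle90}), or simply by citing the block distribution and Brauer character data for $\tw{2}F_4(q)$ directly. (5) Assemble: in every admissible $(S,p,d)$ either $(p^{m_d}-1)/|W(\mathcal{T}_d)|\ge 2$ or $k(W(\mathcal{L}_d))\ge p$.

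The one genuinely delicate point, and the main obstacle, is the borderline cases where $p$ is small — typically $p\in\{3,5,7\}$ — and $m_d=2$: here the crude bound $(p^{m_d}-1)/|W|$ can be less than $1$ (because $|W(\mathcal{T}_d)|$ can be as large as $p^2-1$ or more, e.g.\ for $d=1$ or $d=2$ one has $W(\mathcal{T}_d)$ equal to a parabolic-type subgroup of the Weyl group of order divisible by $p^2-1$), so we cannot conclude from the class count and instead must verify $k(W(\mathcal{L}_d))\ge p$ by hand. For $p=3$ this means checking $k(W(\mathcal{L}_d))\ge 3$, which is automatic as soon as $W(\mathcal{L}_d)$ is a nontrivial group acting nontrivially on a $2$-dimensional space (it then has at least the trivial, sign/determinant, and one more character); for $p=5,7$ one needs the precise character counts of the small complex reflection groups that occur (cyclotomic Weyl groups $G(e,1,m_d)$, $G(e,e,m_d)$, or exceptional ones like $G_5, G_8, G_{25}, G_{26}$, $G_{32}$ appearing for $E_6, E_7, E_8$ at $d=3,4,6$), all of which are in the tables of Broué--Malle--Rouquier. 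I expect essentially every such case to go through comfortably, with possibly one or two needing an explicit note; these, together with the finitely many bad-characteristic cases where $p\mid|\bZ(\GG)|$ (disposed of by direct inspection with \cite{GAP48}), complete the proof of Proposition~\ref{prop:exceptional}.
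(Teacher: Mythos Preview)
Your plan has the right skeleton --- compare $k(W(\mathcal{L}_d))$ against $(p^{m_d}-1)/|W(\mathcal{T}_d)|$ type by type --- but there is a genuine gap: throughout you conflate $S$ with $G$. Geck's basic-set result gives $k(W(\mathcal{L}_d)) = l(B_0(S))$, not $l(B_0(G))$; and $(p^{m_d}-1)/|W(\mathcal{T}_d)|$ lower-bounds the number of nontrivial $p$-classes of $S$ (equivalently $\GG$), not of $G$. Since $G/S$ can be a nontrivial $p'$-group, $S$-classes of $p$-elements may fuse in $G$. Concretely, take $S=G_2(q)$ and $p=7$ with $7\mid(q-1)$ but $49\nmid(q-1)$: here $W(\mathcal{L}_1)\cong D_{12}$ has $k(D_{12})=6<7$, so your small-$p$ branch fails; your large-$p$ branch gives $(49-1)/12=4$ nontrivial $7$-classes in $S$, but $|\Out(S)|=f$ can exceed $4$ (e.g.\ $q=2^{15}$), so nothing in your argument prevents those four $S$-classes from fusing to a single $G$-class. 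Your summary assertion ``in every admissible $(S,p,d)$ either $(p^{m_d}-1)/|W(\mathcal{T}_d)|\ge 2$ or $k(W(\mathcal{L}_d))\ge p$'' is a statement about $(S,p,d)$ alone and does not settle the theorem for $G$.

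The paper closes this gap by introducing the index $x:=|(G\cap\GG_1)/H|$, where $\GG_1$ is the extension of $\GG^\ast$ by field automorphisms and $H=\langle G\cap\GG^\ast,\bC_{G\cap\GG_1}(P)\rangle$, and proving two coupled inequalities. First, since unipotent characters of $S$ are $\GG_1$-invariant and extend (Lusztig--Malle), each $\theta\in\IBr(B_0(S))$ extends to $G\cap\GG_1$, and tensoring with the $x$ linear characters of the cyclic quotient $(G\cap\GG_1)/H$ gives $l(B_0(G\cap\GG_1))\ge k(W(\mathcal{L}_d))\cdot x$. Second, if $G$ has a single nontrivial $p$-class, then the at least $(p^{m_d}-1)/|W(\mathcal{L}_d)|$ classes of $S$ must all fuse under $G/\langle S,\bC_G(P)\rangle$, forcing $p^{m_d}-1\le |W(\mathcal{L}_d)|\cdot dg\cdot x$ (with $d,g$ the diagonal and graph contributions). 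Combining these, one obtains roughly $l(B_0)^{m_d}\ge k(W(\mathcal{L}_d))^{m_d}x>|W(\mathcal{L}_d)|x\ge p^{m_d}-1$, after which the type-by-type check (including the adjustments for graph-automorphism fusion in $G_2$ and $F_4$, and the non-regular case $(E_7,d=4)$) goes through. The missing idea in your sketch is precisely this interplay: the same outer automorphisms that could fuse $p$-classes also multiply the supply of Brauer characters, and only by playing these two effects off against each other does the bound transfer from $S$ to $G$.
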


\begin{proof} We will keep the notation above. In particular, $\GG$ and
$\GG^\ast$ are finite reductive groups of respectively
simply-connected and adjoint type with $S=\GG/\bZ(\GG)\cong
[\GG^\ast,\GG^\ast]$. First we note that the Sylow $3$-subgroups of
simple groups of type $E_6$ or $\ta E_6$ are not abelian since their
Weyl group ($\SO(5,3)$) has a non-abelian Sylow $3$-subgroup. So we
have $p\nmid |\bZ(\GG)|$ in all cases.

We will follow the following strategy to prove the theorem for
exceptional types. Let $\GG_1$ be the extension of $\GG^\ast$ to
include field automorphisms. Let
$$H:=\langle G\cap \GG^\ast,\bC_{G\cap\GG_1}(P)\rangle.$$
Note that every unipotent character of
$S$ is $\GG_1$-invariant and extendible to its inertial subgroup in
$\Aut(S)$, by results of Lusztig and Malle (see \cite[Theorems 2.4
and 2.5]{Malle08}). In particular, every unipotent character in
$B_0(S)$ extends to a character in $B_0(\GG_1)$. The result of Geck
noted above then implies that each $\theta\in\IBr(B_0(S))$ extends
to some $\mu\in\IBr(B_0(G\cap \GG_1))$. 
Now $\mu_H\in\IBr(B_0(H))$. Moreover, as
$P\bC_{G\cap\GG_1}(P)\subseteq H$, $B_0(G\cap\GG_1)$ is the only
block of $G\cap \GG_1$ covering $B_0(H)$ (see \cite[Lemma
1.3]{Rizo}). It follows that $\mu \eta \in\IBR(B_0(G\cap\GG_1))$ for
every $\eta\in\IBR((G\cap \GG_1)/H)$ by \cite[Corollary 8.20 and
Theorem 9.2]{Navarro}, and thus
\begin{equation}\label{eq1}
l(B_0(G\cap\GG_1))\geq l(B_0(S))|(G\cap
\GG_1)/H|=k(W(\mathcal{L}_d))|(G\cap \GG_1)/H|.\end{equation}
Here
we remark that $(G\cap \GG_1)/H$ is a quotient of $(G\cap
\GG_1)/(G\cap \GG^\ast)$ and thus cyclic. Also, the number
$l(B_0(G))$ could be smaller than $l(B_0(G\cap\GG_1))$, depending on
how unipotent characters of $S$ are fused under graph automorphisms,
and this will be examined below in a case by case analysis.

Assume for now that $d$ is regular for $\GC$ (which means
$(\GC,d)\neq (E_7,4)$), we then choose
$\mathcal{T}_d:=\mathcal{L}_d$ as mentioned above. Recall that
$|P|=p^{m_d}$ and $S$ then has at least
$(p^{m_d}-1)/|W(\mathcal{L}_d)|$ conjugacy classes of non-trivial
$p$-elements. Assume that $G$ has a unique class of non-trivial
$p$-elements, and therefore we aim to prove that $l(B_0(G))\ge p$.
Since $\bC_G(P)$ fixes every class of $p$-elements of $S$, we deduce
that
\begin{equation}\label{eq2}
\frac{p^{m_d}-1}{|W(\mathcal{L}_d)|}\leq \frac{|G|}{|\langle
S,\bC_G(P)\rangle|}\leq d\frac{|G|}{|H|}\leq
dg\frac{|G\cap\GG_1|}{|H|},
\end{equation}
where $d$ and $g$ are respectively the orders of the groups of
diagonal and graph automorphisms of $S$.

We now go through various types of $S$ to reach the conclusion, with
the help of \eqref{eq1} and \eqref{eq2}. For simplicity, set
$x:=|(G\cap \GG_1)/H|$. The relative Weyl groups $W(\mathcal{L}_d)$
for various types of $\GC$ and $d$ are available in
\cite[Table~3]{Broue-Malle-Michel}. These relative Weyl groups are
always complex reflection groups and we will follow their notation
in \cite{Broue-Malle-Michel} as well as \cite{Benard76}. Recall that
as the Sylow $p$-subgroups of $S$ are non-cyclic, we may exclude the
types $\ta B_2$ and $\ta G_2$.

Let $S=G_2(q)$ with $q>2$. Then $d\in\{1,2\}$, $m_1=m_2=2$, and
$W(\mathcal{L}_d)$ is the dihedral group $D_{12}$. Here all
unipotent characters of $S$ are $\Aut(S)$-invariant unless $q=3^f$
for some odd $f$, in which case the graph automorphism fuses two
certain unipotent characters in the principal series, by a result of
Lusztig (see \cite[Theorem 2.5]{Malle08}). In any case, the bound
\eqref{eq1} yields $l(B_0(G))\geq (k(D_{12})-1)x=5x$. Together with
\eqref{eq2}, we have
\[
l(B_0(G))\geq5x>\sqrt{24x}\geq\sqrt{p^2-1}> p-1,
\]
as desired.

For $S=F_4(q)$ we have $d\in\{1,2,3,4,6\}$ with $m_1=m_2=4$ and
$m_3=m_4=m_6=2$. Here all unipotent characters of $S$ are
$\Aut(S)$-invariant unless $q=2^f$ for some odd $f$, in which case
the graph automorphism fuses eight pairs of certain unipotent
characters. Also, $W(\mathcal{L}_{1,2})=G_{28}$,
$W(\mathcal{L}_{3,6})=G_{5}$, and $W(\mathcal{L}_{4})=G_{8}$. In all
cases we have
\[l(B_0)\geq
(k(W(\mathcal{L}_d))-8)x>(2|W(\mathcal{L}_d)|x)^{1/m_d}\geq
(p^{m_d}-1)^{1/m_d}> p-1.\]

For all other exceptional types every unipotent character of $S$ is
$\Aut(S)$-invariant, again by \cite[Theorem 2.5]{Malle08}. The bound
\eqref{eq1} then implies that $l(B_0(G)\geq k(W(\mathcal{L}_d))x$.
On the other hand, the bound \eqref{eq2} yields
$dgx|W(\mathcal{L}_d)|\geq p^{m_d}-1$. The routine estimates are
then indeed sufficient to achieve the desired bound.

As the arguments for $\tb D_4$, $E_6$, $\ta E_6$, $E_7$ with $d\neq
4$, and $E_8$ are fairly similar, we provide details only for
$S=E_8(q)$ as an example. Then $d\in\{1,2,3,4,6,5,8,10,12\}$ with
$m_{1,2}=8$, $m_{3,4,6}=4$, and $m_{5,8,10,12}=2$. Going through
various values of $d$, we observe that
$k(W(\mathcal{L}_d))^{m_d}>|W(\mathcal{L}_d)|$ for all relevant $d$.
The above estimates then imply that
\[
l(B_0(G))^{m_d}\geq
k(W(\mathcal{L}_d))^{m_d}x>|W(\mathcal{L}_d)|x\geq p^{m_d}-1,
\]
which in turns implies that $l(B_0(G))\ge p$.

Assume that $S=E_7(q)$ and $d=4$. (Recall that $d=4$ is not regular
for type $E_7$.) Then $m_d=2$. By \cite[Table
1]{Broue-Malle-Michel}, $\mathcal{L}_d=\mathcal{S}_d.A_1^3$,
$W(\mathcal{L}_d)=G_8$ and $W(\mathcal{T}_d)$ is an extension of
$G_8$ by $C_2^3$ for any maximal torus $\mathcal{T}_d$ containing
$\mathcal{S}_d$. Note that here $\Out(S)$ is the direct product of
$C_{\gcd(2,q-1)}$ and $C_{f}$ where $q=\ell^f$ for some prime
$\ell$, and thus is abelian. Let $y:=|G/\langle S,\bC_G(P)\rangle|$
and arguing similarly as above, we have $l(B_0(G))\geq
k(W(\mathcal{L}_d))y=16y$ and $p^2-1\leq y|W(\mathcal{T}_d)|=768y$.
If $y\geq 3$ then $l(B_0(G))^2\geq 16^2y^2\geq 768y\geq p^2-1$, as
desired. If $y=1$ then $p\leq 23$, and since we are done if $p\leq
16$, we may assume that $p=17, 19$, or $23$, but for these primes,
$p^2-1$ does not divide $|W(\mathcal{T}_d)|=768$, implying that $S$,
and hence $G$, has more than one class of $p$-elements. Lastly, if
$y=2$ then the only prime we need to take care of is $p=37$, but as
$37^2-1=1368$ cannot be a sum of two divisors of
$|W(\mathcal{T}_d)|$, $S$ now has at least three classes of
$p$-elements, implying that $G$ has more than one class of
$p$-elements, as desired.

Finally, let $S=\ta F_4(q)$ with $q=2^{2n+1}\geq 8$. Here the prime
$p$ divides exactly one of $\Phi_1(q)$, $\Phi_2(q)$,
$\Phi_{4^+}(q)=q+\sqrt{2q}+1$, and $\Phi_{4^-}(q)=q-\sqrt{2q}+1$,
and $m_d=2$ in all cases. All the Sylow $d$-tori are maximal and
their relative Weyl groups are $D_{16}$ for $d=1$, $G_{12}$ for
$d=2$, and $G_8$ for $d=4^\pm$. Now one just applies \eqref{eq1} and
\eqref{eq2} to arrive at the desired bound.
\end{proof}

\begin{proposition}\label{prop:classical}
Theorem \ref{thm:B0-Lie-type} holds for groups of classical types.
\end{proposition}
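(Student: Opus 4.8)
The plan is to run, for the classical groups, the machinery used for the exceptional types in Proposition~\ref{prop:exceptional}, now with the relative Weyl groups $W(\mathcal{L}_d)$ and $W(\mathcal{T}_d)$ taken to be their classical incarnations, namely the imprimitive complex reflection groups $G(e,1,m_d)$, $G(e,e,m_d)$ and $G(e,2,m_d)$ with $e\mid 2d$. First I would dispose of the case $p\mid|\bZ(\GG)|$. For $p\geq 3$ and $S$ classical with abelian non-cyclic Sylow $p$-subgroups this forces $p=3$ together with $S=\PSL_3(q)$, $3\parallel q-1$, or $S=\PSU_3(q)$, $3\parallel q+1$ — in every other configuration $p\mid|\bZ(\GG)|$ makes some $\Syl_p(\Sy_m)$ with $m\geq p$ act nontrivially on a maximal torus, forcing $P$ to be nonabelian. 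These two infinite families are handled by hand: there $k_p(S)=2$, while $l(B_0(S))\geq 3=p$ is easily checked (e.g.\ from the number of $p$-regular classes and defect-zero characters, or from the known decomposition matrices of $\PSL_3(q)$ and $\PSU_3(q)$), with \cite{GAP48} covering the smallest groups such as $\PSL_3(4)$ and $\PSU_3(5)$. From now on $p\nmid|\bZ(\GG)|$, so exactly as in Section~\ref{sec:B0-almost-simple-gps}: $p$ is good for $\GC$; $P\cong C_p^{m_d}$ with $m_d\geq 2$; $B_0(\GG)\cong B_0(S)$; $l(B_0(S))=k(W(\mathcal{L}_d))$ by Geck's theorem; and $S$, hence $G$, has at least $(p^{m_d}-1)/|W(\mathcal{T}_d)|$ classes of nontrivial $p$-elements for every $F$-stable maximal torus $\mathcal{T}_d\supseteq\mathcal{S}_d$. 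Assuming $G$ has a \emph{unique} class of nontrivial $p$-elements (otherwise the theorem already holds), I aim to prove $l(B_0(G))\geq p$.

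Next I would assemble the case-division over the types $A_{n-1}$, $\ta A_{n-1}$, $B_n$, $C_n$, $D_n$, $\ta D_n$ and the integers $d$ with $m_d\geq 2$. The structure of Sylow $\Phi_d$-tori and $d$-split Levi subgroups of classical groups is classical and can be read off from \cite[Table~1]{Broue-Malle-Michel}: $W(\mathcal{L}_d)$ is an imprimitive reflection group of reflection rank $m_d$ of one of the shapes above, with $|W(\mathcal{L}_d)|\leq(2d)^{m_d}(m_d+1)!$; and a maximal torus $\mathcal{T}_d\supseteq\mathcal{S}_d$ can be chosen so that the residual classical factor of $\mathcal{L}_d$ (a $\GL$, $\Sp$, $\SO$ or $\GU$ of rank $r<d$) sits in $\mathcal{T}_d$ through a Coxeter torus, whence $|W(\mathcal{T}_d)|\leq 2d\,|W(\mathcal{L}_d)|$, with $W(\mathcal{T}_d)=W(\mathcal{L}_d)$ whenever $d$ is regular for $\GC$. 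I would also record, per type, which unipotent characters in $B_0(S)$ are fused by a graph automorphism — for $D_n$ these are the ``degenerate'' characters, numbering at most roughly $p(m_d)$, and for $\ta D_n$ and $A_{n-1}$ similarly few — so that, exactly as in the $G_2$ and $F_4$ cases of Proposition~\ref{prop:exceptional}, the extension bound \eqref{eq1} becomes $l(B_0(G))\geq(k(W(\mathcal{L}_d))-f_d)\,x$ for an explicit small $f_d$, while \eqref{eq2}, in the form used there for $E_7$ with $d=4$, bounds $x$ from below in terms of $p^{m_d}$, $|W(\mathcal{T}_d)|$ and $|\Out(S)|_{p'}$.

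For the endgame one combines \eqref{eq1} and \eqref{eq2} by the very same inequality chains as for the exceptional types. The crucial point is that $k(W(\mathcal{L}_d))$ — already at least the number $p(m_d)$ of partitions of $m_d$, and of order $e^2$ when $e$ is large — dominates $|W(\mathcal{T}_d)|^{1/m_d}$; so, under the standing assumption of a unique class of nontrivial $p$-elements (which through \eqref{eq2} bounds $p$, and hence $q$, in terms of $m_d$ and $d$), one gets $l(B_0(G))^{m_d}\geq p^{m_d}-1$, i.e.\ $l(B_0(G))\geq p$, for every triple $(e,m_d,d)$ outside a short explicit list. The remaining triples all have $m_d=2$ with $d$ small; for these $p$ and $q$ are bounded, and one settles each case directly, verifying either that $l(B_0(G))\geq p$ or that $p^{m_d}-1$ is too large to be covered by $|W(\mathcal{T}_d)|$ — so that $S$, and hence $G$, already has at least two classes of $p$-elements — with \cite{GAP48} for the smallest groups.

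The obstacles I expect are: (i) fixing a uniformly usable description of $W(\mathcal{T}_d)$, that is, of the fusion of $p$-elements inside $P$, in the non-regular cases where $\mathcal{L}_d$ carries a genuine semisimple part and $\mathcal{T}_d$ is not determined; (ii) controlling the graph automorphism for types $D_n$, $\ta D_n$ and $A_{n-1}$, where $f_d>0$ unipotent characters of $B_0(S)$ get identified in pairs and one must check that enough $\Aut(S)$-stable Brauer characters survive in $B_0(G)$; and (iii) the small-$m_d$ edge cases, where the partition estimate is tight and exact character counts of $G(e,1,m_d)$ and its reflection subgroups, together with the precise arithmetic of $|W(\mathcal{T}_d)|$, are needed. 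Of these, (ii) and (iii) are where the real work lies.
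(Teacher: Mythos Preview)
Your plan is coherent and mirrors the exceptional-type argument, but the paper takes a much shorter and quite different route for the classical groups. Rather than comparing $k(W(\mathcal{L}_d))$ against $|W(\mathcal{T}_d)|$, the paper shows that, once $p\nmid|\bZ(\GG)|$, a classical $S$ with abelian non-cyclic Sylow $p$-subgroup \emph{always} has at least two $\Aut(S)$-classes of nontrivial $p$-elements, so that the second alternative of Theorem~\ref{thm:B0-Lie-type} holds outright and no bound on $l(B_0)$ is ever needed. The construction is elementary linear algebra: with $e$ the relevant multiplicative order of $q$ modulo $p$, one fixes a single $p$-element $x_0$ inside the natural rank-$e$ subgroup ($\SL^\epsilon(e,q)$, $\Sp(2e,q)$, or $\SO^\lambda(2e,q)$) and observes that the block-diagonal elements $\diag(x_0,I)$ and $\diag(x_0,x_0,I)$ have different eigenvalue-multiplicity patterns, hence cannot be conjugate under any automorphism of $S$. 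Only the residual case $p\mid|\bZ(\GG)|$ survives, and as you say this forces $S=\PSL^\epsilon(3,q)$ with $p=3$; there the paper checks $l(B_0(G))\ge 3$ from the known decomposition matrix, essentially as you propose.

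Your route could probably be pushed through, but the obstacles you flag are real and the payoff is only uniformity with the exceptional argument. In particular the orbit bound $(p^{m_d}-1)/|W(\mathcal{T}_d)|$ is far from sharp, and there are infinite families where your two inequalities fail simultaneously. Take $S=\PSL_4(q)$ with $p=5\mid q-1$, $q$ prime, and $G$ containing the graph automorphism: here $m_d=3$, $|W(\mathcal{T}_1)|=24$, and $p^3-1=124\le 192=24\cdot 4\cdot 2$, so your count does not exclude a unique $p$-class; yet the graph automorphism leaves only three orbits on the five unipotent Brauer characters, and your lower bound for $l(B_0(G))$ drops to $3<5=p$. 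To close such cases you would have to produce a second $p$-class by some further argument---which is precisely what the paper's block-diagonal construction does from the outset, in two lines per family.
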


\begin{proof}
First consider $S=\PSL^\epsilon(n,q)$ with $\epsilon=\pm$ and $n\geq
3$. Here, as usual, $\PSL^+(n,q):=\PSL(n,q)$ and
$\PSL^-(n,q):=\PSU(n,q)$. Let $e$ be the smallest positive integer
such that $p\mid (q^e-\epsilon^e)$.

Assume that $p\nmid |\bZ(\SL^\epsilon(n,q))|$, and thus we may view
$P$ as a (Sylow) $p$-subgroup of $\SL^\epsilon(n,q)$. Since $P$ is
not cyclic, we have $2e\leq n$. (If $2e>n$ then $P$ would be
contained in a torus of order $q^e-\epsilon^e$, and hence cyclic.)
Let $\alpha$ be an element of $\overline{\FF}^\times_q$ of order
$p$. We then can find an element $x_0\in \SL^\epsilon(e,q)$ of order
$p$ that is conjugate to $\diag(\alpha,\alpha^{\epsilon
q},\ldots,\alpha^{(\epsilon q)^{e-1}})$ over $\overline{\FF}_q$. Now
we observe that the two elements $x:=\diag(x_0,I_{n-e})$ and
$y:=\diag(x_0,x_0,I_{n-2e})$ of $\SL^\epsilon(n,q)$ produce two
corresponding elements of order $p$ in $S$ that cannot be conjugate
in $G$, as desired.

Now assume $p\mid |\bZ(\SL^\epsilon(n,q))|$. As $P\in\Syl_p(S)$ is
abelian, this happens only when $p=3$ (see \cite[Lemma
2.8]{KoshitanikB3}). The proof of \cite[Lemma
2.5]{Kulshammer-Navarro-Sambale-Tiep} shows that, in this case,
$S=\PSL^\epsilon(3,q)$ with $3\mid (q-\epsilon)$ but $9\nmid
(q-\epsilon)$. Moreover, $q=\ell^f$ for some prime $\ell$ with $3
\nmid f$, so the Sylow $3$-subgroups of $S$ (and $G$) are elementary
abelian of order 9. Suppose that $l(B_0(G)) \leq 2$. Then the
irreducible Brauer characters in $B_0(G)$ are $1_G$, and possibly
another character $\gamma$. On the other hand, it is known from
\cite[Theorem 4.5]{Geck90} and \cite[Table 1]{Kunugi00} that
$B_0(S)$ then contains precisely 5 distinct irreducible $3$-Brauer
characters, two of which, $1_S$ and $\alpha$, are linear
combinations of the restrictions of the two unipotent characters of
degrees $1$ and $q^2-\epsilon q$ to $3$-regular elements, and thus
are $G$-invariant; and three more $\beta_1, \beta_2,\beta_3$. It
follows that $\gamma$ lies above $\alpha$, but then none of the
Brauer characters of $B_0(G)$ can lie above $\beta_i$, a
contradiction. Hence $l(B_0(G)) \geq 3$, as required. (In fact,
Brou\'{e}'s abelian defect group conjecture, and hence the blockwise
Alperin weight conjecture, holds for principal $3$-blocks with
elementary abelian defect groups of order 9, see
\cite{Koshitani-Kunugi}, and thus the bound $l(B_0(G))\geq 3$ also
follows by Section~\ref{section-Alperin}.)

For symplectic and orthogonal types, note that as $p$ is odd, we may
view $P\in\Syl_p(S)$ as a Sylow $p$-subgroup of $\Sp$, $\SO$, and
$\GO$. Let $e$ be the smallest positive integer such that $p\mid
(q^{2e}-1)$. As above we have $2e\leq n$ by the non-cyclicity of
$P$.

Consider $S=\PSp(2n,q)$ with $n\geq 2$. Since
$\SL(2,q^e)<\Sp(2e,q)$, we may find an element $x_0$ in $\Sp(2e,q)$
of order $p$ with spectrum
$\{\alpha,\alpha^{q},\ldots,\alpha^{q^{e-1}},\alpha^{-1},\alpha^{q},\ldots,\alpha^{-q^{e-1}}\}$
(see the proof of \cite[Proposition 2.6]{Navarro-Tiep}). Note that
\[ \Sp(2e,q)\times \Sp(2e,q)\times \Sp(2n-4e,q)<\Sp(2e,q)\times
\Sp(2n-2e,q)<\Sp(2n,q).
\] Now one sees that the images of $x:=\diag(x_0,I_{2n-2e})$ and
$y:=\diag(x_0,x_0,I_{2n-4e})$ in $S$ are not conjugate in $G$.

Consider $S=\Omega(2n+1,q)$ with $q$ odd and $n\geq 3$. Since $p\mid
(q^{2e}-1)$, there is a (unique) $\lambda\in\{\pm1\}$ such that
$p\mid (q^e-\lambda)$. Using the embedding
\[C_{q^e-\lambda}\cong \SO^\lambda(2,q^e)<\GO^\lambda(2e,q),\] we may find $x_0\in
\GO^\lambda(2e,q)$ of order $p$ and with the spectrum
$\{\alpha^{\pm1},\alpha^{\pm q},\ldots,\alpha^{\pm q^{e-1}}\}$. This
$x_0$ then must be inside $\SO^\lambda(2e,q)$ since it has order
$p$. Note that
\[
\SO^\lambda(2e,q)\times \SO^\lambda(2e,q)\times
\SO(2n-4e+1,q)<\SO(2n+1,q).
\]
It follows that the images of $x:=\diag(x_0,I_{2n-2e+1})$ and
$y:=\diag(x_0,x_0,I_{2n-4e+1})$ in $S$ are of order $p$, and are
not conjugate in $G$.

For $S=\mathrm{P}\Omega^+(2n,q)$ with $n\geq 4$, using the same
element $x_0\in\SO^\lambda(2e,q)$ as in the case of odd-dimensional
orthogonal groups and the embedding
\[
\SO^\lambda(2e,q)\times \SO^\lambda(2e,q)\times
\SO^+(2n-4e,q)<\SO^+(2n,q),
\]
we arrive at the same conclusion.

Finally, consider $S=\mathrm{P}\Omega^-(2n,q)$ with $n\geq 4$. If
$n=2e$, then we have $p\mid (q^n-1)$ and it follows that the Sylow
$p$-subgroups of $S$ are in fact cyclic, which is not the case. So
$n\geq 2e+1$. As in the case of split orthogonal groups, but using
the embedding
\[
\SO^\lambda(2e,q)\times \SO^\lambda(2e,q)\times
\SO^-(2n-4e,q)<\SO^-(2n,q),
\]
we have that $G$ has at least two classes of non-trivial $p$-elements
as well. This finishes the proof.
\end{proof}

We have completed the proof of Theorem~\ref{thm:B0-Lie-type}, and
therefore the proof of Theorems~\ref{main-l(Bo)>p-1} and
\ref{theorem-kp'} as well.


\section{Groups with three \texorpdfstring{$p$}{p}-classes}\label{sec:kp(G)=3}

In this section we prove the following result, which provides a
bound for $k_{p'}(G)$ for groups $G$ with 3 conjugacy classes of
$p$-elements.

\begin{theorem}\label{main-kp(G)=3}
Let $G$ be a finite group with $k_p(G)=3$. Then $k_{p'}(G)\ge
(p-1)/2$ with equality if and only if $p>2$ and $G$ is the Frobenius
group $C_p\rtimes C_{(p-1)/2}$.
\end{theorem}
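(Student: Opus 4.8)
The plan is to reduce Theorem~\ref{main-kp(G)=3} to the almost simple situation, just as Theorem~\ref{main-l(Bo)>p-1} was reduced in Section~\ref{sec:reduction}, and then use the $p$-solvable case (Theorem~\ref{psolv2}) together with a general lower bound for $k_{p'}$ of almost simple groups that does not assume anything about $k_p$. First I would set up the reduction: as usual one may pass to $G/\pcore_{p'}(G)$, so assume $\pcore_{p'}(G)=1$; since $p\le 5$ is easy (the bound $(p-1)/2\le 2$ being essentially trivial, with the equality analysis as in Theorem~\ref{psolv2}), assume $p\ge 7$. Take a minimal normal subgroup $N$. If $N$ is an elementary abelian $p$-group, then $N=P\in\Syl_p(G)$ as before and $G$ is $p$-solvable, so Theorem~\ref{psolv2} applies directly. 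Otherwise $N=T_1\times\cdots\times T_n$ with non-abelian simple $T_i$ of order divisible by $p$; choosing $1\ne x\in T_1$ of $p$-power order, the elements $x$, $(x,x,1,\ldots,1)$, \ldots, $(x,\ldots,x)$ in $N$ give $\min(n, k_p(T_1)\text{-related count})$ many non-conjugate $p$-classes, and since $k_p(G)=3$ one forces $n\le 2$; the case $n=2$ needs an extra argument bounding $k_{p'}$ when $G$ has a normal subgroup $T\times T$, which can be handled by noting $\bC_G(N)=1$, $G/N\hookrightarrow\Out(T)\wr S_2$, and counting $p$-regular classes of $T$ inside $N$. So the crux is $N$ simple, $G$ almost simple with $p\nmid|G/N|$.

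For the almost simple case the key step is a clean general bound: I would prove that for every almost simple group $G$ with socle $S$ of order divisible by $p$, one has $k_{p'}(G)\ge (p-1)/\gcd$-type quantity, or more precisely $k_{p'}(G)$ is at least roughly $2\sqrt{p-1}$, with exceptions controlled — this is presumably the ``general bound'' advertised in the introduction as the content of Section~\ref{sec:kp(G)=3}. The strategy mirrors the Lie-type analysis of Section~\ref{sec:B0-almost-simple-gps}: for $S$ of Lie type in the defining characteristic $p$, the semisimple classes alone give many $p$-regular classes (via the dual group and Lusztig's Jordan decomposition count $k_{p'}(S)\ge$ number of semisimple classes of $S^\ast$), while for $S$ of Lie type in cross characteristic, or $S$ alternating/sporadic, one uses that $p\mid|S|$ forces the rank or the degree $n$ (for $\Al_n$) to be large enough that $k_{p'}$ grows; the fusion-by-relative-Weyl-group estimate $k_{p'}(G)\ge (|P|-1)/|W|$-style bounds from Section~\ref{sec:B0-almost-simple-gps} transfer almost verbatim since extending to $G\le\Aut(S)$ with $p\nmid|G/S|$ changes class counts only by bounded factors. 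Alternating and sporadic groups are finite checks (or appeal to known data); the Tits group and small Lie-type cases likewise.

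Once the almost simple bound gives $k_{p'}(G)\ge (p-1)/2$ with room to spare (indeed one expects strict inequality, even $k_{p'}\ge 2\sqrt{p-1}$, for almost simple $G$), the theorem follows: in the reduction, if we landed in the $p$-solvable branch we invoke Theorem~\ref{psolv2} for both the bound and the equality classification; if we landed in the almost simple or $n=2$ branch, the bound is strict so equality cannot occur there, consistent with the stated classification $G\cong C_p\rtimes C_{(p-1)/2}$. Finally one chases the equality case: $k_{p'}(G)=(p-1)/2$ forces, via the minimal normal subgroup analysis, the $p$-solvable situation, and then Theorem~\ref{psolv2} pins down $G$ exactly.

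\medskip

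\emph{Main obstacle.} The hard part will be establishing the uniform lower bound $k_{p'}(G)\gtrsim 2\sqrt{p-1}$ (or at least $>(p-1)/2$) for \emph{all} almost simple groups with $p\mid|G|$ and $p\nmid|G/S|$, with no hypothesis on $k_p$ — in particular handling the classical groups of small rank relative to $p$ (where $p$ can be as large as $q^e\pm1$ with $2e\le n$), controlling the effect of diagonal, field and graph automorphisms on $p$-regular class counts, and disposing of the finitely many genuinely exceptional configurations by explicit computation. The defining-characteristic case requires care because then $k_{p'}(S)$ equals the number of semisimple classes, which one must bound below in terms of $p$ using that $p\mid q^e\pm1$ for some small $e$; this interplays with the rank of $S$ and is the technical heart of the section.
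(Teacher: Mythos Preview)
Your overall architecture matches the paper's: reduce to $\pcore_{p'}(G)=1$, take a minimal normal subgroup $N$, use Theorem~\ref{psolv2} in the $p$-solvable branch, and in the non-abelian branch force $n\le 2$ and then invoke a general lower bound for $k_{p'}$ of almost simple groups. However, there are two missing ingredients that the paper actually needs and that your outline does not supply.

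First, the assertion ``if $N$ is an elementary abelian $p$-group then $N=P$ as before'' does not go through by analogy with Section~\ref{sec:reduction}. There the hypothesis $k_p(G)=2$ forces every non-trivial $p$-element into $N$; with $k_p(G)=3$ one only gets $k_p(G/N)\le 2$, and the case $k_p(G/N)=2$ must be eliminated by invoking the already-proved Theorem~\ref{theorem-kp'} (giving $k_{p'}(G)\ge k_{p'}(G/N)\ge p-1>(p-1)/2$). The same device is what forces $p\nmid|G/N|$ and hence $N$ unique, which you need for the almost simple reduction. Second, your treatment of the $n=2$ case is too vague: the paper's argument is that $k_{p'}(G)\ge\frac{1}{2}m(S,p)(m(S,p)+1)$ where $m(S,p)$ is the number of $\Aut(S)$-orbits on $p$-regular classes of $S$, and then one \emph{quotes} the bound $m(S,p)>2\sqrt{p-1}$ (with an explicit finite list of exceptions) from \cite{Hung-Maroti}. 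Without that external input the $n=2$ case is not settled by ``counting $p$-regular classes of $T$ inside $N$''. Two smaller points: the paper separates the almost simple case into cyclic Sylow (handled via Dade/BAW as in Section~\ref{section-Alperin}) and non-cyclic Sylow (Theorem~\ref{almost-simple-kp'>p}, which in fact gives the stronger bound $k_{p'}(G)\ge p$); and your ``main obstacle'' remark that in defining characteristic one uses $p\mid q^e\pm 1$ is confused --- there $q$ is a power of $p$, and the bound comes simply from the $q^r$ semisimple classes of the simply connected cover, which is the easy case rather than the technical heart.
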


We will prove that Theorem~\ref{main-kp(G)=3} follows from
Theorem~\ref{psolv2}, \cite[Theorem 2.1]{Hung-Maroti} on bounding
the number of orbits of $p$-regular classes of simple groups under
their automorphism groups, the known cyclic Sylow case of the
blockwise Alperin weight Conjecture, and the following result.

\begin{theorem}\label{almost-simple-kp'>p}
Let $p$ be a prime and $S$ a finite simple group with non-cyclic
Sylow $p$-subgroups. Let $S\unlhd G \leq \Aut(S)$. Then
$k_{p'}(G)\geq p$.
\end{theorem}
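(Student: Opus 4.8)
The plan is to run through the classification of finite simple groups, in each case producing $p$ non-conjugate $p$-regular classes of $G$ (or, what is often easier, $p$ such classes of $S$ that remain non-fused in $\Aut(S)$, which suffices since $G\le\Aut(S)$). The non-cyclicity of $P\in\Syl_p(S)$ is the key leverage: it forces the $p$-rank of $S$ to be at least $2$, and in each family this pins $S$ into a range where the relevant rank (Lie rank, or $n$ for alternating groups) is large enough that one can exhibit many semisimple or regular-unipotent-free $p$-regular classes. For sporadic groups and the Tits group the statement is a finite check with \cite{GAP48} or the Atlas \cite{Atl1}. For $S=\Al_n$ with non-cyclic Sylow $p$-subgroup we need $n\ge 2p$ (indeed $n\ge p^2$ is false in general, but non-cyclicity already gives $n\ge 2p$), and then the partitions of $n$ into parts coprime to $p$ — equivalently $p$-regular classes of $\Al_n$ — are plentiful and one counts that there are well more than $p$ of them stable under $\Sy_n$; a crude lower bound via partitions of $n-kp$ into $p'$-parts for varying $k$ does the job. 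For simple groups of Lie type in the defining characteristic $p$, non-abelian or non-cyclic Sylow $p$-subgroups are the norm once the rank grows, and $k_{p'}(S)$ is governed by the number of semisimple classes, which is roughly $q^{\mathrm{rank}}$; together with the bound on $\Aut(S)/S$ this easily exceeds $p$ (one can also invoke that $k_{p'}(S)=k_{\mathrm{ss}}(\GG^*)$ is at least $q^r/|\Out(S)|$-ish, and $q\ge 2$, $r\ge 2$).

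The substantive case is $S$ of Lie type in characteristic $\ell\ne p$. Here I would reuse the machinery of Section~\ref{sec:B0-almost-simple-gps} almost verbatim. Let $d$ be the order of $q$ modulo $p$ and let $\Phi_d^{m_d}$ be the exact power of $\Phi_d$ dividing $|\GG|$; non-cyclicity of $P$ forces $m_d\ge 2$ when $p\nmid|\bZ(\GG)|$ (and the sporadic exception $p=3 \mid |\bZ|$, giving $S=\PSL^\epsilon(3,q)$ with elementary abelian Sylow of order $9$, is handled directly as in Proposition~\ref{prop:classical}). Fusion of $p$-elements in $P$ is controlled by the relative Weyl group $W(\TL_d)$ of a maximal torus containing the Sylow $d$-torus, so $S$ — and hence $G$, after absorbing the at-most-$|\Out(S)|$ further fusion — has at least $(p^{m_d}-1)/(|\Out(S)|\cdot|W(\TL_d)|)$ classes of non-trivial $p$-elements. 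Every such class is $p$-regular (a $p$-element is $p$-regular only if trivial — wait: no, $p$-elements are not $p$-regular; instead I would use that the corresponding \emph{semisimple} classes in the dual group are $\ell$-elements, hence $p$-regular, and transport the count through the parametrization of $p$-regular classes by semisimple classes of $\GG^*$, cf.\ \cite[\S5.3]{Malle-Maroti} or \cite[Theorem 2.1]{Hung-Maroti}). Thus it suffices to show $(p^{m_d}-1)/(|\Out(S)|\,|W(\TL_d)|)\ge p$, i.e.\ $p^{m_d}-1\ge p\,|\Out(S)|\,|W(\TL_d)|$. For the classical families, $|W(\TL_d)|$ is a wreath-product type group of size roughly $d^{m_d}m_d!$ and $|\Out(S)|$ is polynomial in $\log q$, while $p^{m_d}$ grows like $(\text{a power of }q)^{m_d}$ since $p\mid\Phi_d(q)$ and $\Phi_d(q)\ge q^{\phi(d)}/2$; so the inequality holds once $q$ or the rank is past a small bound, and the finitely many residual $(S,p)$ are checked by computer. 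For exceptional types the relative Weyl groups $W(\TL_d)$ are taken from \cite[Table~3]{Broue-Malle-Michel}, $|\Out(S)|$ is tiny, and the same estimate — in the same shape as \eqref{eq2} in the proof of Proposition~\ref{prop:exceptional} — closes the case, with the non-regular pair $(E_7,4)$ receiving the separate treatment already spelled out there.

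The main obstacle I anticipate is \emph{not} any one family but the bookkeeping at the boundary: the inequality $p^{m_d}-1\ge p\,|\Out(S)|\,|W(\TL_d)|$ is comfortably true asymptotically but fails for a genuinely finite list of small $(S,p,q,d)$ — small-rank classical groups over small fields, $G_2$, $F_4$, $\ta F_4$, the triality cases — and each of these must be resolved either by a direct count of semisimple classes of $\GG^*$ in $\GG^*$ (not just the orbit bound coming from the torus) or by explicit character-table data in \cite{GAP48}. In particular one has to be careful that non-cyclicity of $P$ is compatible with the small value of $m_d$ one is using; e.g.\ for $\PSL(2,q)$ the Sylow $p$-subgroup is cyclic, so that group never arises, and ruling out such "accidentally cyclic" situations across all ranks (the list of $S$ with abelian but non-cyclic Sylow $p$-subgroups used in Section~\ref{sec:B0-almost-simple-gps}, together with the non-abelian cases) is exactly the casework that has to be done carefully rather than cleverly.
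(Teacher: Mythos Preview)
Your plan through sporadic, alternating, and defining-characteristic Lie type matches the paper's. The non-defining-characteristic Lie case has a genuine gap.

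You propose to recycle Section~\ref{sec:B0-almost-simple-gps}: bound the number of non-trivial $p$-element classes of $G$ from below by $(p^{m_d}-1)/(|\Out(S)|\cdot|W(\TL_d)|)$ and then turn this into a bound on $k_{p'}(G)$. You catch yourself mid-sentence --- $p$-elements are not $p$-regular --- and attempt a rescue by ``transport[ing] the count through the parametrization of $p$-regular classes by semisimple classes of $\GG^*$.'' No such parametrization exists: Lusztig's theory parametrizes \emph{characters} of $\GG$ by semisimple classes of the dual, not conjugacy classes, and semisimple elements of $\GG^*$ are $\ell'$-elements, not ``$\ell$-elements'', and are certainly not all $p$-regular. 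There is simply no mechanism that converts a supply of $p$-elements in $\GG$ into $p$-regular classes anywhere. The inequality $p^{m_d}-1\ge p\,|\Out(S)|\,|W(\TL_d)|$ would bound $k_p(G)-1$, which says nothing about $k_{p'}(G)$; and \cite[Theorem~2.1]{Hung-Maroti} only gives $m(S,p)>2\sqrt{p-1}$, far short of $p$.

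The paper's argument differs in exactly the way that matters. Instead of the Sylow $d$-torus (which is full of $p$-elements), it selects maximal tori $T$ of the adjoint group $\GG$ with $|T|$ \emph{coprime to $p$}: for $\PGL^\epsilon(n,q)$ one of the two tori of order $(q^i-\epsilon^i)/(q-\epsilon)$, $i\in\{n-1,n\}$; for symplectic and odd-orthogonal groups a torus of order $q^n\pm 1$; for even-orthogonal and exceptional types tori of order $\Phi_m(q)$ with $m$ chosen so that $p\nmid\Phi_m(q)$. Every element of such a $T$ is automatically $p$-regular, so fusion control yields at least $(|T|-1)/|W(T)|$ non-trivial $p$-regular classes of $\GG$, and the averaging Lemma~\ref{lemma1} converts $k_{p'}(\GG)\ge p\,|\Out(S)|$ into $k_{p'}(G)\ge p$. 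Your instinct that an estimate shaped like \eqref{eq2} should work is right in form, but the torus must be chosen to have $p'$-order, not to contain $P$; that single switch is the missing idea.
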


\begin{proof} The theorem is clear when $p=2,3$ as $|G|$ has at least 3 prime
divisors. Therefore we may assume that $p\geq 5$. We also may assume
that $S$ is not a sporadic simple group or the Tits group, as these
could be checked directly using the character table library in
\cite{GAP48}.

Let $S=\Al_n$. Since the Sylow $p$-subgroups of $S$ are not cyclic,
we have $n\geq 2p\geq 10$. It follows that $\Al_n$ has at least
$p-1$ cycles of odd length not divisible by $p$. These cycles
together with an involution of $S$ produce at least $p$ $p$-regular
classes of $G$, as desired.

Next we assume that $S$ is a simple group of Lie type in
characteristic $p$. As before, one then can find a simple algebraic
group $\mathcal{G}$ of simply connected type defined in
characteristic $p$ and a Frobenius endomorphism $F$ such that
$S=\GG/\bZ(\GG)$, where $\GG={\mathcal{G}}^{F}$. According to
\cite[Theorem 3.7.6]{Carter}, the number of semisimple classes of
$\GG$ is $q^r$, where $q$ is the size of the underlying field of
$\mathcal{G}$ and $r$ is the rank of $\mathcal{G}$. Therefore,
\[k_{p'}(S)\geq
\frac{k_{p'}(\GG)}{k_{p'}(\bZ(\GG))}\geq \frac{q^r}{|\bZ(\GG)|}.\]
To prove the theorem in this case, it suffices to prove that
$q^r\geq p|\bZ(\GG)||\Out(S)|$. Using the known values of
$|\bZ(\GG)|$ and $|\Out(S)|$ available in \cite[p. xvi]{Atl1} for
instance, it is straightforward to check the inequality for all $S$
and relevant values of $q,r$ and $p$, unless $(S,p)$ is one of the
following pairs
\[
\{(\PSL(2,5^2),5),(\PSL(3,7),7),(\PSL(3,13),13),(\PSU(3,5),5),(\PSU(3,11),11)\}.
\]
Again the character tables of the corresponding almost simple groups
are available in \cite{GAP48} unless $S=\PSL(3,13)$. For this
exception we used the computer to find $13$ distinct pairs
$(|\langle x\rangle|,|\bC_S(x)|)$ where $x\in S$ is $p$-regular. Of
course these elements cannot be conjugate in $G$.

For the rest of the proof, we will assume that $S$ is a simple group
of Lie type in characteristic $\ell\neq p$ and let $\GG$ be a finite
reductive group of adjoint type with socle $S$. (Note that $\GG$
from now on is different from before where it denotes the finite
reductive group of simply-connected type.)

\begin{lemma}\label{lemma1} Let $S, G$ and $\GG$ as above. If $k_{p'}(\GG)\geq
p|\Out(S)|$, then $k_{p'}(G)\geq p$.
\end{lemma}

\begin{proof} Let $\Cl_{p'}(S)$ denote the set of $p$-regular classes of $S$ and
$n(H,\Cl_{p'}(S))$ the number of orbits of the action of a group $H$
on $\Cl_{p'}(S)$. Let $G_1:=\langle G\cup \GG\rangle$. Then
\begin{align*}
k_{p'}(G)&\geq
n(G,\Cl_{p'}(S))\geq n(G_1,\Cl_{p'}(S)) =\frac{1}{|G_1|}\Bigl(\sum_{c\in\Cl_{p'}(S)}|\Stab_{G_1}(c)|\Bigr)\\
&\geq
\frac{1}{|G_1|}\Bigl(\sum_{c\in\Cl_{p'}(S)}|\Stab_\GG(c)|\Bigr)=\frac{|\GG|}{|G_1|}n(\GG,\Cl_{p'}(S))\\
&\geq \frac{|\GG|}{|G_1|} \frac{k_{p'}(\GG)}{|\GG/S|}\geq
\frac{k_{p'}(\GG)}{|\Out(S)|}\geq p,
\end{align*}
as claimed.
\end{proof}

Recall that $p\geq 5$. As the Sylow $p$-subgroups of $S$, where $p$
is not the defining characteristic of $S$, are non-cyclic, $S$ is
not one of the types $A_1$, $\ta B_2$ and $\ta G_2$.

\smallskip

1. Let $\GG=\PGL^\epsilon(n,q)$ with $\epsilon=\pm$, $q=\ell^f$ and
$n\geq 3$. Here as usual we use $\epsilon=+$ for linear groups and
$\epsilon=-$ for unitary groups. Consider tori $T_i$
($i\in\{n-1,n\}$) of $\GG$ of size
$(q^{i}-(\epsilon1)^i)/(q-\epsilon1)$. Since
$\gcd(|T_{n-1}|,|T_n|)=1$, there exists $t\in\{n-1,n\}$ such that
$p\nmid |T_t|$. Note that the fusion of semisimple elements in $T_t$
is controlled by the relative Weyl group, say $W_t$, of $T_t$, which
is the cyclic group of order $t$ (see \cite[Proposition
5.5]{Malle-Maroti} and its proof for instance). Therefore, the
number of $p$-regular (semisimple) classes of $\GG$ with
representatives in $T_t$ is at least
\[
\frac{q^{t}-(\epsilon1)^i}{t(q-\epsilon1)}.
\]

Let $k\in\NN$ be the order of $q$ modulo $p$. Since the Sylow
$p$-subgroups of $S$ are not cyclic, we must have $n\geq 2k$. Now
one can check that
\[
\frac{q^{t}-(\epsilon1)^i}{t(q-\epsilon1)}\geq
2f\gcd(n,q-\epsilon1)p=|\Out(S)|p
\]
for all possible values of $q,n$ and $p$. It follows that
\[
k_{p'}(\GG)\geq |\Out(S)|p,
\]
and therefore we are done in this case by Lemma \ref{lemma1}.

\smallskip

2. Let $\GG=\SO(2n+1,q)$ or $\PCSp(2n,q)$ for $n\geq 2$ and
$q=\ell^f$. Since $p$ is odd, it does not divide both $q^n-1$ and
$q^n+1$. Let $T$ be a maximal torus of $G$ of order either $q^n-1$
or $q^n+1$ such that $p\nmid |T|$. The fusion of (semisimple)
elements in $T$ is controlled by its relative Weyl group, which is
cyclic of order $2n$ in this case. Therefore, the number of
conjugacy classes with representatives in $T$ is at least
$1+(q^n-2)/(2n)$, and it follows that
\[
k_{p'}(\GG)\geq 2+\frac{q^n-2}{2n},
\]
since $S$ has at least one non-trivial unipotent class.

Let $k\in\NN$ be minimal such that $p$ divides $q^{2k}-1$. Since the
Sylow $p$-subgroups of $S$ are non-cyclic, we must have $n\geq 2k$.
Let $n=2$. It then follows that $k=1$ and, as $p\geq 5$, we have
$q\geq 9$, and thus the desired inequality $2+(q^2-2)/4\geq
2fp=p|\Out(S)|$ follows easily. So let $n\geq 3$, and hence
$\Out(S)$ is cyclic of order $f\gcd(2,q-1)$. We now easily check
that
\[
2+\frac{q^n-2}{2n}\geq f\gcd(2,q-1)p
\]
for all the relevant values of $p,q$ and $n$, unless
$(n,p,q)=(4,5,2)$, and indeed in all cases we have
\[
k_{p'}(\GG)\geq p|\Out(S)|,
\]
and the theorem follows by Lemma \ref{lemma1}.

\smallskip

3. Let $\GG= \PCO^\epsilon(2n,q)$ with $\epsilon=\pm$, $q = \ell^f$
and $n\geq4$. Here $|\Out(S)|=2f\gcd(4,q^n-\epsilon1)$ unless
$(n,\epsilon)=(4,+)$, in which case
$|\Out(S)|=6f\gcd(4,q^n-\epsilon1)$. Similar to other classical
groups we have $n\geq 2k$ where $k$ is minimal such that $p\mid
(q^{2k}-1)$. First assume that $k\nmid n$. A maximal torus of $\GG$
of size $q^{n}-\epsilon1$ will then produce at least
\[1+\frac{q^{n}-\epsilon1}{2n}\] $p$-regular classes, which are sufficient
for the desired bound of $p|\Out(S)|$ unless
$(n,\epsilon,q,p)=(4,+,4,5)$, $(5,\pm,2,5)$. The bound still holds
for these exceptions since $S$ has elements of at least 5 different
orders coprime to $5$, which makes $k_{p'}(G)\geq 5=p$.

Now we may assume $k\mid n$. The case $\epsilon=-$ and $p\mid
(q^k-1)$ can be treated as above using a torus of size $q^n+1$, with
a note that $\gcd(q^n+1,q^k-1)\mid 2$ and thus every element in that
torus has order coprime to $p$. So we assume that $\epsilon=+$ or
$\epsilon=-$ and $p\mid (q^k+1)$.

Observe that now $\GG$ has tori of size $q^{n-1}\pm 1$ which
consists of $p$-regular elements. Also, the non-trivial conjugacy
classes with representatives in these two tori have only one
possible common class, which is an involution class. Therefore,
\[
k_{p'}(\GG)\geq
2+\frac{q^{n-1}-3}{2(n-1)}+\frac{q^{n-1}-1}{2(n-1)}=2+\frac{q^{n-1}-2}{n-1}:=h(q,n).
\]
It turns out that the desired bound $h(q,n)\geq p|\Out(S)|$ is
satisfied unless $(S,p)=(P\Omega_{12}^-(2),5)$,
$(P\Omega_{12}^+(2),7)$, $(P\Omega_{12}^+(3),13)$,
$(P\Omega_{16}^+(2),17)$, $(P\Omega_{16}^+(3),41)$,
$(P\Omega_{8}^+(4),5)$, or $(P\Omega_{8}^+(q),p)$ with $q\leq 29$
and $p\mid (q^2+1)$ but $p\nmid (q^2-1)$.

For the pairs $(S,p)=(P\Omega_{12}^-(2),5)$,
$(P\Omega_{12}^+(2),7)$, or $(P\Omega_{8}^+(4),5)$, one can confirm
the bound by just counting the prime divisors of $|S|$. For
$(S,p)=(P\Omega_{16}^+(2),17)$, by counting elements of certain
order in the two tori of sizes $2^7\pm 1$, we observe that $\GG=S$
has at least $126/14=9$ classes of $127$-elements, $42/14=3$ classes
of $43$-elements, and $84/14=6$ classes of 129-elements, which
implies that $G$ has at least 10 classes of $\{127,43,129\}$-elements,
and hence, by also including classes of elements of order 1,2, 3, 5,
7, 9, 13, we have the claimed bound. The same strategy also works
for $(P\Omega_{12}^+(3),13)$ and $(P\Omega_{16}^+(3),41)$.

We are now left with the case $(S,p)=(P\Omega_{8}^+(q),p)$ with
$q\leq 29$ and $p\mid (q^2+1)$ but $p\nmid (q^2-1)$. Using the lower
bound for the number of $\Aut(S)$-orbits on $p$-regular classes of
$S$ in the proof of \cite[Lemma 4.6]{Hung-Maroti}, we end up with
the open cases
\[(q,p)\in\{(4,17),(5,13),(8,13),(9,41),(11,61)\}.\]
These groups can be realized as permutation groups (of degree $21{,}435{,}888$ in the last case) in \cite{GAP48}. In each case we constructed enough random $p$-regular elements in $S$ and computed their centralizer orders to make sure that these elements are not conjugate in $G$.

\smallskip

4. Now we turn to the case where $S$ is of exceptional type
different from $\ta B_2$ and $\ta G_2$. To conveniently write the
order $|S|$ and its factors, we use $\Phi_d$ to denote the $d$th
cyclotomic polynomial over the rational numbers.

As above let $\GG$ be a finite reductive group (over a field of size
$q$) of adjoint type with socle $S$, and assume that
$|\GG|=q^N\prod_i \Phi_i(q)^{a(i)}$ for suitable positive integers
$a(i)$ and $N$. (Indeed, $N$ is the number of positive roots in the
root system corresponding to $S$.) First we assume that the Sylow
$p$-subgroups of $\GG$ are not abelian. Then $p$ must divide the
order of the Weyl group of $\GG$, and thus $\GG$ is one of the types
$E_6$, $\ta E_6$, $E_7$, and $E_8$ and $p\leq 7$. Using elementary
number theory, one now easily observes that $|S|$ has at least $8$
different prime divisors, and hence $k_{p'}(G)\geq 7\geq p$. So we
may and will assume that the Sylow $p$-subgroups of $\GG$ (and $S$)
are abelian (but not cyclic). It follows that there exists a unique
$d\in \NN$ such that $p\mid \Phi_d(q)$ and $a(d)\geq 2$, see
\cite[Lemma 25.14]{malletesterman}.

Let $\GG=G_2(q)$ with $q=\ell^f\geq 3$. We then have $p\mid
(q^2-1)$. Note that $\GG=S$ has maximal tori of coprime orders
$\Phi_3(q)$ and $\Phi_6(q)$, which are furthermore coprime to $p$
since $p\geq 5$. The relative Weyl groups of these tori have order 6
(see \cite[Tables 1 and 3]{Broue-Malle-Michel} for sizes of Weyl
groups of various maximal tori). Therefore,
\[
k_{p'}(\GG)\geq
1+\frac{\Phi_3(q)-1}{6}+\frac{\Phi_6(q)-1}{6}=1+\frac{q^2}{3}.
\]
It is now sufficient to check that $1+q^2/3\geq fp$, but this is
straightforward. The case $S=F_4(q)$ is handled similarly by
considering two maximal tori of orders $\Phi_8(q)$ and
$\Phi_{12}(q)$.

Let $S=\ta F_4(q)$ with $q=2^{2m+1}\geq 8$. Then we have $p\mid
\Phi_1(q)\Phi_2(q)\Phi_4(q)$. Using maximal tori of orders
$\Phi_{12}^{\pm}(q):=q^2\pm \sqrt{2q^3}+q\pm \sqrt{2q}+1$ with the
relative Weyl group of order $12$, we end up with
\[
k_{p'}(G)\geq
1+\frac{\Phi_{12}^+(q)-1}{12}+\frac{\Phi_{12}^-(q)-1}{12}=1+\frac{q^2+q}{6}.
\]
Certainly $1+(q^2+q)/6\geq (2m+1)p$ for all possible values of $p$
and $m$ except $(p,m)=(13,1)$, but this exception can be checked
directly using \cite{GAP48}.

Let $S=\tb D_4(q)$. We then have $p\mid
\Phi_1(q)\Phi_2(q)\Phi_3(q)\Phi_6(q)$. The relative Weyl group of a
maximal torus of order $\Phi_{12}(q)$ has order 4, and thus the
number of classes with representatives in this torus is at least
$1+(q^4-q^2)/4$, which in turn is at least $3fp$, as we wanted,
unless $(q,p)=(2,7)$, $(3,13)$, or $(4,13)$. The first exception can
be handled easily using \cite{Atl1}. Let $(S,p)=(\tb D_4(4),13)$. We
already know that $S$ has at least $(4^4-4^2)/4=60$ classes of
elements of order $\Phi_{12}(4)=241$ and thus, as $\Out(S)$ is
cyclic of order 6, we are done unless $G=\Aut(S)$. In fact, even for
$G=\Aut(S)$, one just notices that $G$ has at least $60/6=10$
classes of elements of order 241, and therefore, together with
classes of elements of order 1,2 and 3, the desired bound follows.
Finally let $(S,p)=(\tb D_4(3),13)$. Then $S$ has at least $18$
classes of elements of order $\Phi_{12}(3)=73$, which produces at
least $6$ classes for $G$. Now note that $\SL(2,27)\leq S$, and by
using \cite{Atl1}, we then observe that $\SL(2,27)$, and hence $S$,
has elements of orders 1, 2, 3, 4, 6, 7, 14, which produce 7 more
13-regular classes, as wanted.

For $\GG=E_6(q)_{ad}$, we have $p\mid \Phi_d(q)$ for some
$d\in\{1,2,3,4,6\}$. Consider the (semisimple) classes with
representatives in a maximal torus of size $\Phi_{9}(q)$, with
notice that this torus has the relative Weyl group of order 9, we
obtain
\[
k_{p'}(\GG)\geq 1+\frac{q^6+q^3}{9},
\]
which is certainly at least $|\Out(S)|p$ for every relevant $q$ and
$p$. Similar arguments also work for $\GG=\ta E_6(q)_{ad}$ and
$E_7(q)_{ad}$, but using a maximal torus of respectively size
$\Phi_{18}(q)$ and $\Phi_1(q)\Phi_9(q)$ or $\Phi_2(q)\Phi_9(q)$,
depending on which size is coprime to $p$. For $\GG=E_8(q)$ with
$q=\ell^f$ we have $p\mid \Phi_d(q)$ for some
$d\in\{1,2,3,4,5,6,8,10,12\}$ and $\GG$ has a maximal torus of size
$\Phi_{30}(q)$ with the relative Weyl group of order 30, and it
follows that
\[
k_{p'}(\GG)\geq 1+\frac{\Phi_{30}(q)-1}{30}\geq fp,
\]
as desired. This concludes the proof of
Theorem~\ref{almost-simple-kp'>p}.
\end{proof}


We are now in the position to prove the main result of this section.

\begin{proof}[Proof of Theorem~\ref{main-kp(G)=3}]
Assume that the theorem is false and let $G$ be a minimal
counterexample. In particular, $G$ is not isomorphic to the
Frobenius group $F_p:=C_p\rtimes C_{(p-1)/2}$ and $k_{p'}(G)\leq
(p-1)/2$. Since $k_p(G/\mathbf{O}_{p'}(G))=k_p(G)=3$ and
$k_{p'}(G/\mathbf{O}_{p'}(G))\leq k_{p'}(G)$, we have
$\mathbf{O}_{p'}(G)=1$ or $G/\mathbf{O}_{p'}(G)\cong F_p$. In the
latter case $G$ has a cyclic Sylow $p$-subgroup and hence cannot be
a counterexample, as shown in Section~\ref{section-Alperin}, and
thus we have $\mathbf{O}_{p'}(G)=1$. Let $N$ be a minimal normal
subgroup of $G$. It follows that $p\mid |N|$, and hence $k_p(G/N)<
k_p(G)$. Now since $k_p(G/N)$ cannot be 2 by
Theorem~\ref{main-l(Bo)>p-1}, we must have $p\nmid |G/N|$ and
moreover, $N$ is the unique minimal normal subgroup of $G$.

We are done if $N$ is abelian by Theorem~\ref{psolv2}. So we may
assume that $N$ is a direct product of, say $n$, copies of a
non-abelian simple group, say $S$. Note that $p\mid |S|$ since
$p\mid |N|$. Therefore, the assumption $k_p(G)=3$ implies that
$n\leq 2$.

Assume that $n=2$. Let $m(S,p)$ be the number of $\Aut(S)$-orbits on
$p$-regular classes of $S$. We then have
\[
k_{p'}(G)\geq\frac{1}{2}m(S,p)(m(S,p)+1).
\]
It was shown in \cite[Theorem 2.1]{Hung-Maroti} that either
$m(S,p)>2\sqrt{p-1}$ or $(S,p)$ belongs to a list of possible
exceptions described in \cite[Table 1]{Hung-Maroti}. For the former
case, we have
\[
k_{p'}(G)> \frac{2\sqrt{p-1}(2\sqrt{p-1}+1)}{2}> \frac{p-1}{2},
\]
which is a contradiction. For the latter case, going through the
list of exceptions, we in fact still have
\[\frac{m(S,p)(m(S,p)+1)}{2}>\frac{p-1}{2},\] which again leads to a contradiction.

Finally we may assume that $n=1$, which means that $G$ is an almost
simple group with socle $S$. Furthermore, $p\nmid |G/S|$. The
theorem now follows from Section~\ref{section-Alperin} when Sylow
$p$-subgroups of $S$ are cyclic and from
Theorem~\ref{almost-simple-kp'>p} otherwise. This completes the
proof.
\end{proof}


\section{Theorem \ref{main-ko>p} and further applications}\label{section-applications}

We now derive Theorem \ref{main-ko>p}, which is restated, from
Theorem~\ref{main-l(Bo)>p-1}.

\begin{theorem}
Let $p$ be a prime and $G$ a finite group in which all non-trivial
$p$-elements are conjugate. Let $B_0$ denote the principal $p$-block
of $G$ Then $k_{0}(B_0)\geq p$ or $p = 11$ and $k_{0}(B_0)=10$.
\end{theorem}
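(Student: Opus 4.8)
The plan is to deduce Theorem~\ref{main-ko>p} directly from Theorem~\ref{main-l(Bo)>p-1} together with the structural information already available for groups in which all non-trivial $p$-elements are conjugate. First I would recall that, by \cite[Theorem~3.6]{Kulshammer-Navarro-Sambale-Tiep}, for such a group either the Sylow $p$-subgroup $P$ is (elementary) abelian, or $(G,p)$ lies in a short explicit list of exceptions; in the abelian-defect case Kessar--Malle \cite{Kessar-Malle} gives $k_0(B_0)=k(B_0)$, and Brauer's inequality $k(B_0)\ge l(B_0)$ then reduces everything to the bound $l(B_0)\ge p$ supplied by Theorem~\ref{main-l(Bo)>p-1}(i). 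Since $k(B_0)\ge l(B_0)+1\ge p$ whenever $k(B_0)>l(B_0)$, and $k(B_0)=l(B_0)$ forces $k(B_0)=l(B_0)=1$ (trivial defect, excluded), this handles case (i).

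Next I would dispose of the two remaining branches of Theorem~\ref{main-l(Bo)>p-1}. In case (iii), $G/\bO_{p'}(G)\cong C_{11}^2\rtimes\SL(2,5)$; here the principal block is the only block, the defect group is elementary abelian of order $121$, so $k_0(B_0)=k(B_0)=k(G/\bO_{p'}(G))$, and one computes directly (via the Frobenius structure, or GAP \cite{GAP48}) that this number is $10$, giving the stated exception. In case (ii), $\bN_G(P)/\bO_{p'}(\bN_G(P))\cong C_p\rtimes C_{p-1}$, so in particular $P$ is cyclic of order $p$; then $B_0$ has cyclic defect group and by Dade's theory \cite{Dade66} $k(B_0)=l(B_0)+p-1$, whence $k_0(B_0)=k(B_0)-(\text{number of non-exceptional characters is }l(B_0))$—more carefully, for cyclic defect $p$ all irreducible characters in $B_0$ have height $0$, so $k_0(B_0)=k(B_0)=l(B_0)+(p-1)=(p-1)+(p-1)\ge p$ since $p\ge 3$ (the case $p=2$ being trivial).

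Finally I would treat the non-abelian Sylow exceptions from \cite[Theorem~3.6]{Kulshammer-Navarro-Sambale-Tiep} separately: these are finitely many families ($\ta F_4(q)$ for $p=3$, plus $Ru$, $J_4$, $\ta F_4(2)'$, and $Th$ for $p=5$), and for each one either a direct check with \cite{GAP48}/\cite{Atl1} or the explicit decomposition-matrix information already cited in the proof of Theorem~\ref{main-l(Bo)>p-1repeat} shows $k_0(B_0)\ge l(B_0)\ge p$; in particular for $\ta F_4(q)$ one uses that $B_0$ contains at least three height-zero ordinary unipotent characters of distinct degrees, which is more than $p=3$. The main obstacle I anticipate is not any single hard argument but bookkeeping: one must make sure that in every branch the passage from $l(B_0)$ (or $k(B_0)$) to $k_0(B_0)$ is legitimate, i.e.\ that either the defect group is abelian (so Kessar--Malle applies and $k_0(B_0)=k(B_0)$) or cyclic (so all heights are $0$) or the group is small enough to check directly — and to confirm that the single genuine exception is exactly $p=11$ with $k_0(B_0)=10$ and no others slip through.
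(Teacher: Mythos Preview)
Your overall strategy coincides with the paper's: split into the abelian-Sylow case (where Kessar--Malle gives $k_0(B_0)=k(B_0)$ and Theorem~\ref{main-l(Bo)>p-1} does the rest) and the short list of non-abelian exceptions from \cite{Kulshammer-Navarro-Sambale-Tiep}, which are checked directly.

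Two small corrections are needed. In case~(ii) your Dade formula is wrong: for cyclic defect of order $p$ with inertial index $e=l(B_0)=p-1$ one has $k(B_0)=e+(p-1)/e=p$, not $l(B_0)+(p-1)=2(p-1)$. The conclusion $k_0(B_0)\ge p$ survives (indeed with equality), and in any event this case is already subsumed by your abelian argument, since $P$ cyclic is abelian and $k(B_0)\ge l(B_0)+1=p$. Second, in the non-abelian exceptions the inequality $k_0(B_0)\ge l(B_0)$ you write down is not a general theorem and should not be invoked; the paper does not use it either. Instead one exhibits explicit height-zero characters: for $\ta F_4(q)$ with $q\ge 8$ the trivial character together with $\Aut(S)$-invariant characters of degrees $q$ and $(q-1)(q^2+1)^2(q^4-q^2+1)$, all of $3'$-degree, and for $Ru$, $J_4$, $\ta F_4(2)'$, $\ta F_4(2)$, $Th$ one simply reads off $k_0(B_0)$ from \cite{GAP48}.
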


\begin{proof} The theorem follows from Theorem~\ref{main-l(Bo)>p-1}
and \cite{Kessar-Malle} when the Sylow $p$-subgroups of $G$ are
abelian. Assume otherwise. Then, as mentioned before, by
\cite[Theorem 1.1]{Kulshammer-Navarro-Sambale-Tiep}, either

\begin{enumerate}[\rm(a)]
\item $p = 3$ and $\bO^{p'}(G/\bO_{p'}(G))$ is isomorphic to $Ru$,
$J_4$ or $\ta F_4(q)'$ with $q = 2^{6b\pm1}$ for a nonnegative
integer $b$, or

\item $p = 5$ and $G/\bO_{p'}(G)$ is isomorphic to $Th$.
\end{enumerate}
We now just proceed as in the proof of
Theorem~\ref{main-l(Bo)>p-1repeat}, but with height $0$ characters
instead of Brauer characters. For (b) we have
$k_0(B_0)=k_0(B_0(Th))=20>5$, and we are done. For (a) we may assume
that $G$ is almost simple. As
\[k_0(B_0(Ru))=k_0(B_0(J_4))=k_0(B_0(\ta F_4(2)'))=k_0(B_0(\ta F_4(2)))=9\]
by \cite{GAP48}, we may now assume that $S=\ta F_4(q)'$ with $q =
2^{6b\pm1}$ for some $b\in\ZZ^+$ and $S\unlhd G \leq \Aut(S)$.
According to \cite[\S 6 and \S 7]{Malle90}, the principal 3-block of
$S$ contains the Steinberg character denoted by $\chi_{21}$ (of
degree $q$), the semisimple character denoted by $\chi_{5,1}$ (of
degree $(q-1)(q^2+1)^2(q^4-q^2+1)$), and the trivial character, all
of which are $3'$-degree and $\Aut(S)$-invariant, implying that
$k_0(B_0)\geq 3$. The theorem is fully proved.
\end{proof}

Finally, we provide some more examples of applications of
Theorem~\ref{main-l(Bo)>p-1} in the study of principal blocks with
few characters.

\begin{theorem}\label{thm:app1}
Let $G$ be a finite group with a Sylow $p$-subgroup $P$ and the
principal $p$-block $B_0$. Assume that $k(B_0)=5$ and $l(B_0)=4$.
Then $P\cong C_5$.
\end{theorem}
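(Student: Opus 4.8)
The plan is to confine the prime to $\{2,3,5\}$ via Corollary~\ref{cor-main}, to settle $p=5$ directly with Theorem~\ref{main-l(Bo)>p-1}, and then to eliminate $p=2$ and $p=3$ by analysing the defect group together with the inertial quotient of $B_0$. Since $k(B_0)-l(B_0)=1$, three facts are available simultaneously: a defect group $P$ of $B_0$, which is a Sylow $p$-subgroup of $G$, is elementary abelian by \cite[Theorem~3.6]{Kulshammer-Navarro-Sambale-Tiep}; $k_0(B_0)=k(B_0)=5$ by \cite{Kessar-Malle}; and all non-trivial $p$-elements of $G$ are conjugate, as recalled in the Introduction. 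Hence Corollary~\ref{cor-main}, Theorem~\ref{main-ko>p} and Theorem~\ref{main-l(Bo)>p-1} all apply to $(G,p)$, and Corollary~\ref{cor-main} yields $k(B_0)\ge p$ — the alternative $p=11$, $k(B_0)=10$ being incompatible with $k(B_0)=5$ — so $p\in\{2,3,5\}$.

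Consider $p=5$. Theorem~\ref{main-l(Bo)>p-1} leaves only two possibilities: either $l(B_0)\ge 5$, or $l(B_0)=p-1=4$ together with $\bN_G(P)/\bO_{p'}(\bN_G(P))\cong C_5\rtimes C_4$ (its exceptional case $p=11$ does not arise). Since $l(B_0)=4<5$, the second must hold; as a Sylow $5$-subgroup of the Frobenius group $C_5\rtimes C_4$ is cyclic of order $5$, we conclude that $P\cong C_5$, which is the assertion. This case genuinely occurs, already for $G=C_5\rtimes C_4$.

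It remains to contradict $p\in\{2,3\}$. Here I would use the structure recorded in the proof of Theorem~\ref{thm:k-l=1}: the equation $k(B_0)-l(B_0)=1$ forces the inertial quotient $E$ of $B_0$ to act regularly on $P\setminus\{1\}$, so $|E|=p^d-1$ where $|P|=p^d$, every Sylow subgroup of $E$ is cyclic or quaternion, $P\rtimes E$ is the sharply $2$-transitive group of a near-field of order $p^d$, and $k(P\rtimes E)=k(E)+1$. As there are no exceptional near-fields in characteristic $2$ or $3$, these are Dickson near-fields: for $p=2$ the group $E$ has odd order, hence $k(E)$ is odd (an odd-order group has an odd number of conjugacy classes), and for $p=3$ a short check of the Dickson near-fields of $3$-power order gives $k(E)\in\{2,5,8,26,32,\dots\}$; in either case $k(E)\neq 4$. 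The identity $l(B_0)=k(E)$ is precisely the blockwise Alperin weight conjecture for $B_0$, which holds for cyclic $P$ by Dade's theory and for $p=3$, $|P|=9$ by \cite{Koshitani-Kunugi}; combined with \cite[Corollaries~1.3 and~1.6]{Landrock81} to rule out larger defect groups at $p\in\{2,3\}$ — exactly as these results are used in the Introduction — this forces $l(B_0)\neq 4$, a contradiction. The only genuine obstacle lies in this final step, namely in controlling $l(B_0)$ (equivalently in verifying the weight conjecture for $B_0$) for $p\in\{2,3\}$ when the defect group is large; the rest is bookkeeping with the theorems already established.
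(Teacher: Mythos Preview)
Your reduction to $p\in\{2,3,5\}$ via Corollary~\ref{cor-main} and your treatment of $p=5$ via the equality clause of Theorem~\ref{main-l(Bo)>p-1} are correct and coincide with the paper's argument.

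The problem is the elimination of $p\in\{2,3\}$. You set up the near-field description of the inertial quotient $E$, compute $k(E)$, and then try to transport this to $l(B_0)$ via the blockwise Alperin weight conjecture, which you can only justify for cyclic defect and for $|P|=9$; you then hope Landrock's results bound the defect and finally concede the gap yourself. This detour is not needed, and the gap is self-inflicted. You already have $k_0(B_0)=k(B_0)=5$ from \cite[Theorem~3.6]{Kulshammer-Navarro-Sambale-Tiep} and \cite{Kessar-Malle}. Landrock's results \cite[Corollaries~1.3 and~1.6]{Landrock81} are divisibility statements on $k_0$: for a block of positive defect one has $2\mid k_0(B)$ when $p=2$ and $3\mid k_0(B)$ when $p=3$. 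Since $5$ is coprime to $6$, both $p=2$ and $p=3$ are excluded immediately, with no appeal to BAW, near-fields, or any control of the defect. That is exactly how the paper dispatches these primes in two lines. Your invocation of Landrock ``to rule out larger defect groups'' misreads what those corollaries provide; apply them to $k_0(B_0)$ directly and the argument closes.
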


\begin{proof}
By Theorem \ref{main-l(Bo)>p-1}, we have $p\leq 5$. By \cite[Theorem
3.6]{Kulshammer-Navarro-Sambale-Tiep}, $P$ is (elementary) abelian.
It then follows by \cite{Kessar-Malle} that the ordinary irreducible
characters in $B_0$ all have $p'$-degree, and thus $k_{0}(B_0)=5$.
However, by \cite[Corollaries 1.3 and 1.6]{Landrock81},
$5=k_{p'}(B_0)$ is divisible by $p$ if $p=2$ or 3, which cannot
happen.

So we are left with $p=5$. The equality part of Theorem
\ref{main-l(Bo)>p-1} then implies that
$\bN_G(P)/\mathbf{O}_{p'}(\bN_G(P))$ is isomorphic to the Frobenius
group $C_p\rtimes C_{{p-1}}$. In particular, $P\cong C_5$, as
wanted.
\end{proof}

\begin{theorem}\label{thm:app2}
Let $G$ be a finite group with a Sylow $p$-subgroup $P$ and the
principal $p$-block $B_0$. Assume that $k(B_0)=l(B_0)+1=7$. Then
$P\cong C_7$.
\end{theorem}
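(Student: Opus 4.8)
The plan is to run the argument of Theorem~\ref{thm:app1} one prime further. Since $k(B_0)-l(B_0)=7-6=1$, Corollary~\ref{cor-main} applies and gives $k_0(B_0)=k(B_0)\ge p$ unless $p=11$ and $k(B_0)=10$; as $k(B_0)=7\neq 10$, we conclude $p\le 7$. Moreover, by \cite[Theorem~7.1]{HKKS} (or \cite[Theorem~3.6]{Kulshammer-Navarro-Sambale-Tiep}) the defect group $P$ of $B_0$ is elementary abelian. Just as in the proof of Theorem~\ref{thm:app1}, \cite[Corollaries~1.3 and~1.6]{Landrock81} eliminate $p=2$ (which would force $k(B_0)=4$) and $p=3$ (which would force $3\mid k(B_0)$). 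Hence $p\in\{5,7\}$.

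The case $p=7$ is immediate. Here $l(B_0)=6=p-1$, and since $k(B_0)-l(B_0)=1$ forces $k_p(G)=2$, Theorem~\ref{main-l(Bo)>p-1} applies; its equality clause then gives $\bN_G(P)/\bO_{p'}(\bN_G(P))\cong C_7\rtimes C_6$, whence $P\cong C_7$. So the real work is to rule out $p=5$. First I would note that $P$ cannot be cyclic: a cyclic defect group forces $k(B_0)=|P|$ by Dade's theory \cite{Dade66}, a power of $5$, against $k(B_0)=7$; so $P$ is elementary abelian of order $5^n$ with $n\ge 2$, and by \cite[Theorem~7.1]{HKKS} the inertial quotient $E$ of $B_0$ acts regularly on $P\setminus\{1\}$, so $E$ is a Frobenius complement of order $5^n-1$ with cyclic or quaternion Sylow subgroups (hence trivial Schur multiplier). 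One then checks $k(E)\ge 7$ for every such $E$ with $n\ge 2$: for $n=2$ one has $E\cong C_{24}$ ($k(E)=24$) or $E\cong\SL(2,3)$ ($k(E)=7$), and for $n\ge 3$ already $|E|\ge 124$ is far too large to have only six classes. Now, assuming Brou\'{e}'s abelian defect group conjecture --- equivalently, by the argument in the proof of Theorem~\ref{thm:k-l=1}, the equality $l(B_0)=k(E)$ --- is available for the (small, elementary abelian) defect groups that can occur here, in particular for $C_5\times C_5$ (compare \cite{Koshitani-Kunugi} for the analogous input at $p=3$), we get $l(B_0)=k(E)\ge 7$, contradicting $l(B_0)=6$. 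Equivalently, once the Alperin weight conjecture is secured for $B_0$, Theorem~\ref{thm:k-l=1} itself permits $l(B)=6$ only in case~(i) with defect group $C_7$ --- impossible for $p=5$ --- since for $n\ge 2$ its formula skips the value $6$ (yielding $5,12,\dots$ at $n=2$ and larger values beyond) and part~(ii) never equals $6$.

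I expect the main obstacle to be exactly this $p=5$ step. The structural reductions (Corollary~\ref{cor-main}, \cite{Kulshammer-Navarro-Sambale-Tiep}, \cite{Landrock81}) and the prime $p=7$ cost nothing, but to dispose of $p=5$ one must at the same time confine the admissible defect groups to a short list of elementary abelian groups --- ideally just $C_5\times C_5$ --- for which Brou\'{e}'s (or Alperin's weight) conjecture is known, and verify the routine case-by-case fact that no Frobenius complement of order $5^n-1$ with $n\ge 2$ has exactly six conjugacy classes. Both ingredients should be at hand, but they are where the proof really sits.
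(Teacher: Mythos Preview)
Your reduction to $p\in\{5,7\}$ and the treatment of $p=7$ match the paper exactly. The gap is at $p=5$: you rely on the Alperin weight (equivalently Brou\'{e}) conjecture for principal $5$-blocks with elementary abelian defect group $C_5^n$, $n\ge 2$, and this is \emph{not} known---there is no analogue of \cite{Koshitani-Kunugi} for $p=5$ in the literature. Theorem~\ref{thm:k-l=1} is explicitly conditional on AWC, so you cannot invoke it unconditionally; the paper uses it only where AWC is established (the $p$-solvable and cyclic-Sylow cases) to conclude that $G$ is neither $p$-solvable nor has cyclic Sylow. Your self-assessment that ``both ingredients should be at hand'' is therefore too optimistic: the Frobenius-complement count is routine, but the AWC input genuinely is not available.

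The paper disposes of $p=5$ unconditionally by following the almost-simple reduction in the proof of Theorem~\ref{main-l(Bo)>p-1repeat}: one lands in $S\unlhd G\le\Aut(S)$ with $S$ simple of Lie type in characteristic $\ne 5$, abelian non-cyclic Sylow, and $5\nmid|G/S|$. For classical $S$ the proof of Proposition~\ref{prop:classical} produces two non-conjugate non-trivial $5$-elements, contradicting $k(B_0)-l(B_0)=1$. For exceptional $S$ other than $G_2$ the estimates in Proposition~\ref{prop:exceptional} already give $l(B_0)\ge 7$. For $S=G_2(q)$ with $5\mid q\pm1$ one has $l(B_0(S))=k(D_{12})=6$, but the six unipotent characters are $\Aut(S)$-invariant (as $q$ is not an odd power of $3$) and \cite{Hiss89} supplies two further non-unipotent characters of distinct degrees in $B_0(S)$, forcing $k(B_0(G))\ge 8>7$. (Two minor asides: Dade's theorem gives $k(B_0)=e+(|P|-1)/e$, not $|P|$, though your conclusion survives; and for $|P|=25$ there are three regular complements---$C_{24}$, a metacyclic group with $12$ classes, and $\SL(2,3)$---not two, but again $k(E)\ge 7$ holds.)
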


\begin{proof}
Again by Theorem \ref{main-l(Bo)>p-1}, we have $p\leq 7$ and as
above, the cases $p=2$ or 3 do not occur by \cite[Corollaries 1.3
and 1.6]{Landrock81}. If $p=7$ then the equality part of
Theorem~\ref{main-l(Bo)>p-1} implies that
$\bN_G(P)/\mathbf{O}_{p'}(\bN_G(P))\cong C_7\rtimes C_6$, yielding
that $P\cong C_7$, as claimed.

We now eliminate the possibility $p=5$. Assume so. By
Theorem~\ref{thm:k-l=1}, $G$ is not $p$-solvable and has a
non-cyclic Sylow $p$-subgroup. As in the proof of
Theorem~\ref{main-l(Bo)>p-1repeat}, we may assume that $G$ is an
almost simple group with a socle $S$ of Lie type in characteristic
not equal to $p$ and $p\nmid |G/S|$. Moreover, $P$ is abelian but
non-cylic. The proof of Proposition~\ref{prop:classical} then shows
that, when $S$ is of classical type, $G$ has more than one class of
non-trivial $p$-elements, contradicting the assumption that
$k(B_0)-l(B_0)=1$. Also, the proof of
Proposition~\ref{prop:exceptional} shows that $l(B_0)\geq 7$ when
$S$ is of exceptional types except possibly type $G_2$. (Indeed, the
principal block of $G_2(q)$ has exactly 6 irreducible modular
characters when $p\mid \Phi_{1,2}(q)=q\pm 1$, since
$k(W(\mathcal{L}_{1,2}))=k(D_{12})=6$.) So assume $S=G_2(q)$. Note
that, since $P$ is not cyclic, $5=p\mid (q\pm 1)$ and hence $q$ is
not an odd power of 3, implying that every unipotent character of
$S$ (including 6 in $B_0(S)$) is $\Aut(S)$-invariant, by
\cite[Theorem 2.5]{Malle08}. However, a quick inspection of the
principal block of $G_2(q)$ (see \cite[Theorems A and B]{Hiss89})
reveals that it contains two (families of) non-unipotent characters
of different degrees, implying that $B_0(G)$ contains at least 2
irreducible ordinary characters lying over non-unipotent characters
of $S$. It follows that $k(B_0(G))\geq 6+2=8$. This final
contradiction completes the proof.
\end{proof}

We conclude by noting that, while Theorem~\ref{thm:app1} can also be deduced
from the main result of \cite{Rizo} on principal blocks with exactly
5 irreducible ordinary characters, Theorem~\ref{thm:app2} is new.

\section*{Acknowledgments}

The authors are grateful to Gunter Malle for the careful reading of
an earlier version of the paper and providing helpful suggestions
that led to a clearer exposition. We also thank Attila Mar\'{o}ti for several fruitful
discussions on $d$-cuspidal pairs and their relative Weyl groups,
and $p$-regular classes.


\end{document}